\newcommand{\R}{{\mathbb R}}
\newcommand{\Q}{{\mathbb Q}}
\newcommand{\N}{{\mathbb N}}
\newcommand{\EE}{{\mathbb E}}
\newcommand{\PP}{{\mathbb P}}
\newcommand{\cA}{{\mathcal A}}
\newcommand{\cX}{{\mathcal X}}
\newtheorem{defn}{Definition}
\newtheorem{res}{Lemma}
\newtheorem{thm}{Theorem}
\renewenvironment{proof}{\noindent{\bf Proof:} }{\hfill $\square$ \\}
\begin{document}
\bibliographystyle{plain}
\thispagestyle{empty}
\begin{center}
{\Large\sc Non-parametric indices of dependence between components for
inhomogeneous multivariate random measures and marked sets}\\[.5in] 

\noindent
{\large M.N.M. van Lieshout}\\[.2in]
\noindent
{\em CWI, P.O.~Box 94079, NL-1090 GB  Amsterdam\\
University of Twente, P.O.~Box 217, NL-7500 AE Enschede\\
The Netherlands}\\[.2in]
In memory of J.\ Oosterhoff.
\end{center}

\begin{verse}
{\footnotesize
\noindent
{\bf Abstract}\\
\noindent
We propose new summary statistics to quantify the association between
the components in coverage-reweighted moment stationary multivariate 
random sets and measures. They are defined in terms of the 
coverage-reweighted cumulant densities and extend classic functional 
statistics for stationary random closed sets. We study the relations
between these statistics and evaluate them explicitly for a range of 
models. Unbiased estimators are given for all statistics and applied 
to simulated examples.
\\[0.2in]

\noindent
{\em Keywords \& Phrases:}
compound random measure, coverage measure, coverage-reweighted moment 
stationarity, cross hitting functional, empty space function, germ-grain
model, $J$-function, $K$-function, moment measure, multivariate random 
measure, random field model, reduced cross correlation measure, spherical 
contact distribution.

\noindent
{\em 2010 Mathematics Subject Classification:}
60D05. 
}
\end{verse}

\section{Introduction}

Popular statistics for investigating the dependencies between different 
types of points in a multivariate point process include cross-versions of 
the $K$-function \cite{Ripl88}, the nearest-neighbour distance distribution 
\cite{Digg83} or the $J$-function \cite{LiesBadd96}. Although originally 
proposed under the assumption that the underlying point process distribution 
is invariant under translations, in recent years all statistics mentioned 
have been adapted to an inhomogeneous context. More specifically, 
for univariate point processes, \cite{Badd00} proposed an inhomogeneous 
extension of the $K$-function, whilst \cite{Lies11} did so for the 
nearest-neighbour distance distribution and the $J$-function. An 
inhomogeneous cross $K$-function was proposed in \cite{MollWaag04}, 
cross nearest-neighbour distance distributions and $J$-functions were
introduced in \cite{Lies11} and further studied in \cite{CronLies16}.

Although point processes can be seen as the special class of random 
measures that take integer values, functional summary statistics for
random measures in general do not seem to be well studied. An exception 
is the pioneering paper by Stoyan and Ohser \cite{StoyOhse82} in which,
under the assumption of stationarity, two types of characteristics were
proposed for describing the correlations between the components of a 
multivariate random closed sets in terms of their coverage measures. The 
first one is based on the second order moment measure \cite{DaleVere08}
of the coverage measure \cite{Molc05}, the second one on the capacity 
functional \cite{Math75}. The authors did not pursue any relations 
between their statistics. Our goal in this paper is, in the context
of multivariate random measures, to define generalisations of 
the statistics of \cite{StoyOhse82} that allow for inhomogeneity, 
and to investigate the relations between them. 

The paper is organised as follows. In Section~\ref{S:moments}
we review the theory of multivariate random measures. We recall the 
definition of the Laplace functional and Palm distribution and discuss 
the moment problem. We then present the notion of coverage-reweighted 
moment stationarity. In Section~\ref{S:statistics} we introduce new 
inhomogeneous counterparts to Stoyan and Ohser's reduced cross
correlation measure. In the univariate case, the latter coincides 
with that proposed by Gallego {\em et al.\/} for germ-grain models 
\cite{Gall14}. We go on to propose a cross $J$-statistic and relate
it to the cross hitting intensity \cite{StoyOhse82} and empty space
function \cite{Math75} defined for stationary random closed sets. Next, 
we give explicit expressions for our functional statistics for a range 
of bivariate models: compound random measures including linked and 
balanced models, the coverage measure associated to random closed
sets such as germ-grain models, and random field models with 
particular attention to log-Gaussian and thinning random fields.
Then, in Section~\ref{S:estimation}, we turn to estimators for 
the new statistics and apply them to simulations of the models 
discussed in Section~\ref{S:theo-ex}.

\section{Random measures and their moments}
\label{S:moments}

In this section, we recall the definition of a multivariate random
measure \cite{Chiu13,DaleVere08}.

\begin{defn}
Let $\cX = \R^d \times \{ 1, \dots, n \}$, $d, n\in\N$, be equipped
with the metric $d(\cdot, \cdot)$ defined by 
$d( (x,i), (y,j) ) = || x - y || + | i - j |$
for $x, y \in \R^d$ and $i, j \in \{ 1, \dots, n \}$. Then a 
multivariate random measure $\Psi$ on $\cX$ is a measurable mapping
from a probability space $(\Omega, \cA, \PP)$ into the space of
all locally finite Borel measures on $\cX$ equipped with the smallest 
$\sigma$-algebra that makes all $\Psi_i(B)$ with $B\subset \R^d$ ranging
through the bounded Borel sets and $i$ through $\{ 1, \dots, n \}$
a random variable. 
\end{defn}

An important functional associated with a multivariate random measure 
is its {\em Laplace functional\/}.

\begin{defn}
Let $\Psi = (\Psi_1, \dots, \Psi_n)$ be a multivariate random measure.
Let $u: \R^d \times \{ 1, \ldots, n \} \to \R^+$ be a bounded 
non-negative measurable function such that the projections 
$u(\cdot, i): \R^d \to \R^+$, $i=1, \dots, n$, have bounded support. Then
\[
L(u) = \EE \exp\left[ - \sum_{i=1}^n \int_{\R^d}  u(x,i) \, d\Psi_i(x) \right]
\]
is the Laplace functional of $\Psi$ evaluated at $u$.
\end{defn}

The Laplace functional completely determines the distribution of the 
random measure $\Psi$ \cite[Section~9.4]{DaleVere08} 
and is closely related to the {\em moment measures\/}. 
First, consider the case $k=1$. Then, for Borel sets $B \subseteq \R^d$
and $i\in \{ 1, \dots, n \}$, set
\[
\mu^{(1)}( (B \times \{ i \}) = \EE \Psi_i(B).
\]
Provided the set function $\mu^{(1)}$ is finite for bounded Borel sets, 
it yields a locally finite Borel measure that is also denoted by
$\mu^{(1)}$ and referred to as the {\em first order moment measure\/} of
$\Psi$. More generally, for $k\geq 2$, the $k$-th order moment measure 
is defined by the set function
\[
\mu^{(k)}( (B_1 \times \{ i_1 \}) \times \cdots \times (B_k \times \{ i_k \}) )
 = 
\EE\left( 
 \Psi_{i_1}(B_1) \times \cdots \times \Psi_{i_k}( B_k) 
\right)
\]
where $B_1, \dots, B_k\subseteq \R^d$ are Borel sets and $i_1, \dots, i_k
\in \{ 1, \dots, n \}$. If $\mu^{(k)}$ is finite for bounded $B_i$, it
can be extended uniquely to a locally finite Borel measure on $\cX^k$, cf.\
\cite[Section~9.5]{DaleVere08}.

In the sequel we shall need the following relation between the Laplace
functional and the moment measures. Let $u$ be a bounded non-negative
measurable function $u:\R^d \times \{ 1, \dots, n \} \to \R^+$ such
that its projections have bounded support. Then,
\begin{equation}
L(u)  =  1 + \sum_{k=1}^\infty \frac{(-1)^k}{k!} 
  \sum_{i_1 = 1}^n \int_{\R^d}
\cdots \sum_{i_k=1}^n \int_{\R^d}
u(x_1, i_1) \cdots u(x_k, i_k) \, d\mu^{(k)}((x_1, i_1), \dots, (x_k, i_k) ) 
\label{e:lu}
\end{equation}
provided that the moment measures of all orders exist and that the series on 
the right is absolutely convergent \cite[(6.1.9)]{DaleVere03}.

The above discussion might lead us to expect that the moment measures 
determine the distribution of a random measure. As for a random variable, 
such a claim cannot be made in complete generality. However, Zessin 
\cite{Zess83} derived a sufficient condition.

\begin{thm}
\label{t:zessin}
Let $\Psi = (\Psi_1, \dots, \Psi_n)$ be a multivariate random measure
and assume that the series
\[
\sum_{k=1}^\infty \mu^{(k)}((B\times C)^k)^{-1/(2k)} = \infty
\]
diverges for all bounded Borel sets $B \subset \R^d$ and all 
$C \subseteq \{ 1, \dots, n \}$.
Then the distribution of $\Psi$ is uniquely determined by its moment
measures.
\end{thm}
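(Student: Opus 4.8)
The plan is to show that any two multivariate random measures sharing the same moment measures $\mu^{(k)}$ must have the same law, by reducing everything to the classical Carleman determinacy criterion for a single nonnegative real-valued random variable. The law of $\Psi$ is fixed by its finite-dimensional distributions, that is, by the joint laws of the vectors $(\Psi_{i_1}(B_1), \dots, \Psi_{i_m}(B_m))$ for bounded Borel sets $B_j \subset \R^d$ and indices $i_j \in \{1,\dots,n\}$. Since these coordinates are nonnegative, such a joint law is determined by its Laplace transform $\EE \exp[-\sum_j a_j \Psi_{i_j}(B_j)]$, $a_j \ge 0$, and hence by the one-dimensional laws of all nonnegative linear combinations $S = \sum_{j=1}^m a_j \Psi_{i_j}(B_j)$. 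So it suffices to prove that each such $S$ has a distribution determined by its moments, because the moments of $S$ are themselves read off from the $\mu^{(k)}$ via $\EE[S^k] = \sum_{j_1,\dots,j_k} a_{j_1}\cdots a_{j_k}\, \mu^{(k)}((B_{j_1}\times\{i_{j_1}\}) \times \cdots \times (B_{j_k}\times\{i_{j_k}\}))$.

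The analytic engine is Carleman's theorem: a nonnegative random variable whose moments $m_k$ satisfy $\sum_k m_k^{-1/(2k)} = \infty$ is determined by those moments, i.e.\ the associated Stieltjes moment problem is determinate. I would first note this for the total masses $Z_{B,C} := \Psi(B \times C) = \sum_{i\in C}\Psi_i(B)$, whose $k$-th moment is exactly $\mu^{(k)}((B\times C)^k)$; these satisfy the Carleman condition by hypothesis. To handle a general $S$, fix a bounded $B \supseteq \bigcup_j B_j$, put $C = \{i_1,\dots,i_m\}$ and $\lambda = \sum_j a_j$. Since $\Psi_{i_j}(B_j) \le \Psi_{i_j}(B) \le Z_{B,C}$ for every $j$, one has $0 \le S \le \lambda Z_{B,C}$ pointwise, whence $\EE[S^k] \le \lambda^k \mu^{(k)}((B\times C)^k)$ and therefore $\EE[S^k]^{-1/(2k)} \ge \lambda^{-1/2}\,\mu^{(k)}((B\times C)^k)^{-1/(2k)}$. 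The hypothesised divergence thus transfers from $Z_{B,C}$ to $S$, so $S$ meets the Carleman condition and is determinate.

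Assembling the pieces: if $\Psi$ and $\Psi'$ have identical moment measures, then for each choice of $(a_j, B_j, i_j)$ the variables $S$ and $S'$ carry the same moment sequence and both satisfy the Carleman condition, hence have the same law; letting the $a_j$ vary recovers equality of all joint Laplace transforms, of all finite-dimensional distributions, and finally of the laws of $\Psi$ and $\Psi'$. The only genuinely non-elementary input is Carleman's determinacy theorem, which I would quote from the moment-problem literature. The step I expect to be the real obstacle is the domination argument that converts the condition imposed on the box masses $Z_{B,C}$ into the Carleman condition for \emph{arbitrary} nonnegative linear combinations $S$; once that monotone bound is in place, the remainder is bookkeeping with Laplace transforms and with the expansion of $\EE[S^k]$ in terms of the moment measures.
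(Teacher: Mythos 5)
The paper itself contains no proof of this theorem: it is stated as a quoted result, with the proof deferred to Zessin's paper \cite{Zess83} on the method of moments for random measures. So there is no internal argument to compare against; judged on its own, your proposal is correct and follows what is essentially the classical route for results of this kind. Its three ingredients are all sound: (i) the law of $\Psi$ is determined by the finite-dimensional distributions of the evaluations $\Psi_i(B)$, hence via joint Laplace transforms by the one-dimensional laws of the nonnegative combinations $S=\sum_j a_j\Psi_{i_j}(B_j)$, whose moments are read off from the $\mu^{(k)}$; (ii) Carleman's criterion for the Stieltjes moment problem; (iii) the monotone domination $S\le\lambda\,\Psi(B\times C)$ with $\lambda=\sum_j a_j$, $B\supseteq\bigcup_j B_j$, $C=\{i_1,\dots,i_m\}$, which gives $\EE[S^k]^{-1/(2k)}\ge\lambda^{-1/2}\mu^{(k)}((B\times C)^k)^{-1/(2k)}$ and so transfers the hypothesised divergence to every $S$. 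You correctly identify (iii) as the crux: it is exactly what makes a hypothesis stated only for the box masses $(B\times C)^k$ strong enough to control arbitrary linear combinations. Two small points are worth making explicit, though neither is a gap. First, the hypothesis implicitly forces all $\mu^{(k)}((B\times C)^k)$ to be finite: for the nonnegative variable $Z=\Psi(B\times C)$ the quantities $\EE[Z^k]^{1/k}$ are nondecreasing in $k$, so if one moment were infinite all later ones would be, the corresponding terms of the series would vanish, and the series could not diverge. Second, in the degenerate case $\lambda=0$ one has $S\equiv 0$, so the domination bound is only needed when $\lambda>0$.
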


The existence of the first-order moment measure implies that of a
Palm distribution \cite[Prop.~13.1.IV]{DaleVere08}.

\begin{defn}
\label{d:Palm}
Let $\Psi = (\Psi_1, \dots, \Psi_n)$ be a multivariate random measure
for which $\mu^{(1)}$ exists as a locally finite measure.
Then $\Psi$ admits a Palm distribution $P^{(x,i)}$ which is defined uniquely 
up to a $\mu^{(1)}$-null-set and satisfies
\begin{equation}
\label{e:CM}
\EE\left[ 
\sum_{i=1}^n \int_{\R^d} g((x, i), \Psi) \, d\Psi_i(x) 
\right]
=
\sum_{i=1}^n \int_{\R^d} \EE^{(x,i)} \left[ g((x, i), \Psi) \right]
\, d\mu^{(1)}(x,i)
\end{equation}
for any non-negative measurable function $g$. Here, $\EE^{(x,i)}$ denotes
expectation with respect to $P^{(x,i)}$.
\end{defn}

The equation (\ref{e:CM}) is sometimes referred to as the Campbell--Mecke
formula.

Next, we will focus on random measures whose moment measures are
absolutely continuous. Thus, suppose that
\[
\mu^{(k)}( (B_1 \times \{ i_1 \}) \times \cdots \times (B_k \times \{ i_k \}) )
 = 
\int_{B_1} \cdots \int_{B_k} p_k( (x_1, i_1), \dots, (x_k, i_k) ) \,
dx_1 \cdots dx_k,
\]
or, in other words, that $\mu^{(k)}$ is absolutely continuous with
Radon--Nikodym derivative $p_k$, the $k$-point {\em coverage function\/}.
The family of $p_k$s define cumulant densities as follows \cite{DaleVere08}.

\begin{defn}
Let $\Psi = (\Psi_1, \dots, \Psi_n)$ be a multivariate random measure and
assume that its moment measures exist and are absolutely continuous.
Assume that the coverage function $p_1$ is strictly positive. Then the 
coverage-reweighted cumulant densities $\xi_k$ are defined recursively 
by $\xi_1 \equiv 1$ and, for $k \geq 2$,
\[
\frac{p_{k}((x_1, i_1), \dots, (x_k, i_k))}{p_1(x_1, i_1) \cdots p_1(x_k, i_k)} =
\sum_{m=1}^k \sum_{D_1, \dots, D_m} 
\prod_{j=1}^m \xi_{|D_j|} ( \{ (x_l, i_l) : l \in D_j \} )
\]
where the sum is over all possible partitions $ \{ D_1, \dots, D_m \}$,
$D_j \neq \emptyset$, of $\{ 1, \dots, k \}$.
Here we use the labels $i_1, \dots, i_k$ to define which of the 
components is considered and denote the cardinality of $D_j$ by
$|D_j|$.
\end{defn}

For the special case $k=2$,
\[
\xi_2( (x_1, i_1), (x_2, i_2) ) 
= \frac{ p_2( (x_1, i_1), (x_2, i_2) ) - p_1( x_1, i_1) \, p_1(x_2, i_2)}{
p_1( x_1, i_1) \, p_1(x_2, i_2) }.
\]
Consequently, $\xi_2$ can be interpreted as a coverage-reweighted 
covariance function. 

\begin{defn}
Let $\Psi = (\Psi_1, \dots, \Psi_n)$ be a multivariate 
random measure. Then $\Psi$ is coverage-reweighted moment 
stationary if its coverage function exists and is bounded away
from zero, $\inf p_1( x, i) > 0$, and its
coverage-reweighted cumulant densities $\xi_k$ exist and are 
translation invariant in the sense that
\[
\xi_k( (x_1 + a, i_1), \dots, (x_k + a, i_k) ) = 
\xi_k( ( x_1, i_1), \dots, (x_k, i_k) )
\]
for all $a\in \R^d$, $i_j\in \{1, \dots, n\}$ and almost all $x_j\in \R^d$.
\end{defn}

An application of \cite[Lemma~5.2.VI]{DaleVere03} to (\ref{e:lu})
implies that 
\begin{eqnarray}
\nonumber
\log L(u) & = &  \\
& & \sum_{k=1}^\infty 
\frac{(-1)^k}{k!} 
  \sum_{i_1=1}^n \int_{\R^d} \cdots \sum_{i_k=1}^n \int_{\R^d}
\xi_k( (x_1, i_1), \dots, (x_k, i_k) ) 
\prod_{j=1}^k u(x_j, i_j) \, p_1(x_j,i_j) \, dx_j
\label{e:loglu}
\end{eqnarray}
provided the series is absolutely convergent.

The next result states that the Palm moment measures of the
coverage-reweighted random measure can be expressed
in terms of those of $\Psi$.

\begin{thm}
\label{t:palm}
Let $\Psi$ be a coverage-reweighted moment stationary multivariate
random measure and $k\in\N$.
Then for all bounded Borel sets $B_1, \dots, B_k$ and all $i_1, \dots,
i_k \in \{ 1, \dots, n \}$, the Palm expectation
\[
 \EE^{(a,i)}\left[ 
\int_{a+B_1} \cdots \int_{a+B_k}
\frac{d\Psi_{i_1}(x_1) \cdots d\Psi_{i_k}(x_k) }{
 p_1( x_1, i_1) \cdots p_1( x_k, i_k ) } 
\right] =
\]
\[
\quad \quad \quad
= \int_{B_1} \cdots \int_{B_k} 
\frac{ p_{k+1}( (0,i), (x_1, i_1), \dots, (x_k, i_k) ) }{ p_1( 0,i )\,
p_1(x_1, i_1) \cdots p_1(x_k, i_k) } \, dx_1 \cdots dx_k
\]
for almost all $a\in \R^d$.  
\end{thm}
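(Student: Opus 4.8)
The plan is to reduce the claimed pointwise (a.e.) identity in $a$ to an integrated identity by means of the Campbell--Mecke formula (\ref{e:CM}), and then to extract the statement from the translation invariance built into coverage-reweighted moment stationarity. Fix the base component $i$ and an arbitrary bounded Borel set $A\subseteq\R^d$. Since the moment measures are absolutely continuous we have $d\mu^{(1)}(a,i)=p_1(a,i)\,da$, and because $\inf p_1>0$ this density is strictly positive. Hence it suffices to show that the two sides of the asserted equation, each integrated against $p_1(a,i)\,da$ over $A$, agree for every such $A$; equality for almost all $a$ then follows, which is exactly the sense in which the Palm expectation is defined.

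To evaluate the integrated left-hand side I would apply (\ref{e:CM}) to the non-negative functional
\[
g((a,j),\Psi)=\mathbf{1}\{j=i\}\,\mathbf{1}_A(a)\int_{a+B_1}\cdots\int_{a+B_k}\frac{d\Psi_{i_1}(x_1)\cdots d\Psi_{i_k}(x_k)}{p_1(x_1,i_1)\cdots p_1(x_k,i_k)}.
\]
Because $g$ is supported on $\{j=i\}$ and on $a\in A$, the right-hand side of (\ref{e:CM}) collapses to $\int_A \EE^{(a,i)}[\,\cdots\,]\,p_1(a,i)\,da$, i.e.\ precisely the integrated left-hand side of the theorem. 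Its left-hand side is the ordinary expectation of a product of $k+1$ integrals against $\Psi$; by the defining relation of the $(k+1)$-st moment measure (extended from product-box indicators to the coupled, non-product integrand $h((a,i),(x_1,i_1),\dots)=\mathbf{1}_A(a)\prod_{j}\mathbf{1}_{a+B_j}(x_j)/p_1(x_j,i_j)$ via a monotone-class argument) this equals the integral of $h$ against $\mu^{(k+1)}$, hence against its density $p_{k+1}$. Note that the moment measure here is the ordinary one, so no factorial/diagonal correction intervenes.

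The decisive step is the change of variables $x_j=a+y_j$ combined with translation invariance. After substitution the integrand carries the factor $p_{k+1}((a,i),(a+y_1,i_1),\dots,(a+y_k,i_k))/\prod_{j=1}^k p_1(a+y_j,i_j)$; multiplying and dividing by $p_1(a,i)$ rewrites this as $p_1(a,i)$ times the coverage-reweighted ratio $p_{k+1}/\big(p_1\cdots p_1\big)$ evaluated at $((a,i),(a+y_1,i_1),\dots,(a+y_k,i_k))$. Here I would invoke the partition expansion defining the $\xi_m$: since every $\xi_m$ is translation invariant, this reweighted ratio, being a sum of products of such $\xi_m$, is itself translation invariant, so it equals $p_{k+1}((0,i),(y_1,i_1),\dots,(y_k,i_k))/\big(p_1(0,i)\,p_1(y_1,i_1)\cdots p_1(y_k,i_k)\big)$, independent of $a$. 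The $a$- and $y$-integrals then factor, leaving $\int_A p_1(a,i)\,da$ times exactly the right-hand side of the theorem. Cancelling $\int_A p_1(a,i)\,da$, which is legitimate for all $A$ because $p_1$ is bounded away from zero, yields the identity for almost all $a$.

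I expect the main obstacle to lie in the measure-theoretic middle step: identifying $\EE[\prod_{j}\int f_j\,d\Psi_{i_j}]$ with the integral of $h$ against $p_{k+1}$ for the coupled integrand, and controlling integrability so that the interchanges of expectation and integration, and the approximation of $h$ by simple functions, are all valid. Local finiteness of $\mu^{(k+1)}$ on the relevant bounded region together with the bound $\inf p_1>0$ should suffice to push this through, but the convergence bookkeeping needs care. The translation-invariance identity, by contrast, is essentially immediate once the partition formula for $\xi_m$ is in hand.
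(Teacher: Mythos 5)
Your proposal is correct and follows essentially the same route as the paper's proof: Campbell--Mecke applied to a functional localized to component $i$ and a bounded test set $A$, identification of the resulting expectation with an integral against the density $p_{k+1}$ of $\mu^{(k+1)}$, translation invariance of the coverage-reweighted ratio (justified via the partition expansion into the $\xi_m$'s), and finally varying $A$ to obtain the almost-everywhere identity. The only cosmetic difference is that the paper places the factor $1/p_1(a,i)$ inside $g$ so that it cancels the density of $\mu^{(1)}$, whereas you keep $p_1(a,i)\,da$ as the integrating weight; you are also more explicit than the paper about the monotone-class step and about why the reweighted density ratio is translation invariant.
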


\begin{proof}
By (\ref{e:CM}) with $g( (a,j), \Psi) = 0$ if $j\neq i$ and
\[
g( (a,i), \Psi) =  \frac{ 1_A(a) }{p_1(a,i)}  \int_{a+B_1} \cdots \int_{a+B_k}
\frac{1}{ p_1( x_1, i_1 ) \cdots p_1( x_k, i_k ) }\, d\Psi_{i_1}(x_1)
\cdots d\Psi_{i_k}(x_k),
\]
for some bounded Borel sets $A, B_1, \dots, 
B_k \subset \R^d$ and any $i, i_1, \dots, i_k \in \{ 1, \dots, n \}$,
one sees that
\[
\EE\left[ \int_A \frac{1}{p_1(a,i)}
\int_{a+B_1} \cdots \int_{a+B_k}
\frac{1}{ p_1( x_1, i_1 ) \cdots p_1( x_k, i_k ) } \, d\Psi_{i_1}(x_1)
\cdots d\Psi_{i_k}(x_k) \,  d\Psi_i(a)
\right] =
\]
\[
= \int_A \EE^{(a,i)}\left[ 
\int_{a+B_1} \cdots \int_{a+B_k}
\frac{1}{ p_1(a,i)\, p_1( x_1, i_1 ) \cdots p_1( x_k, i_k ) } 
d\Psi_{i_1}(x_1) \cdots d\Psi_{i_k}(x_k) 
\right] p_1(a,i) \, da .
\]
The left hand side is equal to
\[
 \int_A
 \left[ \int_{B_1} \cdots \int_{B_k} \frac{p_{k+1}( (a,i), (a+x_1, i_1), 
\dots , (a+x_k, i_k) ) }{ p_1(a,i) \,
p_1(a+x_1, i_1) \cdots p_1(a+x_k, i_k) 
} \, dx_1 \cdots dx_k \right] da
\]
and the inner integrand does not depend on the choice of $a\in A$
by the assumptions on $\Psi$.
Hence, for all bounded Borel sets $A\subset \R^d$,
\[
\int_A \EE^{(a,i)}\left[ 
\int_{a+B_1} \cdots \int_{a+B_k}
\frac{d\Psi_{i_1}(x_1) \cdots d\Psi_{i_k}(x_k) }{
 p_1( x_1, i_1) \cdots p_1( x_k, i_k ) } 
\right]
 da =
\]
\[
=
 \int_A 
 \left[ \int_{B_1} \cdots \int_{B_k} \frac{p_{k+1}( (0,i), (x_1, i_1), 
\dots , (x_k, i_k) ) }{ p_1(0,i) \,
p_1(x_1, i_1) \cdots p_1(x_k, i_k) 
} \, dx_1 \cdots dx_k \right] da.
\]
Therefore the Palm expectation takes the same value for almost all
$a\in\R^d$ as claimed. 
\end{proof}

\section{Summary statistics for multivariate random 
measures}
\label{S:statistics}

\subsection{The inhomogeneous cross $K$-function}

For the coverage measures associated to a stationary bivariate 
random closed set, Stoyan and Ohser \cite{StoyOhse82} defined
the {\em reduced cross correlation measure\/} as follows.
Let $B(x,t)$ be the closed ball of radius $t\geq 0$ centred
at $x\in\R^d$ and set, for any bounded Borel set $B$ of positive
volume $\ell(B)$,
\begin{equation}
\label{e:K-stat}
R_{12}(t) = \frac{1}{p_1( 0,1) \, p_1(0,2) }
\EE\left[
\frac{1}{\ell(B)} \int_B \Psi_2( B(x,t) ) \, d\Psi_1(x)
\right].
\end{equation}
Due to the assumed stationarity, the definition does not depend 
on the choice of $B$. In the 
univariate case, Ayala and Sim\'o \cite{AyalSimo98} called a 
function of this type the $K$-function in analogy to a similar 
statistic for point processes \cite{Digg83,Ripl77}. 

In order to modify (\ref{e:K-stat}) so that it applies to more 
general, not necessarily stationary, random measures, we focus 
on the second order coverage-reweighted cumulant density $\xi_2$ 
and assume it is invariant under translations. If additionally 
$p_1$ is bounded away from zero, $\Psi$ is {\em second order 
coverage-reweighted stationary\/}.

\begin{defn} 
\label{d:k12}
Let $\Psi = (\Psi_1, \Psi_2)$ be a bivariate random measure which admits
a second order coverage-reweighted cumulant density $\xi_2$ that is 
invariant under translations and a coverage function $p_1$ that is bounded 
away from zero. Then, for $t\geq 0$, the cross $K$-function is defined by
\[
K_{12}(t) =  \int_{B(0,t)} (1 + \xi_2( (0, 1), ( x, 2) ) ) \, dx .
\]
\end{defn}

Note that the cross $K$-function is symmetric in the components of $\Psi$, 
that is, $K_{12} = K_{21}$. The next result gives an alternative expression
in terms of the expected content of a ball under the Palm distribution 
of the coverage-reweighted random measure.

\begin{res}
\label{l:Kpalm}
Let $\Psi = (\Psi_1, \Psi_2)$ be a second order coverage-reweighted stationary 
bivariate random measure and write $B(a,t)$ for the closed ball of radius 
$t\geq 0$ around $a\in\R^d$. Then
\[
K_{12}(t) = \EE^{(a,1)} \left[ 
\int_{ B(a,t) } \frac{1}{p_1(x,2)} \, d\Psi_2(x) \right]
\]
and the right hand side does not depend on the choice of $a\in\R^d$.
\end{res}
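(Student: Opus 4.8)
The plan is to specialise Theorem~\ref{t:palm} to the case $k=1$ and then read off the claimed identity after rewriting the integrand in terms of $\xi_2$. Concretely, I would invoke the $k=1$ instance of Theorem~\ref{t:palm} with reference mark $i=1$, single label $i_1 = 2$ and single set $B_1 = B(0,t)$. Since $a + B(0,t) = B(a,t)$, the left-hand side of that identity is exactly the Palm expectation appearing in the lemma, namely $\EE^{(a,1)}[ \int_{B(a,t)} p_1(x,2)^{-1}\, d\Psi_2(x) ]$, so no further manipulation of the left-hand side is needed.

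Next I would simplify the corresponding right-hand side, $\int_{B(0,t)} p_2((0,1),(x,2)) \, [ p_1(0,1)\, p_1(x,2) ]^{-1}\, dx$. Using the explicit $k=2$ relation between the pair coverage function and the second order coverage-reweighted cumulant density, $p_2((0,1),(x,2)) / ( p_1(0,1)\, p_1(x,2) ) = 1 + \xi_2((0,1),(x,2))$, the integrand becomes $1 + \xi_2((0,1),(x,2))$, and the integral is then precisely $K_{12}(t)$ as given in Definition~\ref{d:k12}. This establishes the stated equality. The independence of the right-hand side from $a$ is then immediate: the quantity it equals, $K_{12}(t)$, carries no dependence on $a$, while Theorem~\ref{t:palm} delivers the identity only for almost all $a$, which is consistent with the Palm distribution itself being defined up to a $\mu^{(1)}$-null set.

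The one point that needs care — and the only genuine obstacle — is that Theorem~\ref{t:palm} is stated for a fully coverage-reweighted moment stationary measure, whereas here I only assume second order coverage-reweighted stationarity. I would resolve this by checking that the proof of Theorem~\ref{t:palm} in the case $k=1$ uses nothing beyond the translation invariance of $\xi_2$: the inner integrand produced by the Campbell--Mecke formula is $p_2((a,1),(a+x,2)) / ( p_1(a,1)\, p_1(a+x,2) ) = 1 + \xi_2((a,1),(a+x,2))$, which is constant in $a$ exactly because $\xi_2$ is translation invariant, and the assumption that $p_1$ is bounded away from zero guarantees that all reweighted integrals are well defined. Hence the weaker hypotheses of the lemma suffice; and should one wish to avoid appealing to Theorem~\ref{t:palm} directly, the same short Campbell--Mecke computation can simply be carried out afresh for $k=1$.
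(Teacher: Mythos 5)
Your proof is correct and follows essentially the same route as the paper's: apply Theorem~\ref{t:palm} with $k=1$, $i=1$, $i_1=2$, $B_1=B(0,t)$, and rewrite the resulting integrand via $p_2((0,1),(x,2))/(p_1(0,1)\,p_1(x,2)) = 1+\xi_2((0,1),(x,2))$ to recognise $K_{12}(t)$. Your additional check that the $k=1$ instance of Theorem~\ref{t:palm} uses only translation invariance of $\xi_2$ and $p_1$ bounded away from zero is a worthwhile refinement, since it closes the hypothesis mismatch (second order versus full coverage-reweighted moment stationarity) that the paper's own proof passes over silently.
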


\begin{proof}
Apply Theorem~\ref{t:palm} for $k=1$, $i = 1$, $B_1 = B(0,t)$ and
$i_1=2$ to obtain
\[
\EE^{(a,1)}\left[ \int_{B(a,t)} \frac{1}{p_1(x, 2)} d\Psi_2(x) \right] = 
\int_{B(0,t)} \frac{p_2((0,1),(x,2))}{p_1(0,1) \, p_1(x,2)} \, dx =
\int_{B(0,t)} (1 + \xi_2( (0, 1), ( x, 2) ) ) \, dx .
\]
\end{proof}

To interpret the statistic, recall that $\xi_2$ is equal to the
coverage-reweighted covariance. Thus, if $\Psi_1$ and $\Psi_2$ are 
independent, 
\[
K_{12}(t) = \ell( B(0,t) ),
\]
the Lebesgue measure of $B(0,t)$.
Larger values are due to positive correlation, smaller ones  to
negative correlation between $\Psi_1$ and $\Psi_2$.
Furthermore, if $\Psi = (\Psi_1, \Psi_2)$ is stationary,  Lemma~\ref{l:Kpalm} 
implies that
\[
K_{12}(t) 
 =   \frac{1}{p_1(0,2)} \EE^{(0,1)}\left[ \Psi_2(B(0,t))\right] 
\]
which, by the Campbell--Mecke equation (\ref{e:CM}), is equal to
\[
 \frac{1}{p_1(0,1) p_1(0,2)}
\EE\left[ 
   \frac{1}{\ell(B)} \int_B \Psi_2(B(x,t))  \, d\Psi_1(x)
\right] .
\]
Consequently, $K_{12}(t) = R_{12}(t)$, the reduced cross correlation measure
of \cite{StoyOhse82}.

\subsection{Inhomogeneous cross $J$-function}

The cross $K$-function is based on the second order coverage-reweighted
cumulant density. In this section, we propose a new statistic that 
encorporates the coverage-reweighted cumulant densities of all orders.

\begin{defn}
\label{d:j12}
Let $\Psi = (\Psi_1, \Psi_2)$ be a coverage-reweighted moment stationary
bivariate random measure. For $t\geq 0$ and $k\geq 1$, set
\[
J_{12}^{(k)}(t) = \int_{B(0,t)} \cdots \int_{B(0,t)} 
\xi_{k+1}( (0, 1), (x_1, 2), \dots, (x_k, 2) ) \, dx_1 \cdots dx_k
\]
and define the cross $J$-function by
\[
J_{12}(t) = 1 + \sum_{k=1}^\infty \frac{(-1)^k}{k!} J_{12}^{(k)}(t)
\]
for all $t\geq 0$ for which the series is absolutely convergent.
\end{defn}

Note that 
\[
J_{12}^{(1)}(t) = K_{12}(t) - \ell(B(0,t)).
\]
The appeal of Definition~\ref{d:j12} lies in the fact that its
dependence on the cumulant densities and, furthermore, its 
relation to $K_{12}$ are immediately apparent.
However, being an alternating series, $J_{12}(t)$ is not convenient 
to handle in practice. The next theorem gives a simpler 
characterisation in terms of the Laplace transform.

\begin{thm}
\label{t:j12}
Let $\Psi = (\Psi_1, \Psi_2)$ be a coverage-reweighted moment stationary
bivariate random measure. Then, for $t\geq 0$ and $a\in \R^d$,
\begin{equation}
\label{e:j12}
J_{12}(t) = \frac{L^{(a,1)}(u_t^a)}{L(u_t^a)}
\end{equation}
for $u_t^a(x,i) = 1\{ (x,i) \in B(a,t) \times \{ 2 \} ) / p_1( x, i)$,
provided the series expansions of $L(u_t^a)$ and $J_{12}(t)$ are absolutely 
convergent. In particular, $J_{12}(t)$ does not depend on the choice of 
origin $a\in\R^d$.
\end{thm}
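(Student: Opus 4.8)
The plan is to expand both sides of (\ref{e:j12}) as power series whose coefficients are integrals of coverage-reweighted coverage functions, and then to match them through the partition (moment--cumulant) relation that defines the $\xi_k$. Write $Z = \int_{B(a,t)} d\Psi_2(x)/p_1(x,2)$. Since $u_t^a$ is supported on $B(a,t)\times\{2\}$ and satisfies $u_t^a(x,2)\,p_1(x,2) = 1$ there, the definition of the Laplace functional gives $L(u_t^a) = \EE[e^{-Z}]$ and, for its Palm counterpart, $L^{(a,1)}(u_t^a) = \EE^{(a,1)}[e^{-Z}]$. Expanding the exponential and taking expectations termwise, I obtain $L(u_t^a) = \sum_{k\geq 0} \frac{(-1)^k}{k!} M_k$ and $L^{(a,1)}(u_t^a) = \sum_{k\geq 0} \frac{(-1)^k}{k!} M_k^*$, where $M_k = \EE[Z^k]$ and $M_k^* = \EE^{(a,1)}[Z^k]$, with $M_0 = M_0^* = 1$. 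The ordinary moment $M_k$ is the integral of $p_k/\prod_j p_1$ over $B(a,t)^k$ (equivalently read off from (\ref{e:lu})), while Theorem~\ref{t:palm}, applied with $i=1$, all $i_j = 2$, and $B_j = B(0,t)$, identifies $M_k^*$ with the integral of $p_{k+1}((0,1),\cdot)/[p_1(0,1)\prod_j p_1]$ over $B(0,t)^k$. By translation invariance of the $\xi_k$, hence of these ratios, both $M_k$ and $M_k^*$ are independent of $a$.

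The heart of the argument is combinatorial. First I would substitute the partition formula for $\xi_k$ into the integrand of $M_k$ and integrate, obtaining the moment--cumulant relation $M_k = \sum_{\pi} \prod_{D\in\pi} I_{|D|}$, where the sum runs over partitions $\pi$ of $\{1,\dots,k\}$ and $I_m = \int_{B(0,t)^m} \xi_m((x_1,2),\dots,(x_m,2))\,dx_1\cdots dx_m$. Doing the same for $M_k^*$, I would classify the partitions of $\{0,1,\dots,k\}$ according to the block $D_0$ containing the index $0$ of the root point $(0,1)$. Writing $D_0 = \{0\}\cup S$ with $|S| = s$, the integral of its $\xi$-factor over the type-$2$ points indexed by $S$ is precisely $J_{12}^{(s)}(t)$ of Definition~\ref{d:j12} (with $J_{12}^{(0)}(t) = 1$ since $\xi_1 \equiv 1$), while the remaining blocks partition the size-$(k-s)$ set of type-$2$ indices and, summed over all such partitions, reproduce $M_{k-s}$. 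This produces the binomial convolution
\[
M_k^* = \sum_{s=0}^k \binom{k}{s}\, J_{12}^{(s)}(t)\, M_{k-s} .
\]

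Finally I would read this identity through exponential generating functions: it says exactly that $\sum_k \frac{M_k^*}{k!} w^k$ factors as $\bigl(\sum_s \frac{J_{12}^{(s)}(t)}{s!} w^s\bigr)\bigl(\sum_j \frac{M_j}{j!} w^j\bigr)$. Evaluating the alternating series at $w = -1$ and recalling from Definition~\ref{d:j12} that $\sum_s \frac{(-1)^s}{s!} J_{12}^{(s)}(t) = J_{12}(t)$, I get $L^{(a,1)}(u_t^a) = J_{12}(t)\, L(u_t^a)$, and dividing by $L(u_t^a)$ yields (\ref{e:j12}). Since every coefficient entering the two factor series depends on $a$ only through the translation-invariant densities $\xi_k$, the ratio is independent of the choice of origin, which is the final assertion.

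The step I expect to be the main obstacle is the analytic justification of the termwise manipulations rather than the algebra itself: writing $\EE[e^{-Z}]$ and $\EE^{(a,1)}[e^{-Z}]$ as their moment series, interchanging the partition sum with the multiple integrals, and splitting $\sum_k M_k^* w^k/k!$ into a product of two series all require a Fubini/rearrangement argument, which is legitimate precisely under the absolute convergence hypothesised for the expansions of $L(u_t^a)$ and $J_{12}(t)$ (convergence of the $L^{(a,1)}(u_t^a)$ series then follows from the convolution identity). The combinatorial bookkeeping of the root block $D_0$ is the conceptual core but is routine once the indexing is fixed.
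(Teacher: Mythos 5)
Your proof is correct and follows essentially the same route as the paper: expand both Laplace functionals as alternating moment series, identify the Palm moments via Theorem~\ref{t:palm}, substitute the partition (moment--cumulant) formula, and split each partition according to the block containing the root point $(0,1)$, so that the root block yields $J_{12}^{(s)}(t)$ and the remaining blocks reassemble into $L(u_t^a)$. Your packaging of the final step as the binomial convolution $M_k^* = \sum_s \binom{k}{s} J_{12}^{(s)}(t) M_{k-s}$ with an exponential-generating-function factorisation evaluated at $w=-1$ is simply a more explicit rendering of what the paper calls ``basic combinatorial arguments,'' and your closing remark that absolute convergence of the two factor series implies that of the Palm series matches the paper's argument exactly.
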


\begin{proof}
First, note that, by (\ref{e:loglu}),
$L(u_t^a)$ does not depend on the choice of $a$. Also,
by Theorem~\ref{t:palm} and the series expansion (\ref{e:lu}) of the Laplace
transform for $u_t^a(x,i)$, provided the series is absolutely convergent,
\begin{eqnarray*}
L^{(a,1)} ( u_t^a ) & = & 1 + \sum_{k=1}^\infty \frac{(-1)^k}{k!} 
  \EE^{(a,1)} \left[ \int_{B(a,t)} \cdots \int_{B(a,t)} 
\frac{d\Psi_2(x_1) \cdots d\Psi_2(x_k)}{p_1(x_1,2) \cdots p_1(x_k,2) }
\right] \\
& = &
 1 + \sum_{k=1}^\infty \frac{(-1)^k}{k!} \int_{B(0,t)} \cdots \int_{B(0,t)}
\frac{ p_{k+1}((0,1), (x_2,2), \dots, (x_{k+1},2)) }{p_1(0,1) p_1(x_2,2) 
\cdots p_1(x_{k+1},2)} \, dx_2 \cdots dx_{k+1} \\
& = &
1 + \sum_{k=1}^{\infty} \frac{(-1)^k}{k!}
\int_{B(0,t)} \cdots \int_{B(0,t)} \sum_{m=1}^{k+1} \sum_{D_1, \ldots, D_m}
\prod_{j=1}^{m} \xi_{|D_j|}(\{(x_l, i_l): l \in D_j\}) 
\prod_{i=2}^{k+1} dx_i,
\end{eqnarray*}
where $(x_1,i_1) \equiv (0,1)$ and $i_l = 2$ for $l>1$.
By splitting the last expression into terms based on whether the sets
$D_j$ contain the index $1$ (i.e.\ on whether $\xi_{|D_j|}$ includes
$(x_1,i_1) \equiv (0,1)$), under the convention that $\sum_{k=1}^{0} = 1$,
we obtain
\[
L^{(a,1)} ( u_t^a ) = 1 +
 \sum_{k=1}^{\infty} \frac{(-1)^k}{k!} \sum_{\Pi\in\mathcal{P}_k}
   J^{(|\Pi|)}_{12}(t) \sum_{m=1}^{k-|\Pi|}
  \sum_{\substack{D_1, \ldots, D_m \neq \emptyset \text{ disjoint}\\
   \cup_{j=1}^{m} D_j = \{1, \ldots, k \} \setminus \Pi}}
     \prod_{j=1}^{m} I_{|D_j|},
\]
where
\[
I_{k} = \int_{B(0,t)} \cdots \int_{B(0,t)}
 \xi_{k}((x_1, 2), \ldots, (x_{k}, 2)) \, dx_1 \cdots dx_k,
\]
$J^{(0)}_{12}(t) \equiv 1$, and $\mathcal{P}_{k}$ is the power set of 
$\{1,\ldots,k\}$. Finally, by noting that the expansion contains terms of
the form $J_{12}^{(k)}(t) I_{k_1}^{m_1} \cdots I_{k_n}^{m_n}$ multiplied by a scalar
and basic combinatorial arguments, we conclude that
\begin{eqnarray*}
L^{(a,1)} ( u_t^a ) & = & 
   \left( 1 +  \sum_{k=1}^{\infty} \frac{(-1)^k}{k!}  J_{12}^{(k)}(t)\right) 
\times 
   \left( 1 + \sum_{k=1}^{\infty} \frac{(-1)^k}{k!} \sum_{m=1}^{k}
   \sum_{\substack{D_1, \ldots, D_m \neq \emptyset\text{ disjoint}\\
     \cup_{j=1}^{m} D_j = \{ 1, \ldots, k \}}} \prod_{j=1}^{m} I_{|D_j|} \right)
\\
& = & J_{12}(t) \, L(u_t^a).
\end{eqnarray*}
The right hand side does not depend on $a$ and is absolutely convergent 
as a product of absolutely convergent terms. Therefore, so is the series 
expansion for $L^{(a,1)}$.
\end{proof}

Heuristically, the cross $J$-function compares expectations 
under the Palm distribution $P^{(0,1)}$ to those under the distribution
$P$ of $\Psi$. If the components of $\Psi$ are independent,
conditioning on the first component placing mass at the origin
does not affect the second component, so $J_{12}(t) = 1$. 
A value larger than $1$ means that such conditioning tends
to lead to a smaller $\Psi_2( B(0,t) )$ content (typical for negative 
association); analogously, $J_{12}(t) < 1$ suggests positive association
between the components of $\Psi$.

\section{Examples}
\label{S:theo-ex}

In this section we calculate the cross $K$- and $J$-statistics for a range 
of well-known models.

\subsection{Compound random measures}

Let 
\(
\Lambda = ( \Lambda_1, \Lambda_2)
\)
be a random vector such that its components take values in 
$\R^+$ and have finite, strictly positive expectation. Set
\begin{equation}
\label{e:Psi-Cox}
\Psi = ( \Lambda_1 \nu, \Lambda_2 \nu )
\end{equation}
for some locally finite Borel measure $\nu$ on $\R^d$ that is 
absolutely continuous with density function $f_\nu \geq \epsilon > 0$. 
In other words, 
\(
\Psi_i(B) = \Lambda_i \int_B f_\nu(x) \, dx = \Lambda_i \, \nu(B).
\)

\begin{thm}
\label{t:compound}
The bivariate random measure (\ref{e:Psi-Cox}) is coverage-reweighted moment 
stationary and 
\begin{eqnarray*}
K_{12}(t) & = & \kappa_d t^d \left( 1 + \frac{ {\rm{Cov}}( \Lambda_1, \Lambda_2) 
 }{\EE (\Lambda_1) \, \EE( \Lambda_2 )} \right) \\
J_{12}(t) & = & 
\frac{\EE\left( \Lambda_1 \exp\left[
   - \Lambda_2 \kappa_d t^d / \EE \Lambda_2  \right] 
\right) }{ \EE\left( \Lambda_1 \right) \EE\left( \exp\left[
   - \Lambda_2 \kappa_d t^d / \EE \Lambda_2  \right] \right)}.
\end{eqnarray*}
\end{thm}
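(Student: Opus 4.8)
The plan is to compute the coverage function $p_1$ and the cumulant densities $\xi_k$ for the compound measure $\Psi = (\Lambda_1 \nu, \Lambda_2 \nu)$ explicitly, then feed these into the definitions of $K_{12}$ and $J_{12}$. First I would identify the moment measures. Since $\Psi_i(B) = \Lambda_i \nu(B) = \Lambda_i \int_B f_\nu(x)\,dx$, the $k$-point coverage function factorises as
\[
p_k((x_1,i_1),\dots,(x_k,i_k)) = \EE\left(\Lambda_{i_1}\cdots\Lambda_{i_k}\right) f_\nu(x_1)\cdots f_\nu(x_k),
\]
so in particular $p_1(x,i) = \EE(\Lambda_i) f_\nu(x) \geq \EE(\Lambda_i)\,\epsilon > 0$, confirming $p_1$ is bounded away from zero. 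Dividing out, the coverage-reweighted ratio $p_k/\prod p_1$ reduces to $\EE(\Lambda_{i_1}\cdots\Lambda_{i_k})/\prod_j \EE(\Lambda_{i_j})$, which is \emph{independent of the spatial variables} $x_j$. This immediately forces every $\xi_k$ to be spatially constant, hence translation invariant, establishing coverage-reweighted moment stationarity; the $\xi_k$ are then recovered from the moment--cumulant recursion applied to the normalised joint moments of the random vector $\Lambda$.

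For the $K$-function I would use Definition~\ref{d:k12} directly. Only $\xi_2$ is needed, and from the $k=2$ formula in the excerpt,
\[
\xi_2((0,1),(x,2)) = \frac{\EE(\Lambda_1\Lambda_2) - \EE(\Lambda_1)\EE(\Lambda_2)}{\EE(\Lambda_1)\EE(\Lambda_2)} = \frac{\mathrm{Cov}(\Lambda_1,\Lambda_2)}{\EE(\Lambda_1)\EE(\Lambda_2)},
\]
a constant. Integrating $1 + \xi_2$ over $B(0,t)$ multiplies this constant by the volume $\kappa_d t^d$, giving the stated $K_{12}$ formula at once.

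For the $J$-function I would avoid the cumulant recursion entirely and instead exploit the Laplace-transform characterisation of Theorem~\ref{t:j12}, which expresses $J_{12}(t) = L^{(a,1)}(u_t^a)/L(u_t^a)$ with $u_t^a$ the reweighted indicator of $B(a,t)\times\{2\}$. The key computation is that $\sum_i \int u_t^a(x,i)\,d\Psi_i(x) = \Lambda_2 \int_{B(a,t)} f_\nu(x)/p_1(x,2)\,dx = \Lambda_2 \,\nu(B(a,t))/(\EE(\Lambda_2)\,\nu(B(a,t))) \cdot \nu(B(a,t))$; more cleanly, since $p_1(x,2) = \EE(\Lambda_2) f_\nu(x)$, the integrand $f_\nu(x)/p_1(x,2) = 1/\EE(\Lambda_2)$ is constant, so the whole integral collapses to $\Lambda_2 \,\kappa_d t^d / \EE(\Lambda_2)$. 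Hence $L(u_t^a) = \EE\exp[-\Lambda_2 \kappa_d t^d/\EE(\Lambda_2)]$. The numerator $L^{(a,1)}(u_t^a)$ is the same Laplace functional under the Palm distribution at a point of type $1$; for this factorised model the Palm distribution at $(a,1)$ size-biases the law of $\Lambda$ by $\Lambda_1$ (this is the analogue of the standard Palm calculus for Cox/compound measures and can be verified through the Campbell--Mecke identity (\ref{e:CM})), which introduces the weight $\Lambda_1$ inside the expectation and the normaliser $\EE(\Lambda_1)$. Dividing yields the claimed $J_{12}$.

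The main obstacle is justifying the Palm-side computation rigorously: one must show that $\EE^{(a,1)}[\exp(-\Lambda_2 \kappa_d t^d/\EE\Lambda_2)] = \EE(\Lambda_1 \exp[\cdots])/\EE(\Lambda_1)$, i.e.\ that conditioning on $\Psi_1$ charging the point $(a,1)$ size-biases $\Lambda$ by its first coordinate. I would establish this cleanly by verifying the defining identity (\ref{e:CM}) for $\Psi$: taking a test function $g((a,1),\Psi) = h(a) \exp(-\Lambda_2 \kappa_d t^d/\EE\Lambda_2)$ and evaluating the left side as $\EE[\Lambda_1 \exp(\cdots)\int h\,d\nu]$ against the right side $\int h(a)\,\EE^{(a,1)}[\cdots]\,\EE(\Lambda_1) f_\nu(a)\,da$ pins down the Palm expectation. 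A secondary concern is absolute convergence of the series expansions required by Theorem~\ref{t:j12}, but since the exponents involve the bounded, nonnegative random variable $\Lambda_2 \kappa_d t^d/\EE\Lambda_2$ and the $\xi_k$ are constants controlled by the joint moments of $\Lambda$, convergence follows from the finiteness of those moments; I would note this holds under the standing assumption that $\Lambda$ has finite moments of all relevant orders, and otherwise restrict $t$ to the range where the series converge, consistent with the hypotheses already in force.
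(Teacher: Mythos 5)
Your proposal is correct and follows essentially the same route as the paper: factorise the $k$-point coverage functions as $\EE(\Lambda_{i_1}\cdots\Lambda_{i_k})\prod_j f_\nu(x_j)$ to get coverage-reweighted moment stationarity, integrate the constant $\xi_2 = \mathrm{Cov}(\Lambda_1,\Lambda_2)/(\EE\Lambda_1\,\EE\Lambda_2)$ for $K_{12}$, and evaluate $J_{12}$ through Theorem~\ref{t:j12} with the integral collapsing to $\Lambda_2\kappa_d t^d/\EE\Lambda_2$. The only difference is that where the paper simply cites Daley--Vere-Jones for the fact that $P^{(0,1)}$ is the $\Lambda_1$-weighted distribution, you verify this size-biasing directly from the Campbell--Mecke formula (\ref{e:CM}), which is a sound and self-contained substitute.
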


Both statistics do not depend on $f_\nu$. The cross
$J$-function is equal to the $\Lambda_1$-weighted Laplace functional
of $\Lambda_2$ evaluated in $\kappa_d t^d / \EE\Lambda_2$.

To see that both statistics capture a form of `dependence' between the 
components of $\Psi$, note that the cross $K$-function exceeds 
$\kappa_d t^d$ if and only if $\Lambda_1$ and $\Lambda_2$ are positively 
correlated. For the cross $J$-function, recall that two random variables 
$X$ and $Y$ are {\em negatively quadrant dependent\/} if 
Cov$(f(X), g(Y)) \leq 0$ whenever $f, g$ are 
non-decreasing functions, {\em positively quadrant dependent\/} if 
Cov$(f(X), g(Y)) \geq 0$ (provided the moments exist)
\cite{Esar67,Kuma83,Lehm66}. Applied to our context, it follows that 
if $\Lambda_1$ and $\Lambda_2$ are positively quadrant dependent, 
$J_{12}(t) \leq 1$ whilst $J_{12}(t) \geq 1$ if $\Lambda_1$ and 
$\Lambda_2$ are negatively quadrant dependent.

\

\begin{proof}
Since
\[
\EE\left[ \Psi_1(B_1) \cdots \Psi_1(B_k) \Psi_2(B_{k+1}) \cdots \Psi_2(B_{k+l})
\right] = 
\EE\left(  \Lambda_1^k \Lambda_2^l \right) \int_{B_1} \cdots \int_{B_{k+l}}
\prod_{i=1}^{k+l} f_\nu(x_i) \, dx_1 \cdots dx_{k+l},
\]
the coverage function of $\Psi$ is given by
\[
p_{k+l}( (x_1, 1), \dots, (x_k, 1), (x_{k+1}, 2), \dots, (x_{k+l}, 2) )
= 
\EE\left( \Lambda_1^k \Lambda_2^l \right) \prod_{i=1}^{k+l} f_\nu(x_i)
\]
so that the coverage-reweighted cumulant densities of $\Psi$ are 
translation invariant. The assumptions imply that $p_1(x,i) =
\EE(\Lambda_i) f_\nu(x)$ is bounded away from zero. Hence, $\Psi$
is coverage-reweighted moment stationary. 

Specialising to second order, one finds that 
\[
\xi_2( (0,1), (x,2) ) = \frac{\EE( \Lambda_1 \Lambda_2 ) - 
\EE (\Lambda_1) \, \EE( \Lambda_2 )}{\EE (\Lambda_1) \, \EE( \Lambda_2 )} =
\frac{{\rm{Cov}}(\Lambda_1, \Lambda_2)}{\EE (\Lambda_1) \, \EE( \Lambda_2 )}
\]
from which the expression for $K_{12}(t)$ follows upon integration.

As for the cross $J$-function, the denominator in Theorem~\ref{t:j12} 
can be written as 
\[
L(u_t^0)  =  
\EE \exp\left[ 
  - \int_{B(0,t)} \frac{1}{\EE(\Lambda_2) f_\nu(x)} d \Psi_2(x) 
\right] 
 = \EE \exp\left[ - \Lambda_2 \kappa_d t^d / \EE \Lambda_2 \right] .
\]
For the numerator, we need the Palm distribution of $\Lambda_1$.
By \cite[p.~274]{DaleVere08}, $P^{(0,1)}$ is $\Lambda_1$-weighted 
and the proof is complete.
\end{proof}

Let us consider two specific examples discussed in
\cite[Section~6.6]{Digg83}.

\paragraph{Linked model}

Let $\Psi_2 = A \Psi_1$ for some $A>0$. 
Since, for $l_1, l_2 \in\R^+$,
\[
\PP( \Lambda_1 \leq l_1; \Lambda_2 \leq l_2 ) =
\PP( \Lambda_1 \leq \min( l_1, l_2 / A ) ) \geq 
\PP( \Lambda_1 \leq l_1 ) \, \PP( A \Lambda_1  \leq l_2 ),
\]
$\Lambda_1$ and $\Lambda_2$ are positively quadrant dependent
\cite[Theorem~4.4]{Esar67}
and, a fortiori, positively correlated. Therefore
\(
K_{12}(t) \geq \kappa_d t^d
\)
and
\(
J_{12}(t) \leq 1.
\)

\paragraph{Balanced model}

Let $\Lambda_1$ be supported on the interval $(0, A)$ for some $A>0$ and
set $\Lambda_2 =  A - \Lambda_1$. Since, for $l_1, l_2 \in (0, A)$
such that $A - l_2 \leq l_1$,
\[
\PP( \Lambda_1 \leq l_1; \Lambda_2 \leq l_2 ) =
\PP( \Lambda_1 \leq l_1 ) - \PP(\Lambda_1 < A - l_2) \leq
\]
\[
\leq \PP( \Lambda_1 \leq l_1 ) - 
\PP( \Lambda_1 \leq l_1 ) \, \PP(\Lambda_1 < A - l_2) 
 =
\PP( \Lambda_1 \leq l_1 ) \, \PP( \Lambda_2 \leq l_2 ),
\]
$\Lambda_1$ and $\Lambda_2$ are negatively quadrant dependent
\cite{Kuma83}
and, a fortiori, negatively correlated. Therefore
\(
K_{12}(t) \leq \kappa_d t^d
\)
and
\(
J_{12}(t) \geq 1.
\)

\

By Theorem~\ref{t:compound}, the cross $K$-function is increasing
in $t$. It can be shown that under the extra assumption of finite
second order moments, for the linked model, $J_{12}(t)$ is 
monotonically non-increasing. Analogously, in the balanced case, 
$J_{12}(t)$ is non-decreasing \cite{LiesBadd96}. A proof is given 
in the Appendix.

\subsection{Coverage measure of random closed sets}
\label{S:rcs}

Let 
\(
X = (X_1, X_2)
\)
be a bivariate random closed set. Then, by Robbins' theorem 
\cite[Theorem~4.21]{Molc05}, the Lebesgue content
\[
 \ell(X_i\cap B) = \int_B 1\{ x \in X_i \} \, dx
\]
of $X_i \cap B$ is a random variable for every Borel set 
$B \subseteq \R^d$ and every component $X_i$, $i=1, 2$. Letting 
$B$ and $i$ vary, one obtains a bivariate random measure denoted 
by $\Psi$. Clearly, $\Psi$ is locally finite.

Reversely, a bivariate random measure $\Psi = (\Psi_1, \Psi_2)$  
defines a bivariate random closed set by the supports
\[
{\rm{supp}}(\Psi_i) =
 \bigcap_{n=0}^\infty {\rm{cl}}( \{ x_j \in \Q^d: \Psi_i( B\left(
  x_j, {1}/{n}\right) ) > 0 \} )
\]
where $B\left(x_j, {1}/{n}\right)$ is the closed ball around $x_j$ 
with radius $1/n$ and ${\rm{cl}}(B)$ is the topological closure of 
the Borel set $B$. In other words, if $x \in {\rm{supp}}(\Psi_i)$,
then every ball that contains $x$ has strictly positive $\Psi_i$-mass.
By \cite[Prop.~8.16]{Molc05}, the supports are well-defined random 
closed sets whose joint distribution is uniquely determined by that 
of the random measures. 

Indeed, Ayala {\em et al.\/} \cite{Ayal91} proved the following result.

\begin{thm}
\label{t:ayala}
Let $X = (X_1, \dots, X_n)$ be a multivariate random closed set. Then the 
distribution of $X$ is recoverable from $\Psi = (\ell(X_1 \cap \cdot),
\cdots, \ell(X_n \cap \cdot) )$ if and only if $X$ is distributed as the 
(random) support of $\Psi$.
\end{thm}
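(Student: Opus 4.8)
The plan is to factor the coverage map $F \mapsto \ell(F\cap\cdot)$ through the support operation and to isolate a single deterministic lemma on which both implications rest: for a closed set $F\subseteq\R^d$, the set $F$ and the support of its own coverage measure have identical coverage measures. Writing $C(F)(B)=\ell(F\cap B)$ and keeping the notation $\mathrm{supp}(\cdot)$ for the support of a locally finite measure as constructed above, the lemma reads $C(F)=C(\mathrm{supp}(C(F)))$. Granting this, the support map becomes a one-sided inverse of $C$ on the class of closed sets that coincide with the support of their coverage, and the equivalence in the theorem becomes the statement that this inverse is well defined at the level of distributions.

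The deterministic lemma I would establish componentwise and pathwise. First I would show $\mathrm{supp}(C(F))\subseteq F$: if $x\notin F$ then, $F$ being closed, some ball $B(x,\varepsilon)$ misses $F$, so $C(F)(B(x,\varepsilon))=0$ and $x$ cannot lie in the support. Second, by the very definition of the support as the smallest closed set carrying the full mass of the locally finite measure $C(F)$, one has $C(F)(\R^d\setminus \mathrm{supp}(C(F)))=0$, that is, $\ell(F\setminus \mathrm{supp}(C(F)))=0$. Combining the inclusion with this null-set identity yields, for every Borel $B$, the equality $\ell(F\cap B)=\ell(\mathrm{supp}(C(F))\cap B)$, which is the claim. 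Applying this to each component $X_i$ shows that $X$ and the random closed set $(\mathrm{supp}(\Psi_1),\dots,\mathrm{supp}(\Psi_n))$ share one and the same coverage measure $\Psi$, realisation by realisation.

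For the implication that $X\stackrel{d}{=}\mathrm{supp}(\Psi)$ makes the distribution of $X$ recoverable, I would invoke \cite[Prop.~8.16]{Molc05}: the support is a measurable functional of the measure, so the law of $\mathrm{supp}(\Psi)$ is the pushforward of the law of $\Psi$ and is therefore determined by the latter alone; since $X$ and $\mathrm{supp}(\Psi)$ have the same law by hypothesis, the distribution of $X$ is determined by that of $\Psi$. For the converse, suppose the law of $X$ is recoverable, that is, it is the only random-closed-set law producing the coverage law of $\Psi$. By the lemma the random closed set $\mathrm{supp}(\Psi)$ has coverage measure exactly $\Psi$, hence the same coverage law as $X$, so it is a competing preimage; uniqueness then forces the law of $X$ to equal that of $\mathrm{supp}(\Psi)$, i.e.\ $X\stackrel{d}{=}\mathrm{supp}(\Psi)$.

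The main obstacle I anticipate is the deterministic lemma together with the measurability bookkeeping around it: one must use $\ell(F\setminus \mathrm{supp}(C(F)))=0$ correctly (it follows from $C(F)$ being concentrated on its support for a locally finite Borel measure) and must ensure that the maps $\omega\mapsto X_i(\omega)\mapsto\Psi_i(\omega)\mapsto\mathrm{supp}(\Psi_i)(\omega)$ are jointly measurable, so that the pathwise identity of coverage measures upgrades to the identity of laws used in both implications. The measurability of $\ell(X_i\cap\cdot)$ is supplied by Robbins' theorem and that of the support by \cite[Prop.~8.16]{Molc05}; the remaining content is the elementary set-theoretic lemma above.
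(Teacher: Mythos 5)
You should first know that the paper itself contains no proof of this statement: it is quoted as a known result of Ayala, Ferrandiz and Montes \cite{Ayal91}, with the measurability of the support map supplied by \cite[Prop.~8.16]{Molc05}. So your proposal can only be compared against the cited argument, and on its own merits it is essentially a correct reconstruction of it. Your deterministic lemma is right and is the heart of the matter: for closed $F$ one has $\mathrm{supp}(\ell(F\cap\cdot))\subseteq F$ (closedness gives a ball of zero coverage mass around any $x\notin F$), and $\ell\bigl(F\setminus \mathrm{supp}(\ell(F\cap\cdot))\bigr)=0$ because a locally finite Borel measure on the second-countable space $\R^d$ vanishes off its support (the complement of the support is covered by countably many zero-mass balls, by the Lindel\"of property and countable subadditivity --- this is worth spelling out). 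Together these give the pathwise identity of the coverage measures of $X$ and of $\mathrm{supp}(\Psi)$, and the two implications then follow as you describe, with \cite[Prop.~8.16]{Molc05} making $\mathrm{supp}(\Psi)$ a legitimate random closed set whose law is the pushforward of the law of $\Psi$.

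The one genuine soft spot is that you use two different readings of ``recoverable'', one in each direction. For sufficiency you read it as ``the law of $X$ is produced from the law of $\Psi$ by the canonical support procedure''; for necessity you read it as ``the law of $X$ is the unique random-closed-set law whose coverage measure has the law of $\Psi$''. These are not equivalent, and the second, taken literally over all random closed sets, would make the sufficiency direction false: the deterministic set $X=B(0,1)$ satisfies $X=\mathrm{supp}(\Psi)$, yet $Y=B(0,1)\cup\{y_0\}$ for a point $y_0$ outside the ball has the identical coverage measure and a different law, so under that reading no law is ever a unique preimage. Since the theorem's statement is itself informal, this is a defect you inherit rather than create, but a careful write-up should fix a single formalization and use it in both directions; for instance, call the law of $X$ recoverable if it coincides with the law of every random closed set which has the same coverage-measure law and which is distributed as the support of its own coverage measure. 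Your lemma shows both that $\mathrm{supp}(\Psi)$ lies in this competing class and that this class is exactly the one on which the support procedure is exact, and with that convention both of your implications go through verbatim.
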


From now on, assume that $X$ is stationary. Then the {\em hitting intensity\/}
\cite{StoyOhse82} is defined as
\[
T_{12}(t) = \EE \left[ \frac{1}{\ell(B)} 
  \int_B  1\{ X_2 \cap B(x,t) \neq \emptyset \} \, d\Psi_1(x) \right] 
\]
where $B$ is any bounded Borel set of positive volume $\ell(B)$ and
$B(x,t)$ is the closed ball centred at $x\in\R^d$ with radius $t\geq 0$.
The definition does not depend on the choice of $B$.
The hitting intensity is similar in spirit to another classic statistic,
the {\em empty space function\/} \cite{Math75} defined by
\[
F_2(t) = \PP( X_2 \cap B(x,t) \neq \emptyset ).
\]
 The related {\em cross spherical contact 
distribution\/} can be defined as
\[
H_{12}(t) = \PP( X_2 \cap B(x,t) \neq \emptyset | x \in X_1 )
\]
in analogy to the classical univariate definition \cite{Chiu13}.
Again, the definitions do not depend on the choice of $x\in\R^d$ due to
the assumed stationarity.

In order to relate $T_{12}$ and $F_2$ to our $J_{12}$ statistic, we need the 
concept of `scaling'. Let $s>0$ be a scalar. Then the scaling of $X$ by
$s$ results in $sX = (sX_1, s X_2)$ where $sX_i = \{ sx: x \in X_i \}$.

\begin{thm}
\label{t:rcs}
Let $X = (X_1, X_2)$ be a stationary bivariate random closed set 
with strictly positive volume fractions $p_1(0,i) = \PP( 0 \in X_i)$, 
$i=1,2$. Then the associated random coverage measure $\Psi$ is 
coverage-reweighted moment stationary and the following hold.
\begin{enumerate}
\item The cross statistics are 
\begin{eqnarray*}
K_{12}(t) & =  & 
\frac{ \EE \left( \ell( X_2 \cap B(0,t) ) | 0 \in X_1 \right ) }{p_1(0,2) }; \\
J_{12}(t) & = &
\frac{ 
\EE \left(
 1\{ 0 \in X_1 \} 
\exp\left[ -  \ell( X_2 \cap B(0,t)) / p_1(0,2) \right]  \right)
}{
p_1(0,1) \, \EE \left(
   \exp\left[ -  \ell(X_2 \cap B(0,t)) / p_1(0,2)\right ] \right) 
}.
\end{eqnarray*}
\item Use a subscript $sX$ to denote that the statistic is evaluated
for the scaled random closed set $sX$ and let $u_t^0$ be as in 
Theorem~\ref{t:j12}. Then
\[
 \lim_{s\to\infty} L^{(0,1)} (s^d u_{t}^0 ) =
\frac{1 - T_{12}(t)}{p_1(0,1)}
\]
and, for $t>0$,
\[
\lim_{s\to\infty} J_{12; sX}(st) = \frac{
\PP( X_2 \cap B(0,t) = \emptyset | 0 \in X_1 )
}{
\PP( X_2 \cap B(0,t) = \emptyset )
} =
\EE \left(
\frac{ 1\{ 0 \in X_1 \} }{ p_1(0,1) } | X_2 \cap B(0,t) = \emptyset \right)
\]
whenever $\PP(X_2 \cap B(0,t) = \emptyset) \neq 0$.
\end{enumerate}
\end{thm}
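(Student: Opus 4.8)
The plan rests on a single identity. For the coverage measure, Fubini's theorem turns $\EE[\Psi_{i_1}(B_1)\cdots\Psi_{i_k}(B_k)]$ into $\int_{B_1}\cdots\int_{B_k}\PP(x_1\in X_{i_1},\dots,x_k\in X_{i_k})\,dx_1\cdots dx_k$, so the $k$-point coverage function is the joint coverage probability $p_k((x_1,i_1),\dots,(x_k,i_k))=\PP(x_1\in X_{i_1},\dots,x_k\in X_{i_k})$. Stationarity of $X$ makes every such probability translation invariant, hence so are the cumulant densities $\xi_k$; moreover $p_1(x,i)=p_1(0,i)>0$ is constant and thus bounded away from zero. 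This gives coverage-reweighted moment stationarity. For the cross $K$-function I would feed this into Lemma~\ref{l:Kpalm} (equivalently Theorem~\ref{t:palm} with $k=1$, $i=1$, $i_1=2$, $B_1=B(0,t)$); since $p_1(\cdot,2)\equiv p_1(0,2)$, the Palm integral collapses to $\frac{1}{p_1(0,1)p_1(0,2)}\int_{B(0,t)}\PP(0\in X_1,x\in X_2)\,dx$, and pulling $1\{0\in X_1\}$ out of the $x$-integral rewrites it as $\frac{1}{p_1(0,2)}\EE[\ell(X_2\cap B(0,t))\mid 0\in X_1]$.

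For the cross $J$-function I would start from Theorem~\ref{t:j12} at $a=0$. The denominator is immediate, $L(u_t^0)=\EE\exp[-\ell(X_2\cap B(0,t))/p_1(0,2)]$, since $\int u_t^0\,d\Psi=\Psi_2(B(0,t))/p_1(0,2)$. The numerator needs the Palm--Laplace functional rewritten as an ordinary expectation. Applying the Campbell--Mecke formula~(\ref{e:CM}) to $g((x,1),\Psi)=1_A(x)\exp[-\Psi_2(B(x,t))/p_1(0,2)]$, using $d\Psi_1(x)=1\{x\in X_1\}\,dx$ on the left, and comparing both integrals over arbitrary bounded Borel $A$, yields for almost every $x$ — and hence, by stationarity and the $a$-independence established in Theorem~\ref{t:j12}, at $x=0$ — the identity $L^{(0,1)}(u_t^0)=\frac{1}{p_1(0,1)}\EE[1\{0\in X_1\}\exp[-\ell(X_2\cap B(0,t))/p_1(0,2)]]$. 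Dividing numerator by denominator produces the stated $J_{12}(t)$; the series converge because the $p_k$ are bounded by $1$.

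For part~2 I would reuse this Campbell--Mecke conversion with $s^d u_t^0$ in place of $u_t^0$, giving $p_1(0,1)\,L^{(0,1)}(s^d u_t^0)=\EE[1\{0\in X_1\}\exp[-s^d\ell(X_2\cap B(0,t))/p_1(0,2)]]$. Dominated convergence (the integrand is bounded by $1$) sends the exponential, as $s\to\infty$, to $1\{\ell(X_2\cap B(0,t))=0\}$, so the limit equals $\PP(0\in X_1,\ell(X_2\cap B(0,t))=0)$, to be identified with $\PP(0\in X_1,X_2\cap B(0,t)=\emptyset)=p_1(0,1)-T_{12}(t)$ using $T_{12}(t)=\PP(0\in X_1,X_2\cap B(0,t)\neq\emptyset)$. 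For the scaling limit of $J$ I would first record the elementary identities $0\in sX_1\iff 0\in X_1$ and $\ell(sX_2\cap B(0,st))=s^d\ell(X_2\cap B(0,t))$, together with the scale-invariance of the volume fractions, so that the part-1 formula applied to $sX$ at radius $st$ is exactly $L^{(0,1)}(s^d u_t^0)/L(s^d u_t^0)$. The denominator $L(s^d u_t^0)\to\PP(X_2\cap B(0,t)=\emptyset)=1-F_2(t)$ by the same argument, and dividing the two limits gives the claimed empty-space ratio; the final equality then follows by recognising this ratio as the conditional expectation $\EE[1\{0\in X_1\}/p_1(0,1)\mid X_2\cap B(0,t)=\emptyset]$.

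The step I expect to be delicate — and where I would spend the real effort — is the identification of the Lebesgue-null limit set $\{\ell(X_2\cap B(0,t))=0\}$ with the topological empty-space event $\{X_2\cap B(0,t)=\emptyset\}$ appearing in $T_{12}$ and $F_2$. The limit sees $X_2$ only through $\Psi_2$, i.e.\ through $\mathrm{supp}(\Psi_2)$, whereas $T_{12}$ and $F_2$ are defined from $X_2$ itself; the two agree precisely under the recoverability identification $X_i=\mathrm{supp}(\Psi_i)$ of Theorem~\ref{t:ayala}, which I would take as the operative hypothesis. Granting it, a support point of $\Psi_2$ in the open ball forces every small enough neighbourhood to carry positive coverage, so $X_2\cap\mathrm{int}\,B(0,t)\neq\emptyset$ already implies $\ell(X_2\cap B(0,t))>0$; the only residual discrepancy is the grazing event in which $X_2$ meets the bounding sphere $\partial B(0,t)$ but not its interior. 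That event forces the first-hitting radius to equal $t$ exactly, an event of probability $F_2(t)-F_2(t^-)=0$ at every $t>0$ where the monotone function $F_2$ is continuous — all but countably many $t$ — which is the sense in which the $t>0$ statement is to be read. The remaining interchanges of limit and expectation are routine given the uniform bound by $1$.
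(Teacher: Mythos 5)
Your proposal is correct and follows the same route as the paper's proof: Robbins' theorem plus Fubini to identify the $k$-point coverage functions with the coverage probabilities $\PP(x_1\in X_{i_1};\dots;x_k\in X_{i_k})$, whence coverage-reweighted moment stationarity; Lemma~\ref{l:Kpalm} for $K_{12}$; the identification of the Palm distribution with conditioning on $\{0\in X_1\}$ for $J_{12}$ (the paper quotes \cite[p.~288]{Chiu13} here, whereas you re-derive it from the Campbell--Mecke formula (\ref{e:CM}) --- a harmless, more self-contained variant); and the scaling identities followed by a monotone-convergence limit for part~2.

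Two points where you deviate from the paper are worth recording. (i) The step you single out as delicate is exactly the step the paper elides: its proof concludes $L_X^{(0,1)}(s^du_{t;X})\to\PP(X_2\cap B(0,t)=\emptyset\mid 0\in X_1)$ ``by the monotone convergence theorem'', silently identifying $\{\ell(X_2\cap B(0,t))=0\}$ with $\{X_2\cap B(0,t)=\emptyset\}$. As you note, this identification is false for a general stationary $X$ with positive volume fractions (adjoin to $X_2$ an independent stationary segment process: every quantity built from $\Psi$ is unchanged, while $T_{12}$ and the empty-space probabilities change), so the support condition $X_i={\rm{supp}}(\Psi_i)$ of Theorem~\ref{t:ayala} is a genuine additional hypothesis; and even under it the grazing event $\{d(0,X_2)=t\}$ must be null, which your continuity-point argument handles correctly --- at an atom of $F_2$ the limit is $\PP(X_2\cap {\rm{int}}\, B(0,t)=\emptyset\mid 0\in X_1)$ and the stated equality genuinely fails. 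Your treatment is the more careful one. (ii) Your computation yields $\lim_{s} L^{(0,1)}(s^du_t^0)=\left(p_1(0,1)-T_{12}(t)\right)/p_1(0,1)$, not the printed $\left(1-T_{12}(t)\right)/p_1(0,1)$. With the paper's own definition of the hitting intensity, Robbins' theorem gives $T_{12}(t)=\PP(X_2\cap B(0,t)\neq\emptyset;\,0\in X_1)\leq p_1(0,1)$, so the printed expression can exceed $1$, which is impossible for a limit of Laplace functionals bounded by $1$; the paper's proof in fact derives precisely your identity and then mis-transcribes it in the final line. Your formula is the internally consistent one, but you should flag this correction to the statement explicitly rather than absorb it silently.
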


In words, the scaling limit of the cross $J$-function compares the 
empty space function to the cross spherical contact distribution.

\

\begin{proof}
First note that
\[
\mu^{(k)}( (B_1 \times \{ i_1 \}) \times \cdots \times (B_k \times \{ i_k \}) )
 = 
\EE\left( 
 \ell(X_{i_1} \cap B_1) \times \cdots \times \ell(X_{i_k} \cap B_k) 
\right), 
\]
which, by \cite[(4.14)]{Molc05} is equal to
\[
   \int_{B_1} \cdots \int_{B_k} \PP( x_1 \in X_{i_1}; \dots; x_k \in X_{i_k} ) \,
dx_1 \cdots dx_k.
\]
Here, $k\in\N$ and $B_1, \dots, B_k$ are Borel subsets of $\R^d$.
Hence, $\Psi$ admits moment measures of all orders and
the probabilities
\(
\PP( x_1 \in X_{i_1}; \dots; x_k \in X_{i_k} ) = 
p_k((x_1, i_1), \dots, (x_k, i_k))
\)
define the coverage functions. By assumption $p_1$ is bounded away
from zero, so the stationarity of $X$ implies that $\Psi$ is 
coverage-reweighted moment stationary.

Since by \cite[p.~288]{Chiu13}, the Palm distribution amounts to conditioning
on having a point of the required component at the origin, the expression
for the cross $K$-function follows from Lemma~\ref{l:Kpalm}. 

To see the effect of scaling on $J_{12}$, observe that since 
\[
\PP( x_1 \in sX_{i_1}; \cdots x_k \in sX_{i_k} ) = 
\PP( x_1 /s \in X;_{i_1} \cdots x_k/s \in X_{i_k} ),
\]
the $k$-point coverage probabilities of $sX$ are related to
those of $X$ by $p_{k; sX}( (x_1, i_1), \dots, (x_k, i_k) ) $ 
$ = p_X( (x_1/s, i_1), \dots, (x_k/s, i_k) )$.  Similarly, 
$\xi_{k; sX}((x_1, i_1), \dots, (x_k, i_k) ) = $
$\xi_{k;X}( (x_1/s, i_1), \dots, $ 
$ (x_k/s, i_k))$
and consequently $J^{(k)}_{12;sX}(t)$ $ = s^{dk} J^{(k)}_{12;X}(t/s)$. 
Also scaling the balls $B(0,t)$ by $s$ to fix the coverage fraction,
one obtains
\(
J^{(k)}_{12; sX}(st) = s^{dk} J^{(k)}_{12;X}(t).
\)
The numerator in the expression of $J_{12}$ in terms of Laplace 
functionals (cf.\ Theorem ~\ref{t:j12}) after such scaling reads as 
follows. Define
\[
u_{st; sX} =  \frac{ 1\{ (x,i) \in B(0, st) \times \{ 2 \} \} 
}{ p_{1; sX}(x, i) } = 
\frac{ 1\{ (x/s ,i) \in B(0, t) \times \{ 2 \} \} 
}{
p_{1; X}( x / s, i )}.
\]
Then 
\[
L_{sX}^{(0,1)}(u_{st;sX})  =  \EE \left( \exp\left[
  - \int_{B(0,st)} \frac{1\{ x \in sX_2\} }{ p_{1;sX}( x,2)} \, dx \right] |
0 \in sX \right) =  L_X^{(0,1)} (s^d u_{t; X} ).
\]
For $t>0$, as $s\to \infty$
\[
 L_X^{(0,1)} (s^d u_{t;X} ) \to
 \PP( X_2 \cap B(0,t) = \emptyset | 0 \in X_1 ) 
\]
by the monotone convergence theorem. 

Turning to $T_{12}(t)$, note that
\[
\EE\left[ 
\frac{1}{\ell(B)} \int_B 1\{ X_2 \cap B(x,t) \neq \emptyset; x \in X_1 \} \, dx
\right] = 
\frac{1}{\ell(B)} \int_B \PP( X_2 \cap B(x,t) \neq \emptyset; x \in X_1 ) \,dx
\]
by Robbins' theorem. Since the volume fractions are strictly positive, we 
may condition on having a point at any $x\in\R^d$, so that
\[
\PP( X_2 \cap B(x,t) \neq \emptyset; x \in X_1 )  =
\PP( X_2 \cap B(0,t) \neq \emptyset | 0 \in X_1 ) \PP( 0 \in X_1)
\]
upon using the stationarity of $X$. We conclude that 
\(
 L_X^{(0,1)} (s^d u_{t;X} ) \to ( 1 - T_{12}(t) ) / p_1(0,1)
\)
as claimed.

Finally, consider the effect of scaling on the denominator in 
(\ref{e:j12}). Now,
\[
L_{sX}(u_{st;sX})  =  
 \EE\left[ \exp( - \ell( sX_2 \cap B(0, st ) ) / p_1( 0, 2 )  
 \right] 
 =  
 L_X (s^d u_{t;X} ).
\]
For $t>0$, 
\[
\lim_{s\to\infty} L_X(s^d u_{t;X} ) = \PP( X_2 \cap B(0,t) = \emptyset )
\]
by the monotone convergence theorem. Combining numerator and
denominator, the theorem is proved.
\end{proof}

The case $t=0$ is special. Indeed, both the spherical contact distribution
and empty space function may have a `nugget' at the origin. In contrast,
$J_{12}(0) \equiv 1$.

Before specialising to germ-grain models, let us make a few remarks. 
First, note that the moment measures of $\Psi$ have a nice interpretation. 
Indeed, by Fubini's theorem, the $k$-point coverage function coincides 
with the $k$-point coverage probabilities of the underlying random closed
set. Moreover, since  $\mu^{(k)}((B \times \{ 1,\dots, n \})^k) \leq 
( n\ell(B) )^k$, the Zessin condition holds, cf.\ Theorem~\ref{t:zessin}.

Secondly, if $X_1$ and $X_2$ are independent, $J_{12}(t) \equiv 1$.
More generally, if $\ell( X_2 \cap B(0,t) )$ and $1\{ 0 \in X_1 \}$ are 
negatively quadrant dependent, $J_{12}(t) \geq 1$. If the two random 
variables are positively quadrant dependent, then $J_{12}(t) \leq 1$. 
A similar interpretation holds for the cross $K$-function: 
 if $\ell( X_2 \cap B(0,t) )$ and $1\{ 0 \in X_1 \}$ are 
negatively correlated, $K_{12}(t) \leq \kappa_d t^d$;
if the two random variables are positively correlated, then $K_{12}(t) 
\geq \kappa_d t^d$. 

\paragraph{Germ-grain models}

Let $N = (N_1, N_2)$ be a stationary bivariate point process. 
Placing closed balls of radius $r>0$ around each of the points 
defines a bivariate random closed set 
\[
(X_1, X_2) = ( U_r(N_1), U_r(N_2)),
\]
where, for every locally finite configuration $\phi\subseteq \R^d$
\[
U_r(\phi) = \bigcup_{x\in \phi} B(x, r).
\]

\begin{thm}
Let $N = (N_1, N_2)$ be a stationary bivariate point process and 
$X$ the associated germ grain model for balls of radius $r>0$. 
Write, for $x\in\R^d$, $t_1, t_2 \in \R^+$,
\[
F_N(t_1, t_2 ; x) = \PP ( d(0, X_1) \leq t_1; d(x, X_2) \leq t_2 )
\]
for the joint empty space function of $N$ at lag $x$ and let
$F_{N_i}$ be the marginal empty space function of $N_i$, $i=1,2$. 
If $F_{N_i}(r) > 0$ for $i=1, 2$, the random coverage measure $\Psi$ 
of $X$ is coverage-reweighted moment stationary with
\[
K_{12}(t)  =   \frac{1}{F_{N_1}(r) \, F_{N_2}(r) } \,
\int_{B(0,t)} F_N( r, r; x) \, dx 
\]
and, for $t>0$,
\[
\lim_{s\to\infty} J_{12; sX}(st)  = 
\frac{ F_{N_1}(r) - F_N(r, r+t; 0)}{ F_{N_1}(r) ( 1 - F_{N_2}(r+t) ) }
\]
whenever $F_{N_1}(r) > 0$ and $F_{N_2}(r+t) < 1$.
\end{thm}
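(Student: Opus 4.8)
The plan is to treat the germ-grain model as a special case of the coverage measure of a stationary random closed set and to read both identities off Theorem~\ref{t:rcs}, after translating every set-theoretic event for $X$ into an empty-space event for the point process $N$. The dictionary is purely geometric: because $X_i = U_r(N_i)$ is a union of closed balls of radius $r$, we have $x \in X_i$ exactly when $d(x, N_i) \leq r$, so that $\PP(0 \in X_i) = \PP(d(0,N_i) \leq r) = F_{N_i}(r)$, which is strictly positive by hypothesis. More generally, for $t>0$ the event $X_2 \cap B(0,t) = \emptyset$ coincides with $d(0,N_2) > r+t$, since the distance from the origin to a union of radius-$r$ balls equals $\max(0, d(0,N_2)-r)$. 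First I would apply Theorem~\ref{t:rcs} to conclude that $\Psi$ is coverage-reweighted moment stationary: stationarity of $X$ is inherited from that of $N$ because dilation by a fixed ball commutes with translations, and the volume fractions $p_1(0,i) = F_{N_i}(r)$ are positive.

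For the cross $K$-function I would start from the expression $K_{12}(t) = \EE(\ell(X_2 \cap B(0,t)) \mid 0 \in X_1)/p_1(0,2)$ furnished by part~1 of Theorem~\ref{t:rcs}. Writing the conditional expectation as
\[
\frac{1}{\PP(0 \in X_1)}\, \EE\left[ 1\{ 0 \in X_1 \} \int_{B(0,t)} 1\{ x \in X_2 \}\, dx \right]
\]
and interchanging expectation and integral (Robbins' theorem, exactly as in the proof of Theorem~\ref{t:rcs}), the numerator becomes $\int_{B(0,t)} \PP(0 \in X_1;\, x \in X_2)\, dx$. Each integrand translates, through the dictionary above, into $\PP(d(0,N_1)\leq r;\, d(x,N_2)\leq r) = F_N(r,r;x)$; substituting $\PP(0 \in X_1) = F_{N_1}(r)$ and $p_1(0,2) = F_{N_2}(r)$ then yields the stated formula for $K_{12}$.

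For the scaling limit of the cross $J$-function I would invoke the limit identity of part~2 of Theorem~\ref{t:rcs}, namely that for $t>0$
\[
\lim_{s\to\infty} J_{12;sX}(st) = \frac{\PP(X_2 \cap B(0,t) = \emptyset \mid 0 \in X_1)}{\PP(X_2 \cap B(0,t) = \emptyset)}
\]
whenever the denominator is nonzero. By the geometric equivalence the denominator equals $\PP(d(0,N_2) > r+t) = 1 - F_{N_2}(r+t)$, which is positive exactly when $F_{N_2}(r+t) < 1$. For the numerator I would condition on $\{0 \in X_1\}$ and use inclusion--exclusion:
\[
\PP(0 \in X_1;\, X_2 \cap B(0,t) = \emptyset) = \PP(d(0,N_1)\leq r) - \PP(d(0,N_1)\leq r;\, d(0,N_2) \leq r+t) = F_{N_1}(r) - F_N(r,r+t;0),
\]
while $\PP(0 \in X_1) = F_{N_1}(r)$. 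Dividing gives the claimed ratio $(F_{N_1}(r) - F_N(r,r+t;0)) / (F_{N_1}(r)(1 - F_{N_2}(r+t)))$.

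Once the dictionary is in place the computation is essentially bookkeeping; the only point needing care is the geometric equivalence $X_2 \cap B(0,t) = \emptyset \Leftrightarrow d(0,N_2) > r+t$, which is valid only for $t>0$. This is precisely why the statement restricts to $t>0$: at $t=0$ the nugget phenomenon noted after Theorem~\ref{t:rcs} intervenes and $J_{12}(0) \equiv 1$. I would therefore make explicit that the two hypotheses $F_{N_1}(r)>0$ and $F_{N_2}(r+t)<1$, together with $t>0$, are exactly what render the conditioning and the translation well defined, the remainder being a direct specialisation of the two general formulas in Theorem~\ref{t:rcs}.
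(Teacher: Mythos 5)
Your proposal is correct and takes essentially the same route as the paper: both reduce everything to Theorem~\ref{t:rcs} and translate the coverage and hitting events of $X = U_r(N)$ into empty-space events for $N$ (the paper phrases the numerator of the $J$-limit via the hitting intensity $T_{12}$, which amounts to the same inclusion--exclusion you perform). One minor inaccuracy in your closing remark: with closed balls the equivalence $X_2 \cap B(0,t) = \emptyset \Leftrightarrow d(0,N_2) > r+t$ in fact holds for all $t \geq 0$, and the restriction to $t>0$ is instead forced by the monotone-convergence scaling limit in part~2 of Theorem~\ref{t:rcs}, where the nugget phenomenon at $t=0$ arises.
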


Hence, the cross statistics of the germ-grain model can be
expressed entirely in terms of the joint empty space function
of the germ processes; the radius of the grains translates itself
in a shift.

\

\begin{proof}
Since the coverage probabilities 
\[
p_1(0,i) = \PP( 0 \in X_i ) = \PP( d(0, N_i) \leq r ) = F_{N_i}(r)
\]
are strictly positive by assumption, Theorem~\ref{t:rcs} implies
that $\Psi$ is coverage-reweighted moment stationary. 
By stationarity,
\[
K_{12}(t) = \frac{1}{F_{N_1}(r) \, F_{N_2}(r) }
 \int_{B(0,t)} \PP( 0\in X_1; x\in X_2)  \, dx.
\]
The observation that
\[
\PP( 0\in X_1; x\in X_2) = \PP( d(0, N_1) \leq r; d(x, N_2) \leq r )
= F_N(r, r; x)
\]
which implies the claimed expression for the cross $K$-statistic.
Furthermore,
\[
 \PP( {X_2} \cap B(0,t) \neq \emptyset ) = 
\PP( d(0, N_2) \leq r + t ) = F_{N_2}(r+t)
\]
and
\begin{eqnarray*}
\frac{1-T_{12}(t)}{p_1(0,1)} & =  &
\PP( X_2 \cap B(0,t) = \emptyset | 0 \in X_1 ) 
=  \frac{ \PP(N_1 \cap B(0,r) \neq \emptyset; N_2 \cap B(0,r+t) = \emptyset)
}{
\PP( N_1 \cap B(0,r) \neq \emptyset )} \\
& = & \frac{F_{N_1}(r) - F_N(r, r+t; 0)}{F_{N_1}(r) }
\end{eqnarray*}
can be expressed in terms of the joint empty space function of
$(N_1, N_2)$. The claim for the scaling limit of $J_{12}$ follows
from Theorem~\ref{t:rcs}.
\end{proof}

For the special case $t=0$, note that although $J_{12}(0) = 1$, in the limit
\(
F_{N_1}(r) - F_N(r, r; 0) 
\)
is not necessarily equal to 
\(
 F_{N_1}(r) - F_{N_1}(r) F_{N_2}(r)
\)
unless $N_1$ and $N_2$ are independent.

The stationarity assumption seems required. Consider for example
a Boolean model \cite{Molc97} obtained as the union set $X$ of 
closed balls of radius $r>0$ centred at the points of a Poisson 
process with intensity function $\lambda(\cdot)$. For this model, 
first and second order $k$-point coverage functions are given by
\begin{eqnarray*}
p_1(x) & = & 1 - \exp\left[ - \int \lambda(z) \, 
 1\{ z \in B(x,r) \} \, dz \right]; \\
p_2(x, y) & = & p_1(x) + p_1(y) - 1 + \exp\left[
 - \int \lambda(z) \, 1\{ z \in B(x,r) \cup B(y,r) \} \, dz 
\right].
\end{eqnarray*}
Hence
\(
 \xi_2( x, y) 
\)
is not necessarily invariant under translations
contrary to the claim in \cite{Gall14}. 


Even in the stationary case, that is, for constant $\lambda(\cdot)$,
the Laplace transform 
\(
L( u_t^0 ) = \EE \exp[ - \ell( X \cap B(0,t) ) / p_1(0) ]
\)
is intractable, being the partition function of an area-interaction 
process with interaction parameter $\log\gamma =  1 / p_1(0)$ and range 
$r$ observed in the ball $B(0,t)$  \cite{LiesBadd96}.

\subsection{Random field models}

Inhomogeneity may be introduced into the coverage measure associated
to a random closed set by means of a random weight function.
Let $X = (X_1, X_2)$ be a bivariate random closed set and 
$\Gamma = (\Gamma_1, \Gamma_2)$ a bivariate random field
taking almost surely non-negative values. Suppose that $X$ and $\Gamma$
are independent and set
\(
\Psi = (\Psi_1, \Psi_2) 
\)
where 
\begin{equation}
\label{e:Ballani}
\Psi_i(B) = \int_{B}  \Gamma_i(x) 1\{ x\in X_i \} \, dx.
\end{equation}
The univariate case was dubbed a random field model by Ballani
{\em et al.\/} \cite{Ball12} for which, under the assumption that both $X$ and
$\Gamma$ are  stationary, \cite{Koub16} employed the $R_{12}$-statistic
for testing purposes.

\begin{thm}
Let (\ref{e:Ballani}) be a bivariate random field model and suppose that
$\Gamma$ admits a continuous version and that its associated random measure
is coverage-reweighted moment stationary. Furthermore, assume that
$X$ is stationary and has strictly positive volume fractions.
Then the random field model is coverage-reweighted moment stationary 
and, writing $c_{12}^X$ respectively $c_{12}^\Gamma$ for the
coverage-reweighted cross covariance functions of $X$ and $\Gamma$, 
the following hold:
\begin{eqnarray*}
K_{12}(t) & = & \int_{B(0,t)} ( c_{12}^X(0,x) + 1 ) \, 
( c_{12}^\Gamma(0,x) + 1 ) \, dx; \\
J_{12}(t) & = & \frac{
\EE \left( \Gamma_1(0) 
\exp\left[
- \frac{1}{\PP( 0 \in X_2 )} \int_{B(0,t)\cap X_2}
  \frac{\Gamma_2(x)}{\EE\Gamma_2(x)} \, dx \right]
| 0 \in X_1 \right)
}{
\EE \Gamma_1(0) \, \EE\exp\left[
- \frac{1}{\PP( 0 \in X_2 )} \int_{B(0,t)\cap X_2}
  \frac{\Gamma_2(x)}{\EE\Gamma_2(x)} \, dx 
\right]
}.
\end{eqnarray*}
\label{t:random-field}
\end{thm}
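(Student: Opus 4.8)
The plan is to exploit the independence of $X$ and $\Gamma$ to factorise the coverage functions of $\Psi$, and then to feed this factorisation into the characterisations of $K_{12}$ and $J_{12}$ already established. First I would compute the $k$-point coverage function. Writing $d\Psi_i(x) = \Gamma_i(x)\,1\{x \in X_i\}\,dx$, expanding $\EE[\Psi_{i_1}(B_1)\cdots\Psi_{i_k}(B_k)]$, applying Tonelli (legitimate since $\Gamma\geq 0$) and using independence of $X$ and $\Gamma$, one obtains
\[
p_k^\Psi((x_1,i_1),\dots,(x_k,i_k)) = \EE\Bigl[\prod_{j=1}^k \Gamma_{i_j}(x_j)\Bigr]\,\PP(x_1 \in X_{i_1},\dots,x_k \in X_{i_k}) =: p_k^\Gamma\,p_k^X,
\]
the product of the $k$-th moment density of the random measure associated with $\Gamma$ and the $k$-point coverage probability of $X$; the continuous-version hypothesis ensures these are genuine densities. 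Dividing by $p_1^\Psi(x_j,i_j) = \EE\Gamma_{i_j}(x_j)\,\PP(x_j \in X_{i_j})$ shows that the normalised moment density of $\Psi$ is the product of those of $\Gamma$ and $X$. Each factor is translation invariant (the $\Gamma$-factor by the assumed coverage-reweighted moment stationarity of $\Gamma$, the $X$-factor by stationarity of $X$), hence so is the normalised moment density of $\Psi$, and therefore, by the recursive definition of the $\xi_k$, so are the cumulant densities $\xi_k^\Psi$. As $p_1^\Psi$ is a product of two quantities bounded away from zero, $\Psi$ is coverage-reweighted moment stationary.

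For the cross $K$-function I would specialise to $k=2$. Since $\{1,2\}$ has only two partitions, $1+\xi_2$ equals the normalised second moment density, so the factorisation gives
\[
1 + \xi_2^\Psi((0,1),(x,2)) = \bigl(1 + c_{12}^\Gamma(0,x)\bigr)\bigl(1 + c_{12}^X(0,x)\bigr),
\]
with $c_{12}^\Gamma$ and $c_{12}^X$ the cross covariances $\xi_2^\Gamma$ and $\xi_2^X$. Integrating over $B(0,t)$ and invoking Definition~\ref{d:k12} gives the stated $K_{12}(t)$.

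The cross $J$-function is the substantive part. I would invoke Theorem~\ref{t:j12} to write $J_{12}(t) = L^{(0,1)}(u_t^0)/L(u_t^0)$ with $u_t^0(x,i) = 1\{x \in B(0,t)\}\,1\{i=2\}/p_1^\Psi(x,2)$, and compute the two functionals separately. The denominator is an ordinary expectation: substituting $d\Psi_2 = \Gamma_2\,1\{\cdot \in X_2\}\,dx$ and pulling out the constant $\PP(x\in X_2)=\PP(0\in X_2)$ collapses the domain to $B(0,t)\cap X_2$ and reproduces the $\EE\exp[\cdots]$ factor of the claim. For the numerator I would identify the Palm distribution via the Campbell--Mecke formula (\ref{e:CM}): choosing $g((x,1),\Psi)=h(x)F(\Psi)$ and matching integrands yields, for $F(\Psi)=\exp[-\int u_t^0\,d\Psi]$,
\[
\EE^{(0,1)}[F(\Psi)] = \frac{\EE\bigl[F(\Psi)\,\Gamma_1(0)\,1\{0\in X_1\}\bigr]}{\EE\Gamma_1(0)\,\PP(0\in X_1)},
\]
so that $P^{(0,1)}$ is the $\Gamma_1(0)\,1\{0\in X_1\}$-reweighting of $P$. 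Since $F(\Psi)$ involves $X_2$ only through the integration domain, conditioning on $X$ and using independence of $X$ and $\Gamma$ factors this as $\EE\Gamma_1(0)^{-1}\,\EE[\Gamma_1(0)\exp(\cdots)\mid 0\in X_1]$, i.e.\ the numerator of the claimed $J_{12}(t)$; dividing by the denominator completes the identity.

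The main obstacle I anticipate is the rigorous treatment of the numerator. The Campbell--Mecke identity pins down $\EE^{(a,1)}$ only for $\mu^{(1)}$-almost every base point, so I would appeal to translation invariance of the integrand to evaluate at the origin, exactly as in the proofs of Theorems~\ref{t:palm} and~\ref{t:rcs}. Separately, invoking Theorem~\ref{t:j12} requires its absolute-convergence hypotheses; here the bound $0\le\exp[-\int u_t^0\,d\Psi]\le 1$ together with moment control on $\Gamma$ should supply them. By comparison, the factorisation of $p_k^\Psi$, the $K_{12}$ identity, and coverage-reweighted moment stationarity are routine once the product form is in hand.
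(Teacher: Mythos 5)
Your proposal is correct and follows essentially the same route as the paper: factorise the $k$-point coverage functions of $\Psi$ as $p_k^X \cdot p_k^\Gamma$ via independence of $X$ and $\Gamma$, deduce coverage-reweighted moment stationarity and the $K_{12}$ formula from the $k=2$ case, and identify the Palm distribution $P^{(x,1)}$ as the $\Gamma_1(x)1\{x\in X_1\}$-weighted distribution through the Campbell--Mecke formula before plugging into Theorem~\ref{t:j12}. The only differences are presentational (you compute $L(u_t^0)$ explicitly where the paper leaves it implicit), so no further comment is needed.
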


\begin{proof}
First, with $p_k^X$ for the $k$-point coverage probabilities of $X$,
\[
\EE\left[ 
  \Psi_1(B_1) \cdots \Psi_1(B_k) \Psi_2(B_{k+1}) \cdots \Psi_2(B_{k+l})
\right] =
\]
\[
\EE\left[ \int_{B_1} \cdots \int_{B_k} \int_{B_{k+1}} \cdots \int_{B_{k+l}} 
\left( \prod_{i=1}^k  1\{ x_i \in X_1 \} \Gamma_1(x_i) \, dx_i \right)
\left( \prod_{i=1}^l 1\{ y_i \in X_2 \} \Gamma_2( y_i) \, dy_i \right)
 \right] = 
\]
\[
 \int_{B_1} \cdots \int_{B_k} \int_{B_{k+1}} \cdots \int_{B_{k+l}}  
p^X_{k+l}( (x_1, 1), \dots, (x_k,1), (x_{k+1}, 2), \dots, (x_l, 2) ) \times
\]
\[
\times
 \EE\left[ \prod_{i=1}^k \Gamma_1(x_i) \prod_{i=1}^l \Gamma_2(y_i) \right]
   dx_1 \cdots dx_k \, dy_1 \cdots dy_l
\]
by the monotone convergence theorem and the independence of $X$ and 
$\Gamma$ (recalling the moment measures are locally finite). 
Hence, $\mu^{(k+l)}$ is absolutely continuous 
and its Radon--Nikodym derivative $p_{k+l}$ satisfies
\[
\frac{p_{k+l}( (x_1, 1), \dots, (x_k,1), (x_{k+1}, 2), \dots, (x_{k+l}, 2) )}{
p_1(x_1, 1) \cdots p_1(x_k,1)\, p_1( x_{k+1},2) \cdots p_1(x_{k+l}, 2) } = 
\]
\[
= \frac{p^X_{k+l}( (x_1, 1), \dots, (x_k,1), (x_{k+1}, 2), \dots, (x_{k+l}, 2) )}{
p^X_1(x_1, 1) \cdots p^X_1(x_k,1) \, p^X_1( x_{k+1},2) \cdots p^X_1(x_{k+l}, 2) } 
\, \frac{
 \EE\left[ \prod_{i=1}^k \Gamma_1(x_i) \prod_{i=1}^l \Gamma_2(y_i) \right] }{
 \prod_{i=1}^k \EE \Gamma_1(x_i) \prod_{i=1}^l \EE \Gamma_2(y_i) }.
\]
Here $p^X_{k+l}$ denotes the $k+l$-point coverage probability of $X$.
Since $X$ is stationary and $\Gamma$ coverage-reweighted moment stationary, 
translation invariance follows. Moreover, the function
\[
p_1(x, i) = p_1^X(x,i) \, \EE \Gamma_i(x) = p_1^X(0,i) \, \EE \Gamma_i(x)
\]
is bounded away from zero because $X$ has strictly positive volume fractions
and $\Gamma$ is coverage-reweighted moment stationary by assumption.

For $k=2$ we have
\[
\xi_2( (x,1), (y,2) ) = \frac{ p_2^X( (x,1), (y,2) ) }{p_1^X( x,1) \, p_1^X(y,2)}
\frac{ \EE\left[ \Gamma_1(x) \Gamma_2(y) \right]}{\EE \Gamma_1(x) \,
\EE \Gamma_2(y) }  - 1
\]
from which the claimed form of the cross $K$-statistic follows.
For the cross $J$-statistic, one needs the 
Palm distribution. By the Campbell--Mecke formula, for any 
Borel set $A\subseteq \R^d$,  $i=1,2$, and any measurable $F$,
\[
\int_A \PP^{(x,i)}(F)\,  p_1( x,i) \, dx =
\EE\left[
\int_{A\cap X_i} 1_F(\Psi) \Gamma_i(x)  \, dx 
\right] =
 \int_A \frac{\EE\left[ 1_F(\Psi) \Gamma_i(x) |  x \in X_i \right] }{
  \EE \Gamma_i(x) } \, p_1(x,i) \, dx
\]
by Fubini's theorem. Therefore, for $p_1$-almost all $x$ and  $i = 1,2$
\[
\PP^{(x,i)}(F) = \frac{ \EE\left[ \Gamma_i(x) 1_F(\Psi) | x\in X_i \right] }{
\EE \Gamma_i(x) }
\]
and the proof is complete.
\end{proof}

Note that if the covariance functions of both the random closed set
$X$ and the random field $\Gamma$ are non-negative, $K_{12}(t) \geq
\kappa_d t^d$; if there is non-positive correlation, $K_{12}(t) \leq 
\kappa_d t^d$. Similarly, if the random variables
$\Gamma_1(0) 1\{ 0 \in X_1 \}$ and 
\[
 \int_{B(0,t)\cap X_2} \frac{\Gamma_2(x)}{\EE\Gamma_2(x)} \, dx 
\]
are positively quadrant dependent, $J_{12}(t) \leq 1$ and, reversely,
$J_{12}(t) \geq 1$ when they are negatively quadrant dependent.

\paragraph{Log-Gaussian random field model}

A flexible choice is to take $\Gamma_i = e^{Z_i}$ for some bivariate 
Gaussian random field $Z = (Z_1, Z_2)$ with mean functions $m_i$, $i=1,2$
and (valid) covariance function matrix $(c_{ij})_{i,j\in \{1, 2\}}$. 
Since $\Psi$ involves integrals 
over $\Gamma$,  conditions on $m_i$ and $c_{ii}$ are needed. Therefore, 
we shall assume that $m_1$ and $m_2$ are continuous, bounded functions,
for example taking into account covariates.
For the covariance function, sufficient conditions
are given in \cite[Theorem~3.4.1]{Adle81}. Further details and
examples can be found in \cite{Moll98} or in \cite[Section~5.8]{MollWaag04}. 

\begin{thm}
Consider a bivariate random field model for which 
$\Gamma$ is log-Gaussian with bounded continuous mean functions and 
translation invariant covariance functions $\sigma^2_{ij} r_{ij}( \cdot )$ 
such that $\Gamma$ admits a continuous version.  Furthermore, assume that 
$X$ is stationary and has strictly positive volume fractions.
Then the random field model is coverage-reweighted moment stationary 
and the following hold. The cross $K$-function is equal to
\[
K_{12}(t)  =   \int_{B(0,t)} ( 1 + c_{12}^X(0,x) ) \,
\exp\left[ \sigma_{12}^2 \, r( x ) \right] dx
\]
where $c_{12}^X$ is the coverage-reweighted cross covariance function
of $X$;  the cross $J$-statistic reads
\begin{eqnarray*}
J_{12}(t) & = & \frac{ \EE \left( \exp\left[
Y_1(0) - \frac{1}{\PP(0\in X_2)}
\int_{B(0,t) \cap X_2} e^{Y_2(x)} dx 
\right] | 0 \in X_1 \right) }{
\EE \exp\left[
- \frac{1}{\PP(0\in X_2)}
\int_{B(0,t) \cap X_2} e^{Y_2(x)} dx 
\right]  }
\\
& = & \frac{ \EE \left[ \exp\left( 
- \frac{1}{\PP(0\in X_2)}
\int_{B(0,t) \cap X_2} e^{Y_2(x) + \sigma^2_{12} r_{12}(x) } dx 
\right] | 0 \in X_1 \right) }{
\EE \exp\left[
- \frac{1}{\PP(0\in X_2)}
\int_{B(0,t) \cap X_2} e^{Y_2(x)} dx 
\right]  }
\end{eqnarray*}
where $Y_i(x) = Z_i(x) - m_i(x) - 0.5 \sigma^2_{ii}$.
\end{thm}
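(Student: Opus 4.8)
The plan is to derive both formulae as specialisations of Theorem~\ref{t:random-field}, the general random field model result, once the relevant log-normal moments have been computed. The first step is to check that the random measure associated with $\Gamma = (e^{Z_1}, e^{Z_2})$ is coverage-reweighted moment stationary, as required to invoke Theorem~\ref{t:random-field}. For this I would write out the coverage functions via the Gaussian moment generating function,
\[
p_k^\Gamma((x_1,i_1),\dots,(x_k,i_k)) = \EE\Big[\prod_{j=1}^k \Gamma_{i_j}(x_j)\Big]
= \exp\Big[\sum_{j} \big(m_{i_j}(x_j) + \tfrac12\sigma^2_{i_j i_j}\big) + \sum_{j<l}\sigma^2_{i_j i_l} r_{i_j i_l}(x_j - x_l)\Big],
\]
and observe that the coverage-reweighted ratio $p_k^\Gamma / \prod_j p_1^\Gamma$ collapses to $\exp[\sum_{j<l}\sigma^2_{i_j i_l} r_{i_j i_l}(x_j-x_l)]$, which depends only on the lags. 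Hence every reweighted cumulant density $\xi_k^\Gamma$ is translation invariant; boundedness of the mean functions $m_i$ makes $\EE\Gamma_i$ bounded away from zero, and together with the stationarity and positive volume fractions of $X$ the hypotheses of Theorem~\ref{t:random-field} are met.

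The cross $K$-function is then immediate. Starting from $K_{12}(t) = \int_{B(0,t)}(c_{12}^X(0,x)+1)(c_{12}^\Gamma(0,x)+1)\,dx$, I only need the coverage-reweighted cross covariance of $\Gamma$; the log-normal computation gives $\EE[\Gamma_1(0)\Gamma_2(x)]/(\EE\Gamma_1(0)\,\EE\Gamma_2(x)) = \exp[\sigma_{12}^2 r_{12}(x)]$, so $c_{12}^\Gamma(0,x)+1 = \exp[\sigma_{12}^2 r_{12}(x)]$ and substitution yields the stated expression.

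For the cross $J$-function I would begin from the general formula in Theorem~\ref{t:random-field} and substitute $\Gamma_2(x)/\EE\Gamma_2(x) = e^{Y_2(x)}$ and $\Gamma_1(0)/\EE\Gamma_1(0) = e^{Y_1(0)}$ with $Y_i(x) = Z_i(x) - m_i(x) - 0.5\sigma^2_{ii}$; pulling the deterministic constant $\EE\Gamma_1(0)$ inside the conditional expectation gives the first displayed form directly. The passage to the second form is the crux. I would condition on the realisation of $X$, which is legitimate since $X$ and $\Gamma$ are independent, so that $B(0,t)\cap X_2$ becomes a fixed set and $\int_{B(0,t)\cap X_2} e^{Y_2(x)}\,dx$ is a nonnegative measurable functional of the Gaussian field. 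Because $\EE e^{Y_1(0)} = \exp[-\tfrac12\sigma_{11}^2 + \tfrac12\sigma_{11}^2] = 1$, the variable $e^{Y_1(0)}$ is a mean-one likelihood ratio, and the Gaussian tilting (Cameron--Martin shift) identity $\EE[e^{Y_1(0)} f(\{Y_2(x)\})] = \EE[f(\{Y_2(x) + \mathrm{Cov}(Y_1(0), Y_2(x))\})]$ applies, with $\mathrm{Cov}(Y_1(0),Y_2(x)) = \mathrm{Cov}(Z_1(0),Z_2(x)) = \sigma_{12}^2 r_{12}(x)$. Shifting $Y_2(x)\mapsto Y_2(x)+\sigma_{12}^2 r_{12}(x)$ converts $e^{Y_2(x)}$ into $e^{Y_2(x)+\sigma_{12}^2 r_{12}(x)}$, and integrating back over $X$ recovers the second expression.

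The main obstacle is making the tilting step rigorous: the change of measure must be applied to the entire trajectory $\{Y_2(x): x\in B(0,t)\}$ rather than at a single point, which relies on the continuous-version assumption so that the integral is a well-defined functional of the Gaussian process, on Fubini's theorem to interchange the Gaussian expectation with the spatial integral, and on the independence of $X$ and $\Gamma$ to ensure that conditioning on the $X$-measurable event $\{0\in X_1\}$ leaves the Gaussian shift unaffected. Everything else amounts to bookkeeping with log-normal moment generating functions, and the absolute convergence needed for the Laplace representation is inherited from Theorem~\ref{t:j12} via Theorem~\ref{t:random-field}.
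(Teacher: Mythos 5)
Your proposal is correct, and for the first two claims (coverage-reweighted moment stationarity and the cross $K$-function) it follows the paper essentially verbatim: both compute the joint log-normal moments via the Gaussian moment generating function, observe that the coverage-reweighted ratios depend only on lags, and read off $c_{12}^\Gamma(0,x)+1 = \exp[\sigma^2_{12} r_{12}(x)]$. Where you genuinely diverge is the crux step, the second expression for $J_{12}$. The paper stays inside its moment-measure machinery: it factorises the ratio of the higher-order coverage functions $p_{1+k+l}((a,1),(x_1,1),\dots,(x_{k+l},2))$ over products of first-order ones, isolates the cross-correlation factors $\prod_i e^{\sigma^2_{12} r_{12}(x_{k+i}-a)}$, and then resums the (absolutely convergent) series expansion of $L^{(a,1)}(u_t^a)$ from Theorem~\ref{t:j12} to identify it with the conditioned Laplace functional evaluated at the tilted test function $\tilde u_t^a(x,i) = 1\{(x,i)\in B(a,t)\times\{2\}\}\exp[\sigma^2_{12}r_{12}(x-a)]/p_1(x,2)$. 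You instead condition on $X$ (legitimate by independence), treat $e^{Y_1(0)}$ as a mean-one Radon--Nikodym density, and apply the Cameron--Martin exponential-tilting identity to shift the whole trajectory $Y_2(\cdot)\mapsto Y_2(\cdot)+\sigma^2_{12}r_{12}(\cdot)$, then reintegrate over $X$ given $0\in X_1$. Both routes rest on the same algebraic fact about Gaussian cross-moments, but they lift it to the functional level differently: the paper's term-by-term resummation needs the absolute-convergence hypotheses already built into Theorems~\ref{t:j12} and \ref{t:random-field}, whereas your change-of-measure argument avoids series manipulations altogether, needs only measurability of the path functional (supplied by the continuous-version assumption) plus a monotone-class extension of the finite-dimensional tilting, and makes transparent \emph{why} the Palm conditioning acts as a deterministic shift of the Gaussian field --- in effect you are proving, in this setting, the analogue of the Coeurjolly--M{\o}ller--Waagepetersen result on Palm distributions of log-Gaussian Cox processes that the paper only cites as a remark.
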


\begin{proof}
For a log-Gaussian random field model,
\[
 \EE \exp\left[ \sum_{i=1}^k Z_1(x_i) + \sum_{i=1}^l Z_2( y_i) \right]
= \exp\left[ \sum_{i=1}^k m_1(x_i) + \sum_{i=1}^l m_2(y_i) +
  \frac{k}{2}  \sigma^2_{11} +   \frac{l}{2}  \sigma^2_{22}
\right] \times
\]
\[
\exp\left[
 \sigma^2_{11}  \sum_{1\leq i < j \leq k} r_{11}(x_j-x_i) )
+ \sigma^2_{22}  \sum_{1\leq i < j \leq l} r_{22}(y_j-y_i) )
+ \sigma^2_{12} \sum_{1\leq i \leq k} \sum_{1\leq j \leq l} r_{12}(y_j-x_i) 
\right]
\]
so that, with notation  as in the proof of Theorem~\ref{t:random-field},
$\mu^{(k+l)}$ is absolutely continuous and its Radon--Nikodym
derivative $p_{k+l}$ satisfies
\[
\frac{p_{k+l}( (x_1, 1), \dots, (x_k,1), (x_{k+1}, 2), \dots, (x_{k+l}, 2) )}{
p_1(x_1, 1) \cdots p_1(x_k,1) \, p_1( x_{k+1},2) \cdots p_1(x_{k+l}, 2) } = 
\]
\[
= \frac{p^X_{k+l}( (x_1, 1), \dots, (x_k,1), (x_{k+1}, 2), \dots, (x_{k+l}, 2) )}{
p^X_1(x_1, 1) \cdots p^X_1(x_k,1) \, p^X_1( x_{k+1},2) \cdots p^X_1(x_{k+l}, 2) } 
\times
\]
\[
\times
\exp\left[
 \sigma^2_{11} \sum_{1\leq i < j \leq k} r_{11}(x_j-x_i) 
+ \sigma^2_{22} \sum_{1\leq i < j \leq l} r_{22}(y_j-y_i)
+ \sigma^2_{12} \sum_{1\leq i \leq k} \sum_{1\leq j \leq l} r_{12}(y_j-x_i) 
\right].
\]
Since $X$ is stationary, translation invariance follows. 

For $k=1$ and $k=2$ we have
\[
p_1(x, i) = p_1^X(0,i) \exp\left[ m_i(x) + \sigma^2_{ii} / 2 \right]
\]
and
\[
\xi_2( (x,1), (y,2) ) = \frac{ p_2^X( (x,1), (y,2) ) }{p_1^X( x,1)\, p_1^X(y,2)}
\exp\left[ \sigma_{12}^2 r_{12}( y-x ) \right] - 1.
\]
The function $p_1(x,i)$ is bounded away from zero since $X$ has 
strictly positive volume fractions and the $m_i$ are bounded.
The form of the cross $K$-statistic follows from that of $\xi_2$ and
the first  expression for $J_{12}(t)$ is an immediate consequence of 
Theorem~\ref{t:random-field}.

Finally, consider the ratio of $p_{1+k+l}( (a,1), (x_1, 1), \dots, (x_k, 1),
(x_{k+1},2), \dots, (x_{k+l}, 2) ) $ and $p_1(a,1) \prod_{i=1}^{k+l} p_1(x_i, i_i)$,
which can be written as
\[
\frac{ \PP( x_i \in X_1, i=1, \dots, k; x_{k+i} \in X_2, i= 1, \dots, l  | 
a \in X_1 )} {
\prod_{i=1}^k \PP( x_i \in X_1) \prod_{i=1}^l \PP(x_{k+i} \in X_2 ) } \times
\]
\[
\frac{
p^\Gamma_{k+l}( (x_1, 1), \dots, (x_k, 1), (x_{k+1}, 2), \dots, (x_{k+l}, 2) )
}{
\prod_{i=1}^k p^\Gamma_{1} ( x_i, 1 ) \prod_{i=1}^l p^\Gamma_{1} ( x_{k+i}, 2) }
\times 
\prod_{i=1}^k e^{ \sigma^2_{11} r_{11}( x_i - a) }
\prod_{i=1}^l e^{ \sigma^2_{12} r_{12}( x_{k+i} - a) }.
\]
Hence $L^{(a,1)}(u_t^a)$ (cf.\ Theorem~\ref{t:j12}) 
becomes the Laplace functional $L$ evaluated for the function 
\[
\tilde u_t^a( x, i) =  1\{ ( x,i) \in B(a,t) \times \{ 2 \} \} \,
\exp\left[ \sigma^2_{12} r_{12}(x-a) \right] / p_1(x, 2)
\]
after conditioning on $a\in X_1$, an observation which completes the proof.
\end{proof}

In the context of a point process, \cite{Coeu15} prove the 
stronger result that the Palm distribution of a log-Gaussian Cox
process is another log-Gaussian Cox process.

\paragraph{Random thinning field model}

Consider the following random field model \cite{Digg83}
with inter-component dependence modelled by means of
a (deterministic) non-negative function $r_i(x)$, $i=1, 2$,
on $\R^d$ such that $r_1 + r_2 \equiv 1$. 
Let $\Gamma_0$ be a non-negative random field and assume that the 
components $\Gamma_i(x) = r_i(x) \Gamma_0(x)$ 
are integrable on bounded Borel sets. As before, $X$ is a stationary 
bivariate random closed set and a random measure is defined through 
(\ref{e:Ballani}). Heuristically speaking, the $r_i(x)$ can be thought 
of as location dependent retention probabilities for $X_i$. 

For the model just described, 
\[
1 + c_{12}^\Gamma(0,x) = \frac{\EE\left[ \Gamma_0(0) \Gamma_0(x) \right]}{
\EE \Gamma_0(0) \, \EE \Gamma_0(x) } = 1 + c^{\Gamma_0}(0, x)
\]
and similarly for higher orders so that $\Gamma$ is coverage-reweighted 
moment stationary precisely when $\Gamma_0$ is.  Hence 
Theorem~\ref{t:random-field} holds with the $\Gamma_i$ 
replaced by $\Gamma_0$. 

\section{Estimation}
\label{S:estimation}

For notational convenience, introduce the random measure 
$\Phi = ( \Phi_1, \Phi_2)$ defined by 
\[
\Phi_i(A) = \int_A  \frac{1}{p(x,i)} \, d\Psi_i(x)
\]
for Borel sets $A\subseteq \R^d$.

\begin{thm}
Let $\Psi = (\Psi_1, \Psi_2)$ be a coverage-reweighted moment stationary
bivariate random measure that 
is observed in a compact set $W\subseteq \R^d$ whose erosion $W_{\ominus t}
= \{ w\in W: B(w, t) \subseteq W \}$ has positive volume 
$\ell(W_{\ominus t}) > 0$. Then, under the assumptions of 
Theorem~\ref{t:j12}, 
\begin{equation}
\label{e:fhat}
\widehat{ L_2(t)} = 
  \frac{1}{\ell( W_{\ominus t})} 
\int_{W_{\ominus t}} e^{-\Phi_2( B(x,t) )} \, dx
\end{equation}
is an unbiased estimator for $L(u_t^0)$,
\begin{equation}
\label{e:khat}
\widehat{K_{12}(t)} = 
  \frac{1}{\ell( W_{\ominus t})} 
\int_{W_{\ominus t}} \Phi_2( B(x,t) ) \, d\Phi_1(x)
\end{equation}
is an unbiased estimator for $K_{12}(t)$ and
\begin{equation}
\label{e:dhat}
\widehat{L_{12}(t)} = 
  \frac{1}{\ell( W_{\ominus t})} 
\int_{W_{\ominus t}} e^{-\Phi_2( B(x,t) )} \, d\Phi_1(x)
\end{equation}
is unbiased for $L^{(0,1)}(u_t^0)$.
\end{thm}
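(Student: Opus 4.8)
The three unbiasedness claims all rest on the same mechanism: the translation-invariance (in the appropriate coverage-reweighted sense) of the relevant integrands, combined with either the Campbell–Mecke formula (\ref{e:CM}) or a direct Fubini argument. I would prove them in order of increasing subtlety, starting with $\widehat{L_2(t)}$, then $\widehat{K_{12}(t)}$, and finally $\widehat{L_{12}(t)}$, since the last one needs a Palm argument whereas the first is purely an ordinary expectation.

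\begin{proof}
For $\widehat{L_2(t)}$ the plan is to take expectations inside the spatial integral by Fubini (justified because $0 \le e^{-\Phi_2(B(x,t))} \le 1$ is bounded and $W_{\ominus t}$ has finite volume), giving
\[
\EE\,\widehat{L_2(t)} = \frac{1}{\ell(W_{\ominus t})}\int_{W_{\ominus t}} \EE\!\left[ e^{-\Phi_2(B(x,t))}\right] dx .
\]
The key step is to recognise that $\Phi_2(B(x,t)) = \int_{B(x,t)} u_t^x(y,2)\, d\Psi_2(y)$, so that $\EE\, e^{-\Phi_2(B(x,t))} = L(u_t^x)$, and then invoke the translation-invariance of $L(u_t^a)$ in $a$ established at the start of the proof of Theorem~\ref{t:j12} (via (\ref{e:loglu})). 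Hence the integrand equals $L(u_t^0)$ for every $x$, the integral collapses to $\ell(W_{\ominus t})\,L(u_t^0)$, and unbiasedness follows.

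For $\widehat{K_{12}(t)}$ I would apply the Campbell–Mecke formula (\ref{e:CM}) with the test function $g((x,1),\Psi) = \ell(W_{\ominus t})^{-1}\,1\{x\in W_{\ominus t}\}\,\Phi_2(B(x,t))/p_1(x,1)$ (and $g\equiv 0$ on the second component), so that integrating $d\Phi_1(x)=d\Psi_1(x)/p_1(x,1)$ against it reproduces the estimator. The right-hand side of (\ref{e:CM}) then becomes $\ell(W_{\ominus t})^{-1}\int_{W_{\ominus t}} \EE^{(x,1)}[\Phi_2(B(x,t))]\,dx$, and by Lemma~\ref{l:Kpalm} the Palm expectation inside equals $K_{12}(t)$ independently of $x$; the spatial integral again cancels against the normalisation. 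The estimator $\widehat{L_{12}(t)}$ is handled by the identical Campbell–Mecke step, but now with $e^{-\Phi_2(B(x,t))}$ in place of $\Phi_2(B(x,t))$, yielding $\ell(W_{\ominus t})^{-1}\int_{W_{\ominus t}} \EE^{(x,1)}[e^{-\Phi_2(B(x,t))}]\,dx$; the inner Palm expectation is precisely $L^{(x,1)}(u_t^x)$, which by the $a$-independence asserted in Theorem~\ref{t:j12} equals $L^{(0,1)}(u_t^0)$ for almost all $x$, and unbiasedness follows.
\end{proof}

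\textbf{Main obstacle.} The genuine difficulty is not the algebra but verifying that the Palm-expectation integrands are constant in $x$ \emph{and} that the interchanges of expectation with integration are legitimate. For $\widehat{K_{12}(t)}$ and $\widehat{L_{12}(t)}$ the constancy in $x$ relies on the ``almost all $a$'' clauses in Theorem~\ref{t:palm} and Theorem~\ref{t:j12}; one must check that the exceptional $\mu^{(1)}$-null set does not obstruct integration over $W_{\ominus t}$, which is fine because $p_1$ is bounded away from zero so $\mu^{(1)}$-null sets are Lebesgue-null. The Fubini justification for $\widehat{L_2(t)}$ is immediate from boundedness, but for $\widehat{K_{12}(t)}$ it tacitly requires the finiteness of the relevant moment/Palm expectation, which is supplied by the absolute-convergence hypotheses of Theorem~\ref{t:j12}.
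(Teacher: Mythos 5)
Your proposal is correct and follows essentially the same route as the paper: unbiasedness of $\widehat{L_2(t)}$ via Fubini and the $a$-independence of $L(u_t^a)$ from Theorem~\ref{t:j12}, and unbiasedness of $\widehat{K_{12}(t)}$ and $\widehat{L_{12}(t)}$ via the Campbell--Mecke formula (\ref{e:CM}) applied to exactly the test functions the paper uses, combined with the constancy in $x$ of the Palm quantities (Lemma~\ref{l:Kpalm}, respectively Theorem~\ref{t:j12}). The only differences are cosmetic — you treat $\widehat{K_{12}(t)}$ before $\widehat{L_{12}(t)}$ and spell out the measure-theoretic justifications (Fubini, the harmlessness of the $\mu^{(1)}$-null exceptional set given $\inf p_1 > 0$) that the paper leaves implicit.
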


\begin{proof}
First, note that for all $x\in W_{\ominus t}$ the mass
\(
\Phi_2(B(x,t)) 
\)
can be computed from the observation since $B(x,t) \subseteq W$. 
Moreover, 
\[
\EE \left[ e^{- \Phi_2( B(x,t) ) } \right] = 
L( 1_{B(x,t) \times \{ 2 \} }(\cdot) / p_1( \cdot) )
\]
regardless of $x$ by an appeal to Theorem~\ref{t:j12}.
Consequently, (\ref{e:fhat}) is unbiased. 

Turning to (\ref{e:dhat}), by (\ref{e:CM}) with 
\[
g( (x,i), \Psi) = \frac{1_{ W_{\ominus t} \times \{1\} }(x,i)}{p_1(x,i) }
\exp[ -\Phi_2(B(x,t) ) ] 
\]
we have
\[
\ell(W_{\ominus t}) \EE\widehat{L_{12}(t)} = 
 \int_{W_{\ominus t}} 
 \frac{ L^{(x,1)}( 1_{B(x,t) \times \{ 2 \} }(\cdot) / p_1( \cdot) )}{ p_1(x,1) } 
    p_1(x,1) \, dx.
\]
Since $L^{(x,1)}( 1_{B(x,t) \times \{ 2 \} }(\cdot) / p_1( \cdot ))$ does
not depend on $x$ by Theorem~\ref{t:j12}, the estimator is unbiased. 
The same argument for 
\[
\tilde g( (x,i), \Psi) = \frac{1_{ W_{\ominus t} \times \{1\} }(x,i)}{p_1(x,i) }
\Phi_2(B(x,t) ) 
\]
proves the unbiasedness of $\widehat{K_{12}(t)}$.
\end{proof}

A few remarks are in order. 
In practice, the integrals will be approximated by Riemann sums.
Moreover, in accordance with the Hamilton principle \cite{Stoy2},
the denominator $\ell(W_{\ominus t})$ in $\widehat{K_{12}(t)}$ and
$\widehat{L_{12}(t)}$ can be replaced by $\Phi_1(W_{\ominus t})$.
Finally, we assumed that the coverage function is known. If 
this is not the case, a plug-in estimator may be used.

\section{Illustrations}
\label{S:examples}

In this section, we illustrate the use of our statistics on simulated
realisations of some of the models discussed in Section~\ref{S:theo-ex}.

\paragraph{Widom--Rowlinson mixture model}

First, consider the Widom--Rowlinson mixture model \cite{WidoRowl70}
defined as follows. Let $(N_1, N_2)$ be a bivariate point process whose 
joint density with respect to the product measure of two independent 
unit rate Poisson processes is 
\[
f_{\rm mix}(\phi_1, \phi_2 ) \propto
 \beta_1^{|\phi_1|} \beta_2^{|\phi_2|} 1\{ d(\phi_1, \phi_2) > r\},
\]
writing $|\cdot|$ for the cardinality and $d(\phi_1, \phi_2)$ for 
the smallest distance between a point of $\phi_1$ and one of $\phi_2$. 
In other words, points of different components are not allowed to be within
distance $r$ of one another.

A sample from this model can be obtained by coupling from the past
\cite{Hagg99,KendMoll00,LiesStoi06}. For the picture displayed in 
Figure~\ref{F:WR}, we used the {\tt mpplib} library \cite{mpplib} to 
generate a realisation with $\beta_1 = \beta_2 = 1$ and $r=1$ on 
$W = [0,10]\times[0,20]$. To avoid edge effects, we sampled on 
$[-1,11] \times[-1,21]$ and clipped the result to $W$. Placing 
closed balls of radius $r/2$ around each of the points yields a
bivariate random closed set, the Widom--Rowlinson germ grain model.
Note that 
\[
U_{r/2}(\phi_1) \cap U_{r/2}(\phi_2) = \emptyset,
\]
so that there is negative association between the two components.

The estimated cross statistics are shown in Figure~\ref{F:WRstat}.
The graph of $\widehat{L_{ij}(t)}$ lies above that of $\widehat{L_{j}(t)}$ 
reflecting the inhibition between the components. The graph of
 $\widehat{K_{ij}(t)}$ lies below that of the function $t\to \pi t^2$,
which confirms the negative correlation between the components.

\paragraph{Dual Widom--Rowlinson mixture model}

The dual Widom--Rowlinson mixture model is based on
\[
f_{\rm mix}(\phi_1, \phi_2 ) \propto
\beta_1^{n(\phi_1)} \beta_2^{n(\phi_2) } \,
1\{ \phi_2 \subseteq U_r(\phi_1) \}.
\]
Since the points of the second component lie in $U_r(\phi_1)$,
that is, within distance $r$ of a point from the first component,
the model exhibits positive association. Placing balls of radius
$r/2$ around the components yields a germ-grain model. 

Exact samples from this model can be obtained in two steps. First, 
generate an area-interaction point process with parameter $\beta_1$ and 
$\gamma = e^{-\beta_2}$ using coupling from the past \cite{KendMoll00}
by the {\tt mpplib} library \cite{mpplib}. Then, conditionally on 
the first component being $\phi_1$, generate a Poisson process of 
intensity $\beta_2$ and accept only those points that fall in 
$U_r(\phi_1)$.  Figure~\ref{F:dualWR} shows a realisation with 
$\beta_1 = \beta_2 = 1/4$ and $r=1$ on $W = [0,10]\times[0,20]$. 
To avoid edge effects, we sampled on $[-1,11] \times[-1,21]$ and 
clipped the result to $W$. 

The estimated cross statistics are shown in Figure~\ref{F:dualstat}.
The graph of $\widehat{L_{ij}(t)}$ lies below that of $\widehat{L_{j}(t)}$ 
reflecting the attraction between the components. The graph of
 $\widehat{K_{ij}(t)}$ lies above that of the function $t\to \pi t^2$,
which confirms the positive correlation between the components.

\paragraph{Boolean model marked by linked log-Gaussian field}

Our last illustrations concern random field models based on 
Gaussian random fields. Thus, let $\Gamma_0$ be a Gaussian random 
field with mean function $m(\cdot)$ and exponential covariance function 
\begin{equation}
\sigma^2 \exp[-\beta || x - y || ].
\label{e:cexp}
\end{equation}
The package {\tt fields} \cite{Nych} can be used to obtain approximate
realisations. An example on $W = [0,10] \times [0,20]$ with 
\[
m(x,y) = \frac{x+y}{10}
\]
and parameters $\sigma^2 = 1$, $\beta = 0.8$  viewed through independent 
Boolean models is depicted in Figure~\ref{F:Boolean}.

For a linked random field model, let $(X_1, X_2)$ consist of two 
independent stationary Boolean models with balls as primary grains, and 
set
\[
(\Psi_1, \Psi_2) =  
\left(
  \int_{X_1} e^{\Gamma_0(x)} dx, \int_{X_2} e^{\Gamma_0(x)} dx 
\right).
\]
Here, the common random field, although viewed through independent
spectres, causes positive association between the components of $\Psi$.

The estimated cross statistics are shown in Figure~\ref{F:Boolstat}
for $\Gamma_0$ as in Figure~\ref{F:Boolean} and Boolean models having
germ intensity $1/2$ and grain radius $r=1/2$.
The graph of $\widehat{L_{ij}(t)}$ lies below that of $\widehat{L_{j}(t)}$ 
reflecting the attraction between the components. The graph of
$\widehat{K_{ij}(t)}$ lies above that of the function $t\to \pi t^2$,
which confirms the positive correlation between the components.

An example of a random thinning field on $W = [0,10] \times [0,20]$ with 
\[
1 - r_2(x,y ) = r_1(x, y) =  \frac{y}{20}
\]
applied to $\exp[ \Gamma_0(\cdot)]$ with $\Gamma_0$ having mean zero 
and covariance function (\ref{e:cexp}) for $\sigma^2 = 1 $ and 
$\beta = 0.8$, and $X$ consisting of independent Boolean models as 
described above is shown in Figure~\ref{F:BoolThin}. 
Note that first component of the corresponding random measure $\Psi$ 
tends to place larger mass towards the top of $W$ (left panel), 
whereas the second components tends to place its mass near the bottom
(right panel of Figure~\ref{F:BoolThin}).

Although the first order structures -- as displayed in Figures~\ref{F:BoolThin}
and \ref{F:Boolean} -- of the random thinning field and the linked random
field model are completely different, their interaction structures
coincide and so do their cross statistics (cf.\ Figure~\ref{F:Boolstat}).

\section{Conclusion}

In this paper, we introduced summary statistics to quantify the 
correlation between the components of coverage-reweighted moment 
stationary multivariate random measures inspired by the $F$-, 
$G$- and $J$-statistics for point processes 
\cite{CronLies16,Lies11,LiesBadd99}.
The role of the generating functional in these papers is taken 
over by the Laplace functional and that of the product densities
by the coverage functions. Our statistics can also be seen as 
generalisations of the correlation measures introduced in 
\cite{StoyOhse82} for stationary random closed sets. 

To the best of our knowledge, such cross statistics for inhomogeneous
marked sets have not been proposed before. Under the strong
assumption of stationarity, however, some statistics were suggested.
Foxall and Baddeley \cite{FoxBadd02} defined a cross $J$-function 
for the dependence of a random closed set $X$ -- a line segment 
process in their geological application -- on a point pattern $Y$ by 
\[
J(t) = \frac{\PP^{0}(d((0, X) > t ) }{
\PP( d(0, X) > t )}
\]
where $\PP^0$ is the Palm distribution of $Y$, whereas 
Kleinschroth {\em et al.} \cite{Klei13} replaced the numerator by 
\[ 
\PP^{(0,i)}( \Psi_j(B(0,t)) = 0 )
\]
for the random length-measures $\Psi_j$ associated to a bivariate line 
segment process. It is not clear, though, how to generalise the resulting 
statistics to non-homogeneous models, as the moment measure of the random 
length-measure may not admit a Radon--Nikodym derivative. 

\clearpage

\newpage

\section*{Figures}

\begin{figure}[htb]
\begin{center}
\centerline{
\epsfxsize=0.4\hsize
\epsffile{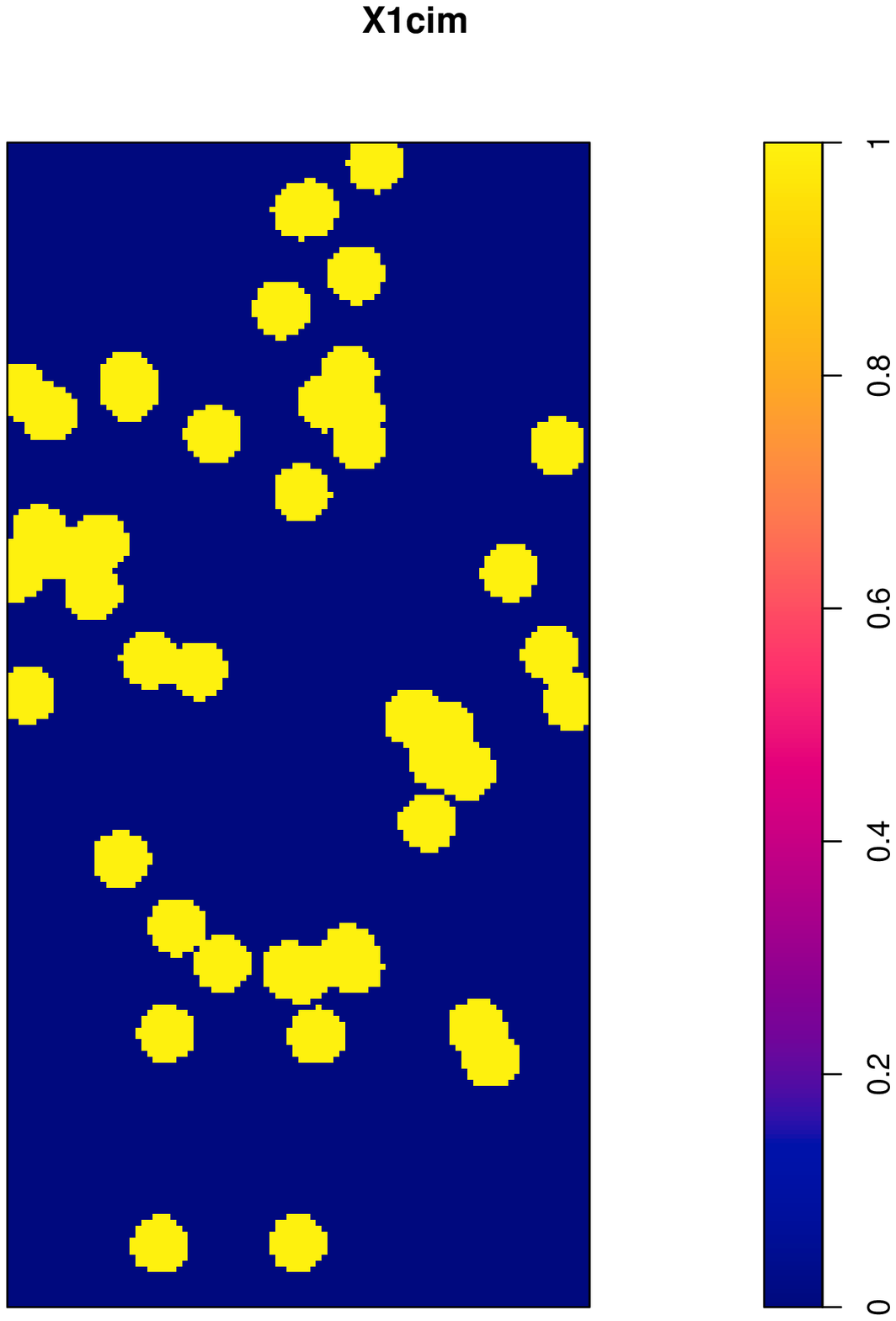}
\hspace{0.5cm}
\epsfxsize=0.4\hsize
\epsffile{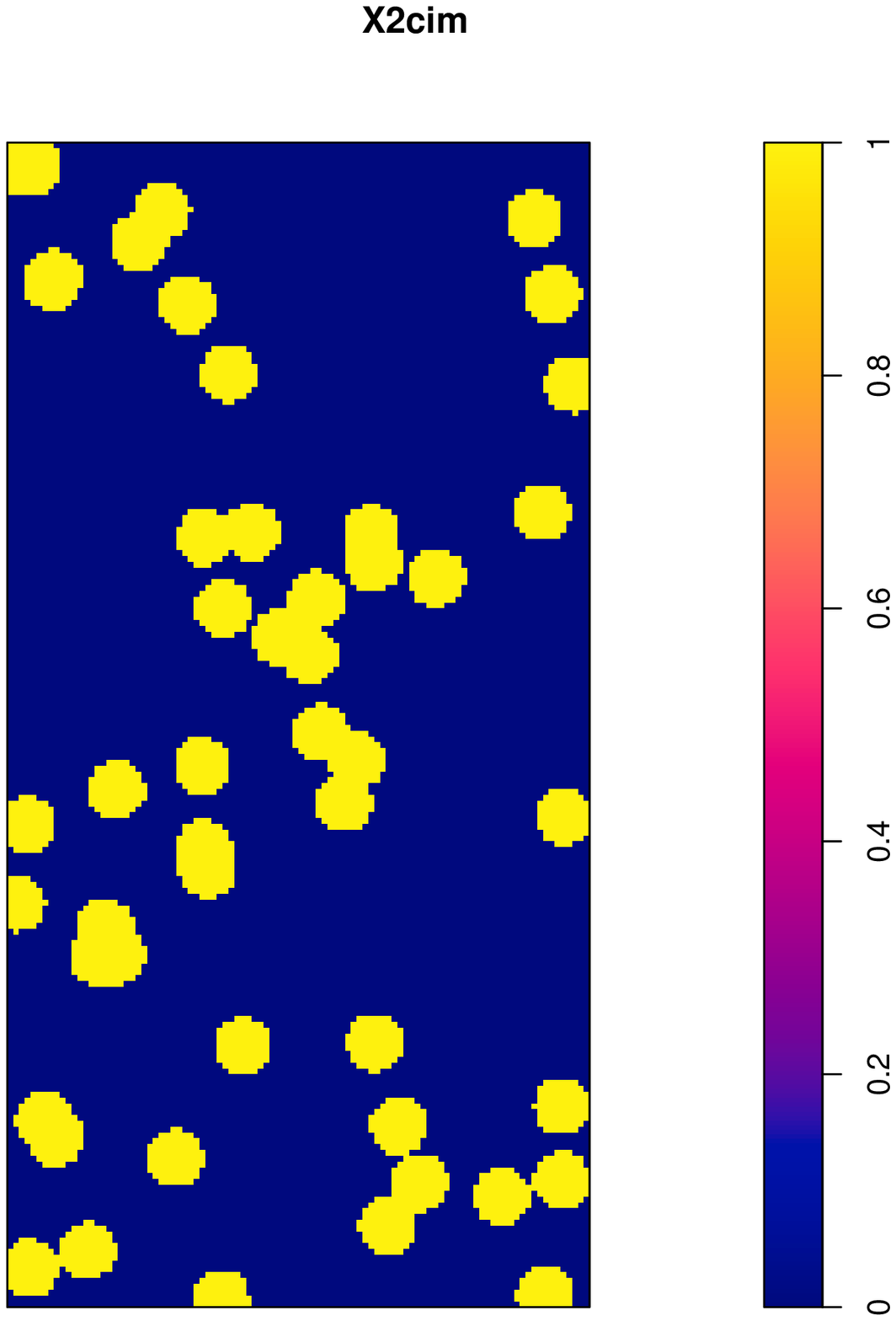}}
\end{center}
\caption{Images of $U_{r/2}(\phi_1)$ (left) and $U_{r/2}(\phi_2)$
for a realisation $(\phi_1, \phi_2)$ of the Widom--Rowlinson mixture 
model with $\beta_1 = \beta_2 = 1$ on $W=[0,10]\times [0,20]$ for
$r=1$.}
\label{F:WR}
\end{figure}

\begin{figure}[htb]
\begin{center}
\centerline{
\epsfxsize=0.3\hsize
\epsffile{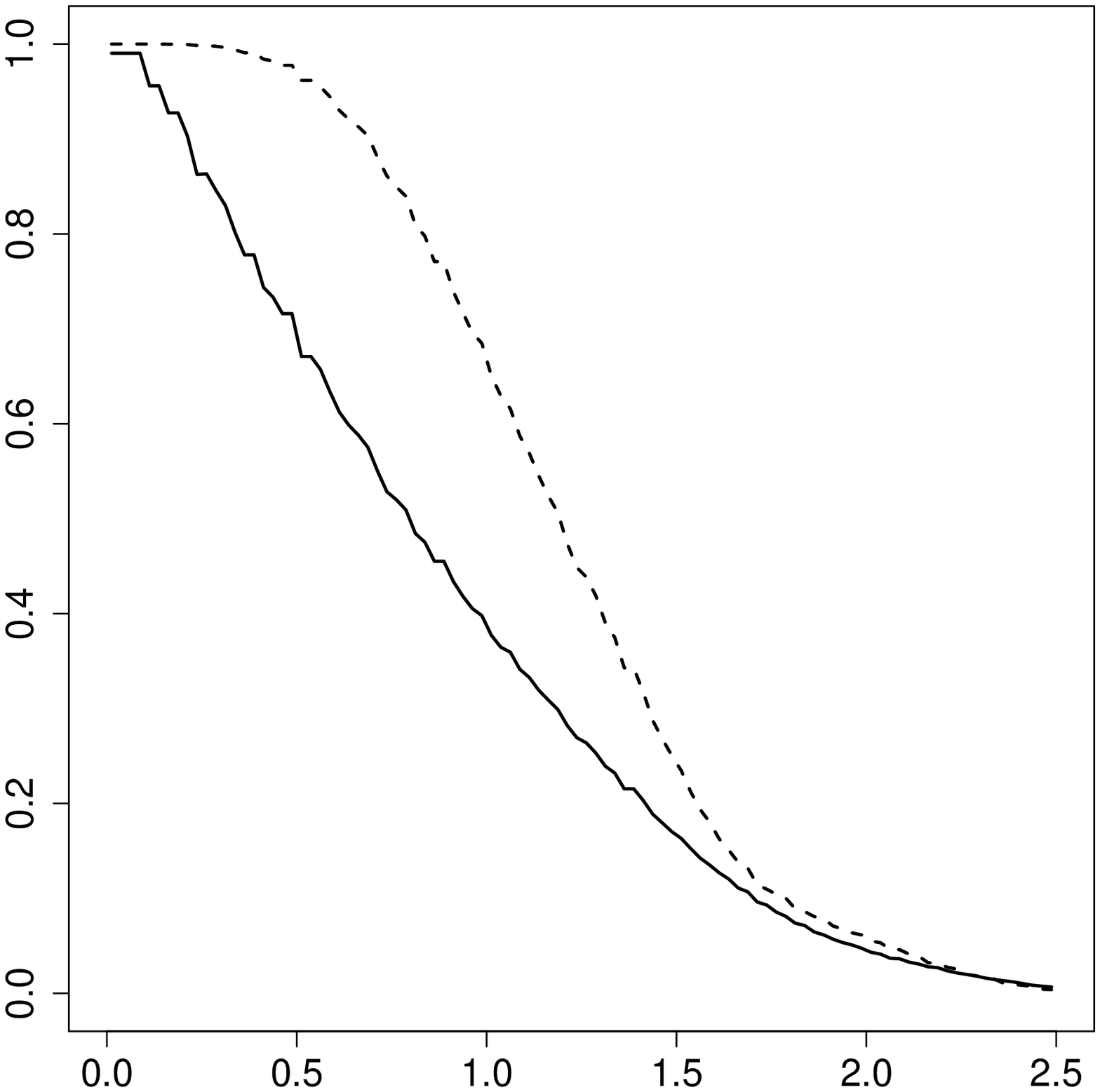}
\hspace{0.5cm}
\epsfxsize=0.3\hsize
\epsffile{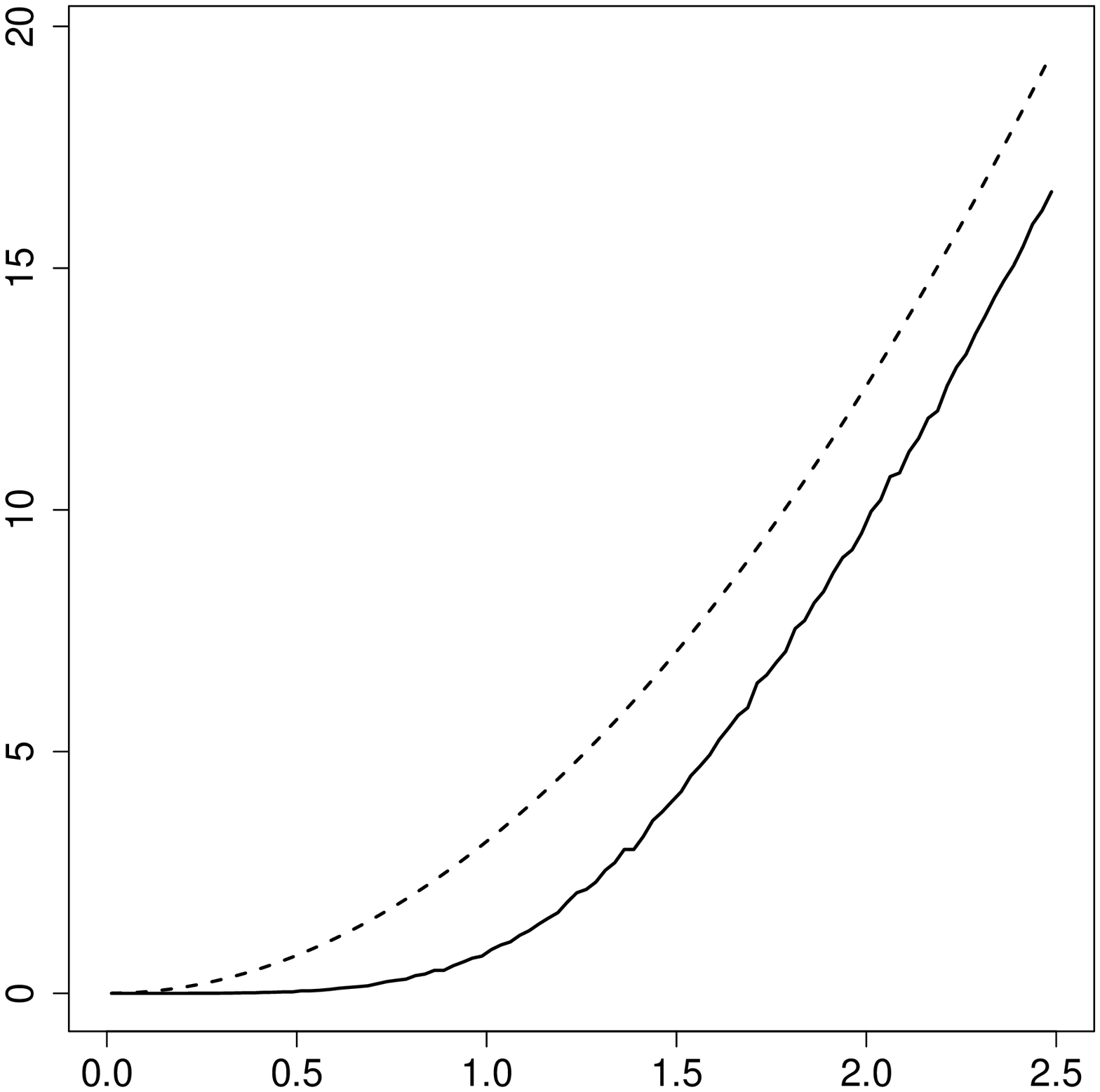}}

\centerline{
\epsfxsize=0.3\hsize
\epsffile{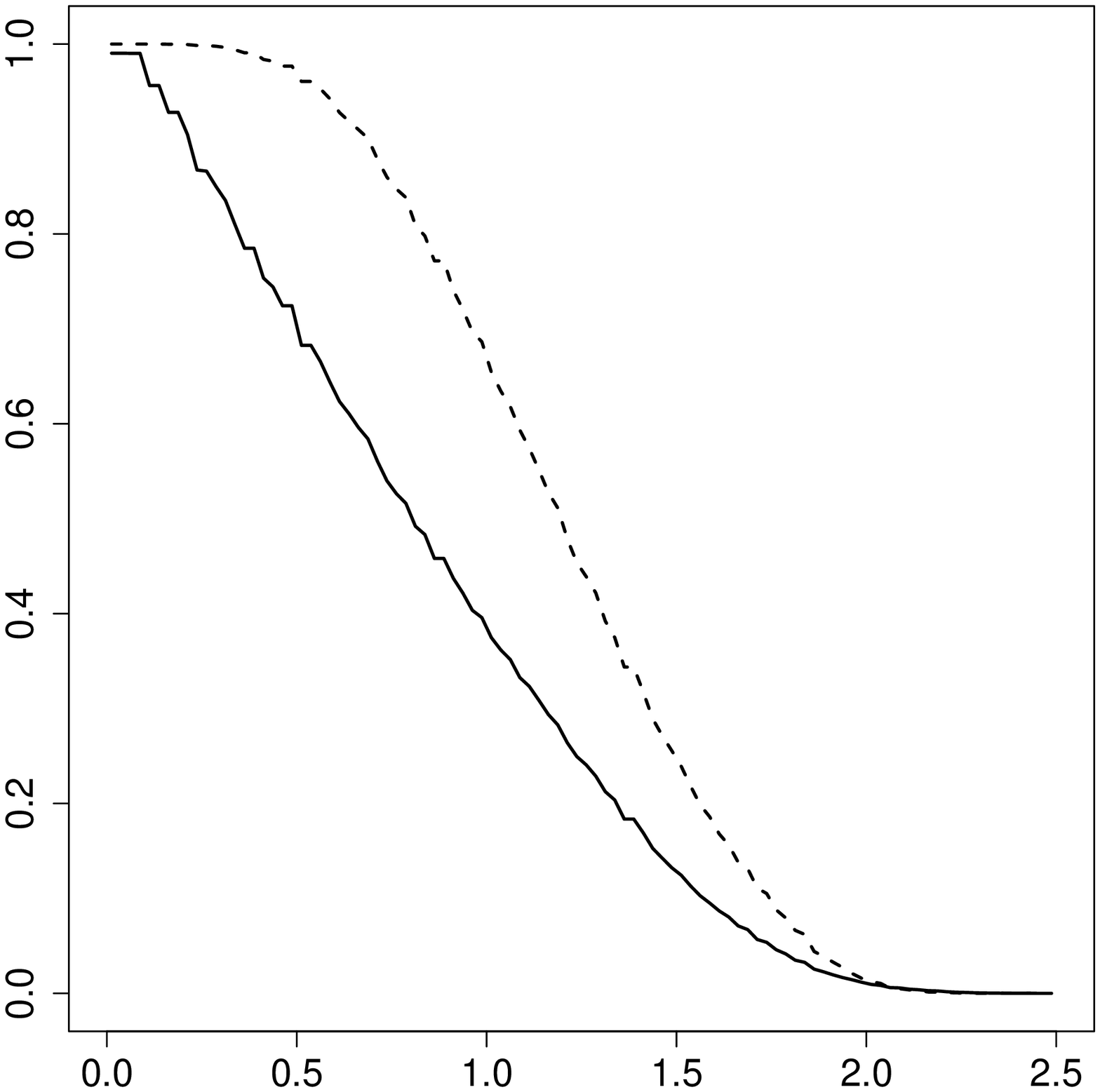}
\hspace{0.5cm}
\epsfxsize=0.3\hsize
\epsffile{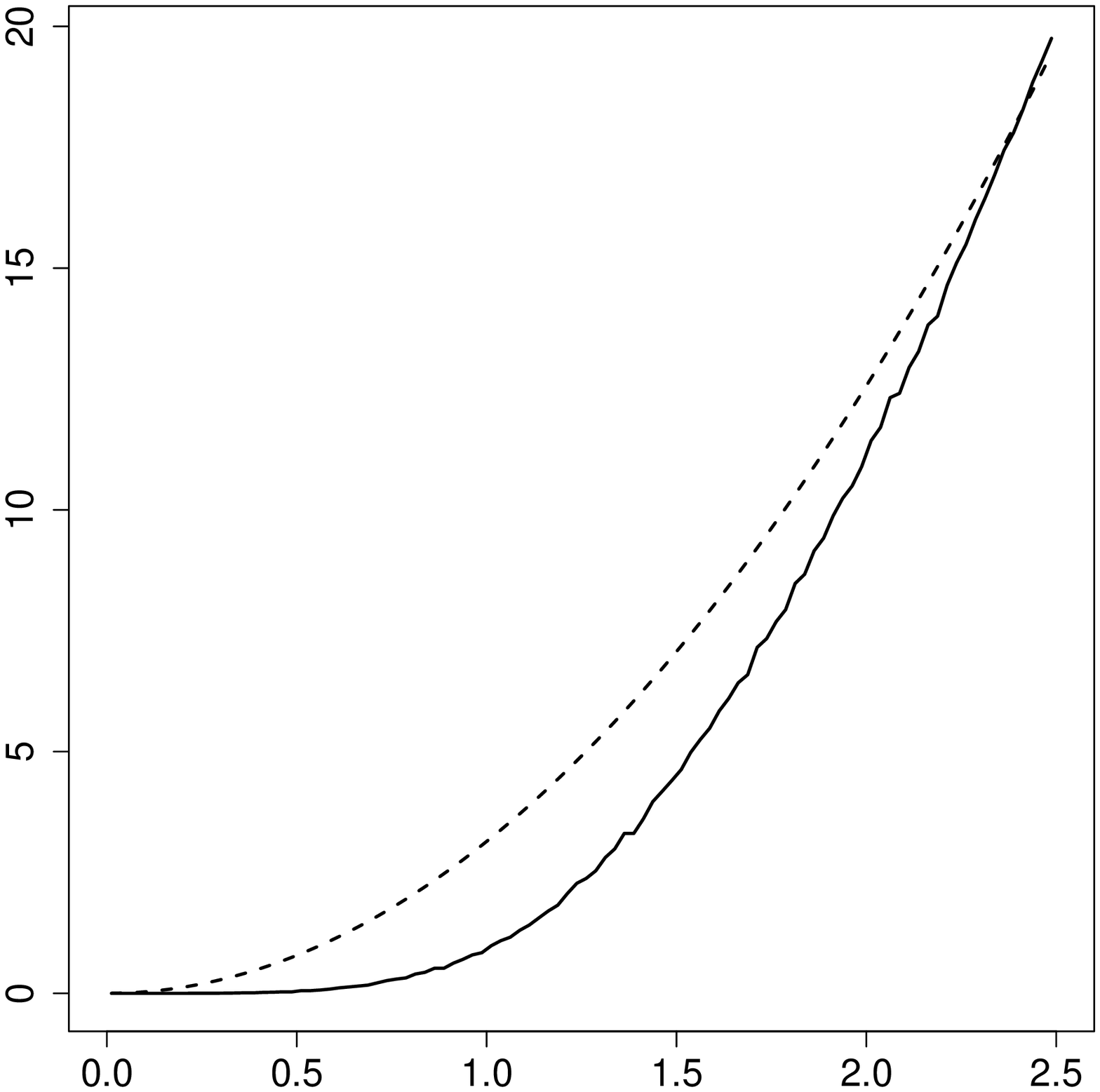}}
\end{center}
\caption{Estimated cross statistics for the data of Figure~\ref{F:WR}.
Top row: $\widehat{L_2(t)}$ (solid) and $\widehat{L_{12}(t)}$ (dotted)
plotted against $t$ (left); $\widehat{K_{12}(t)}$ (solid) and $\pi t^2$ 
(dotted) plotted against $t$ (right).
Bottom row: $\widehat{L_1(t)}$ (solid) and $\widehat{L_{21}(t)}$ (dotted)
plotted against $t$ (left); $\widehat{K_{21}(t)}$ (solid) and $\pi t^2$ 
(dotted) plotted against $t$ (right).}
\label{F:WRstat}
\end{figure}

\begin{figure}[htb]
\begin{center}
\centerline{
\epsfxsize=0.4\hsize
\epsffile{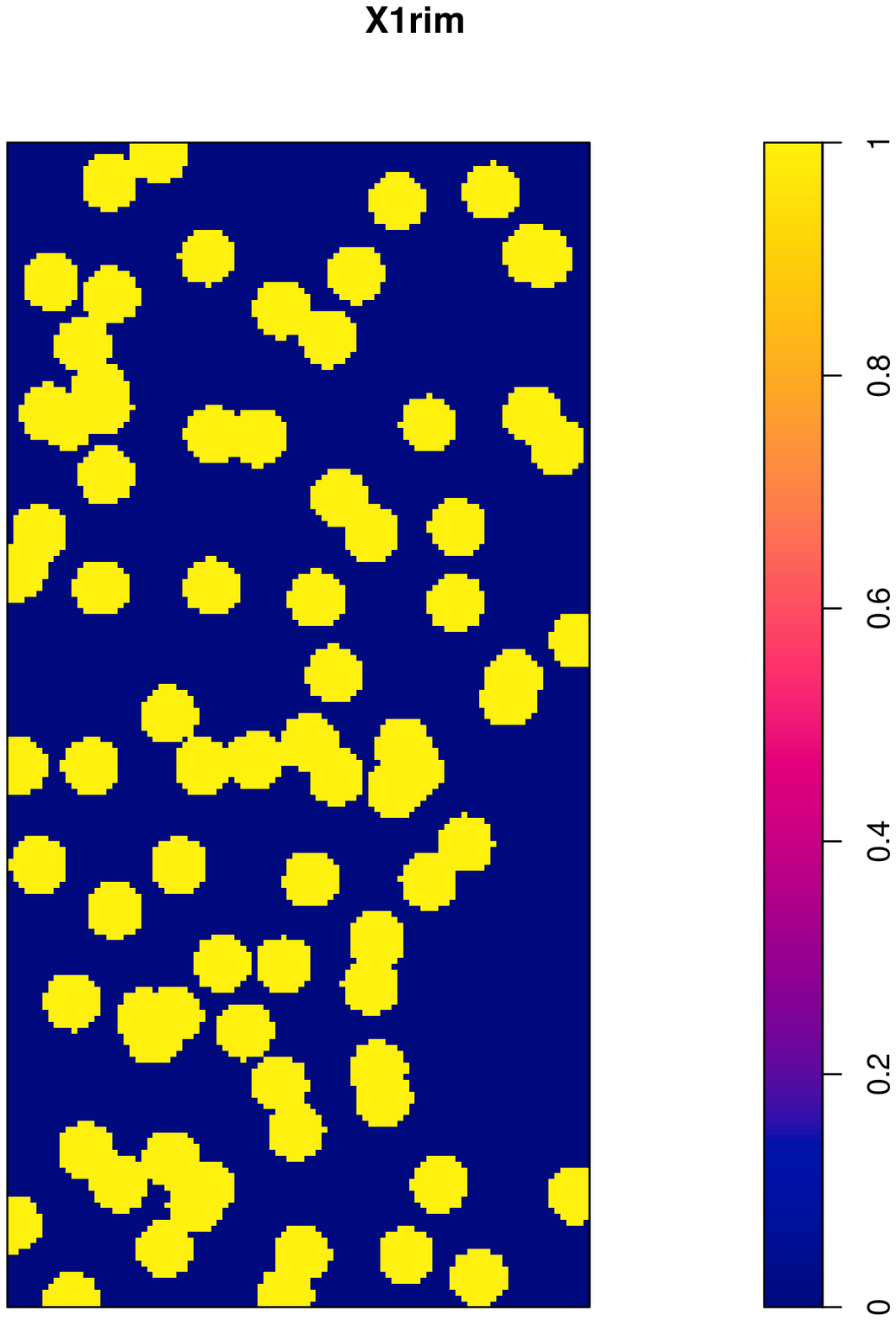}
\hspace{0.5cm}
\epsfxsize=0.4\hsize
\epsffile{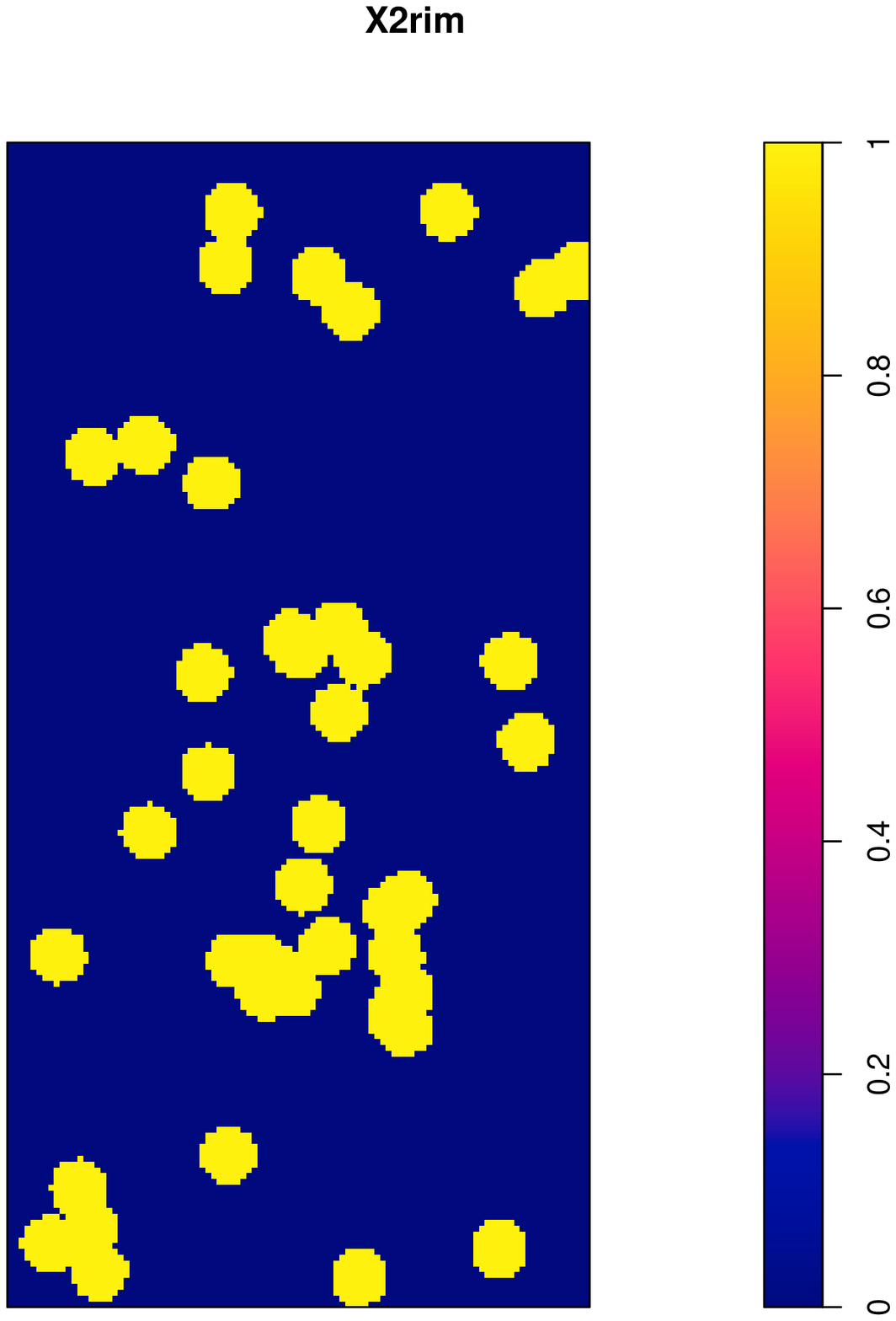}}
\end{center}
\caption{Images of $U_{r/2}(\phi_1)$ (left) and $U_{r/2}(\phi_2)$
for a realisation $(\phi_1, \phi_2)$ of the dual Widom--Rowlinson mixture 
model with $\beta_1 = \beta_2 = 1/4$ on $W=[0,10]\times [0,20]$ for
$r=1$.}
\label{F:dualWR}
\end{figure}

\begin{figure}[htb]
\begin{center}
\centerline{
\epsfxsize=0.3\hsize
\epsffile{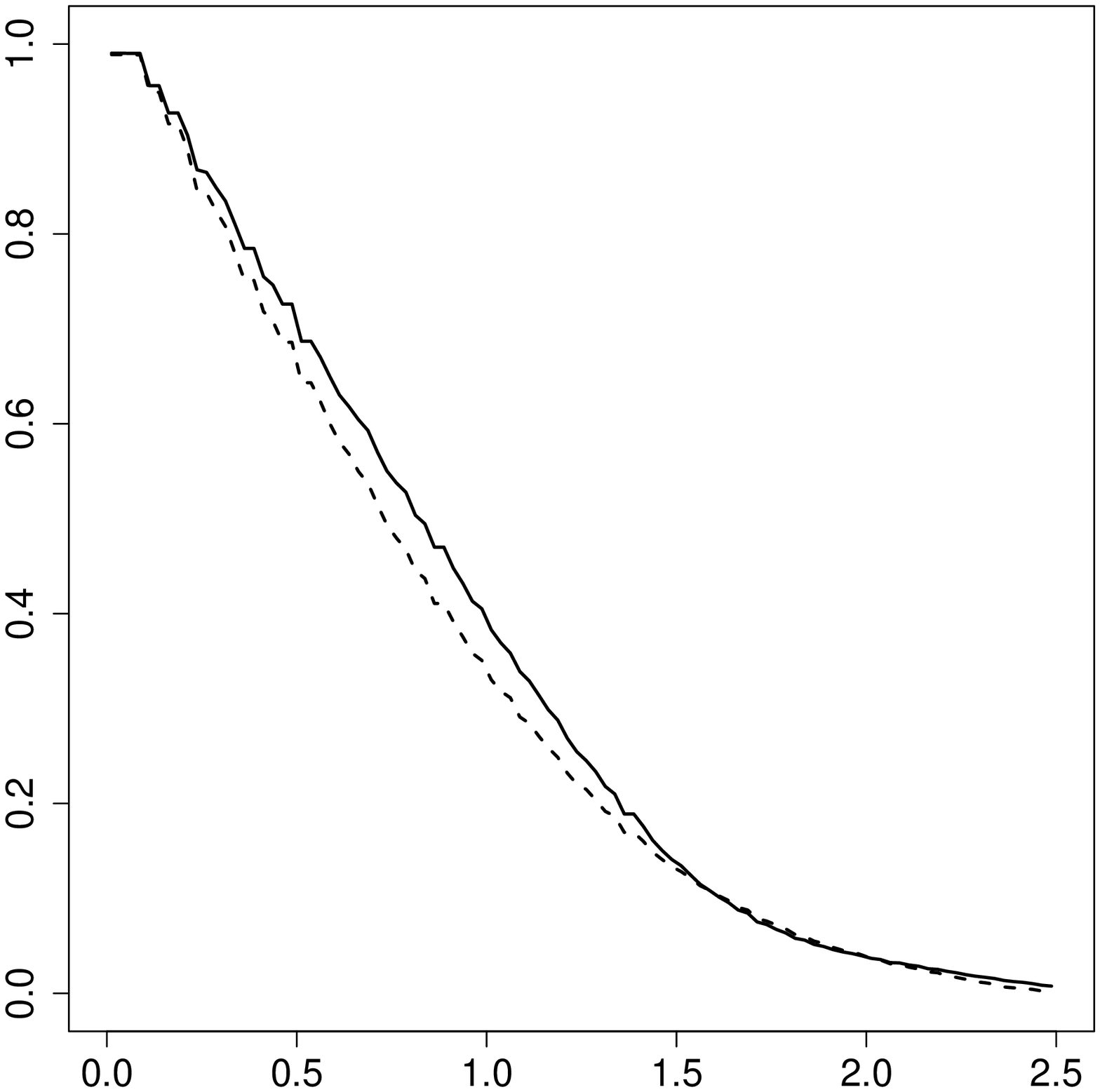}
\hspace{0.5cm}
\epsfxsize=0.3\hsize
\epsffile{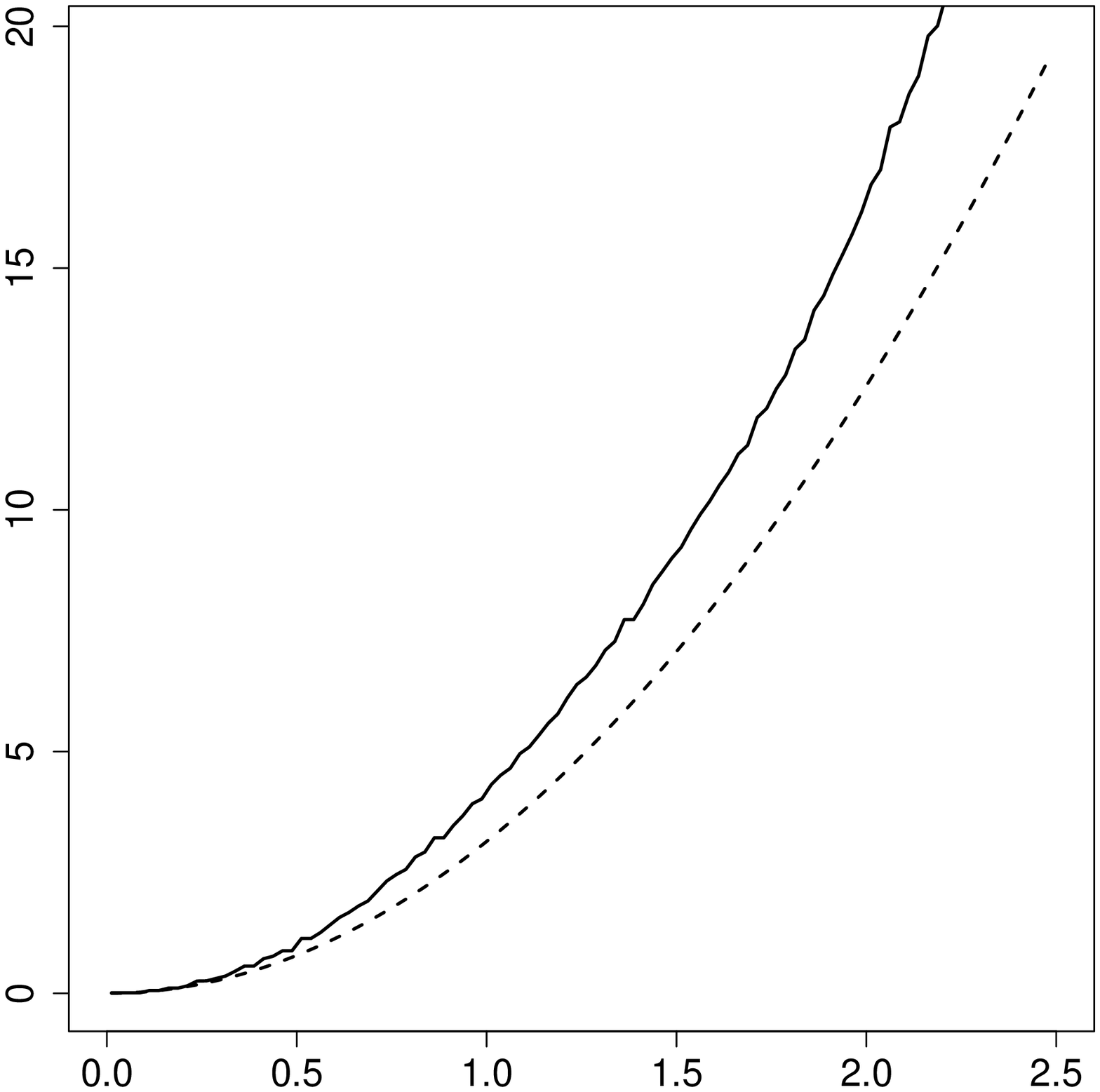}}

\

\centerline{
\epsfxsize=0.3\hsize
\epsffile{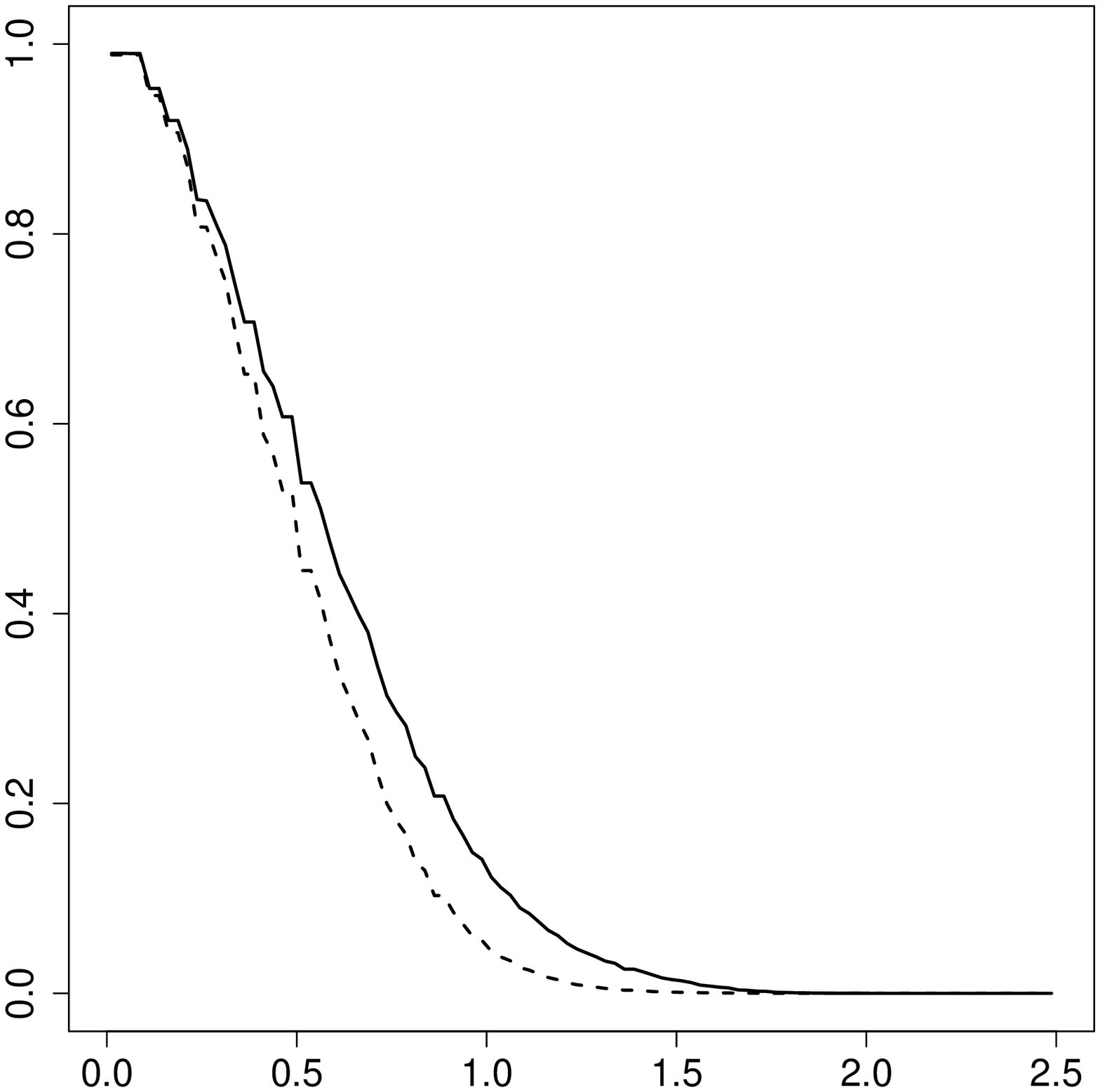}
\hspace{0.5cm}
\epsfxsize=0.3\hsize
\epsffile{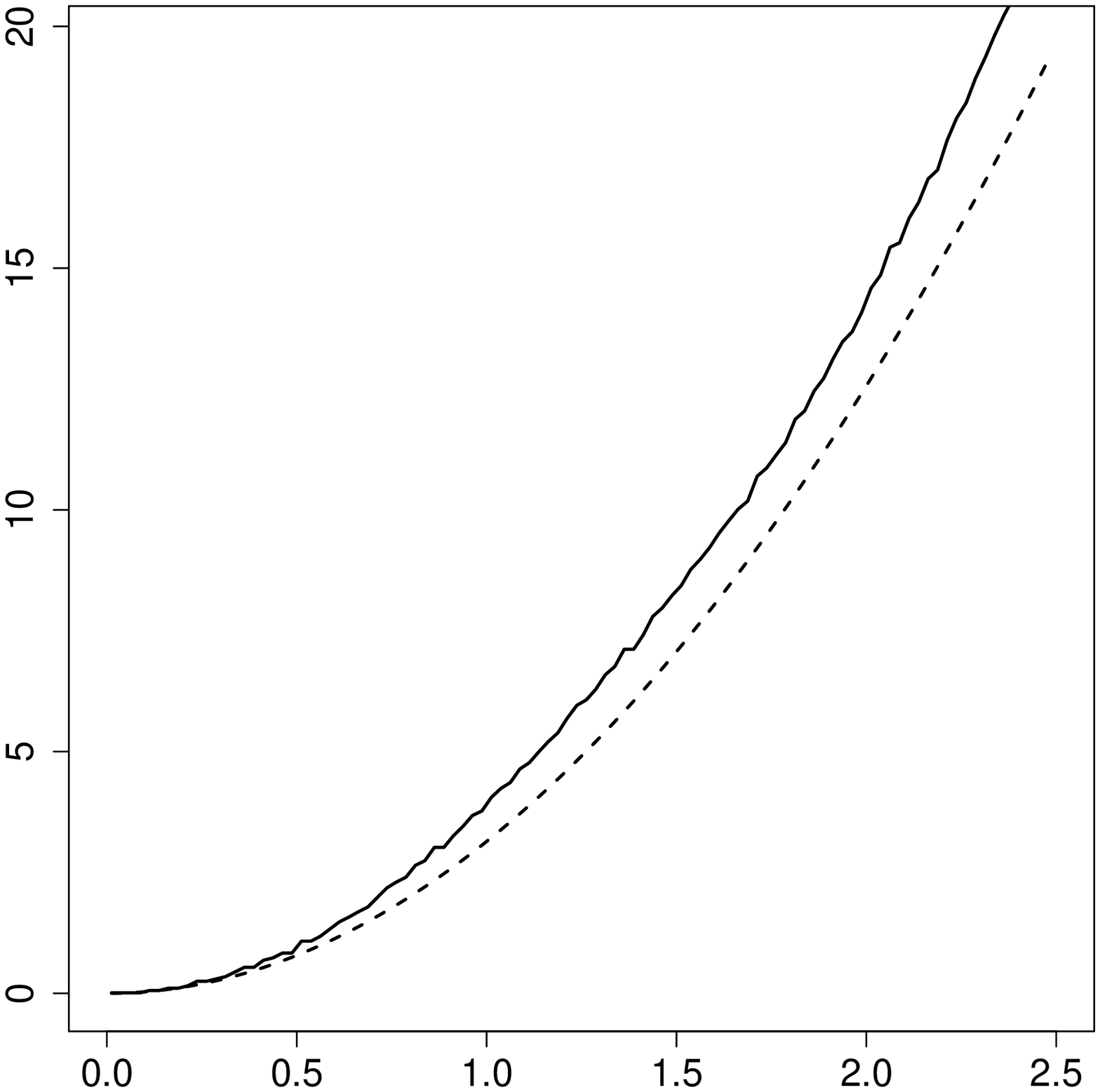}}
\end{center}
\caption{Estimated cross statistics for the data of Figure~\ref{F:dualWR}.
Top row: $\widehat{L_2(t)}$ (solid) and $\widehat{L_{12}(t)}$ (dotted)
plotted against $t$ (left); $\widehat{K_{12}(t)}$ (solid) and $\pi t^2$
(dotted) plotted against $t$ (right).
Bottom row: $\widehat{L_1(t)}$ (solid) and $\widehat{L_{21}(t)}$ (dotted)
plotted against $t$ (left); $\widehat{K_{21}(t)}$ (solid) and $\pi t^2$ 
(dotted) plotted against $t$ (right).}
\label{F:dualstat}
\end{figure}

\begin{figure}[htb]
\begin{center}
\centerline{
\epsfxsize=0.4\hsize
\epsffile{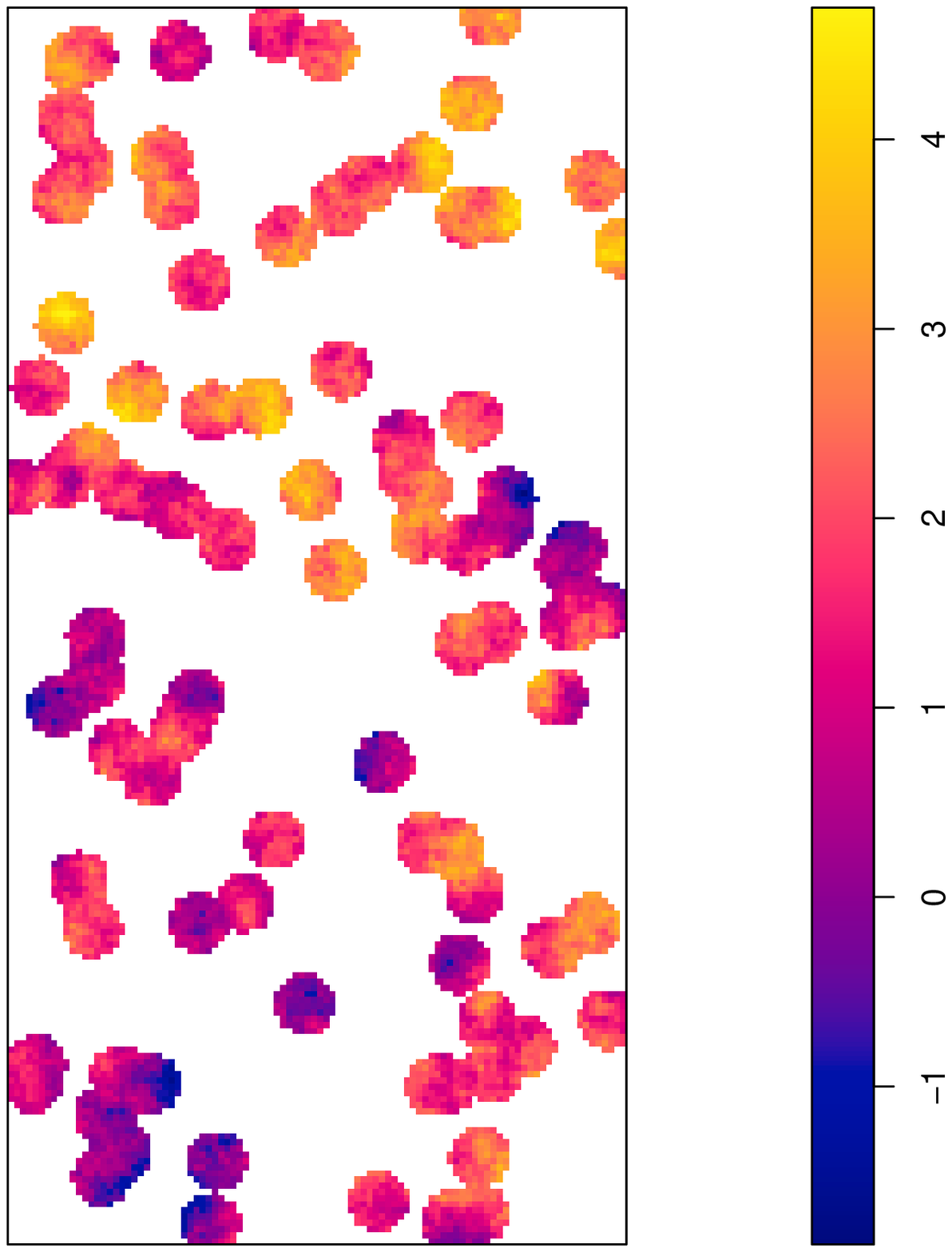}
\hspace{0.5cm}
\epsfxsize=0.4\hsize
\epsffile{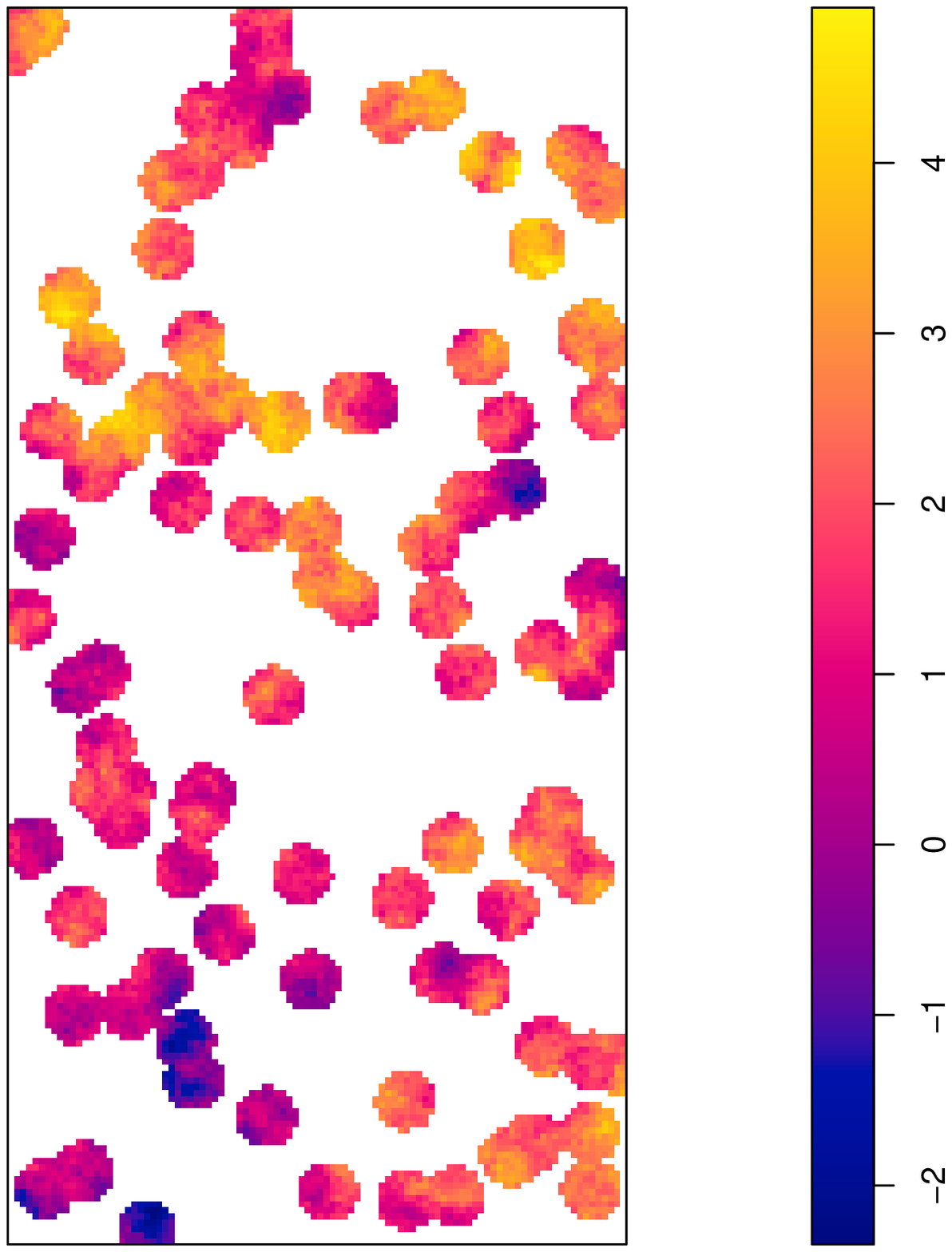}}
\end{center}
\caption{Images of a Gaussian random field  on $W=[0,10]\times [0,20]$ 
with mean function $m(x,y) = (x+y)/10$ and covariance function 
$\sigma^2 \exp[ - \beta || \cdot || ]$ for $\beta = 0.8$ and 
$\sigma^2 = 1$  viewed through independent Boolean models 
$X_1$ (left) and $X_2$ (right) with germ intensity $1/2$ and 
primary grain radius $1/2$. }
\label{F:Boolean}
\end{figure}

\begin{figure}[htb]
\begin{center}
\centerline{
\epsfxsize=0.3\hsize
\epsffile{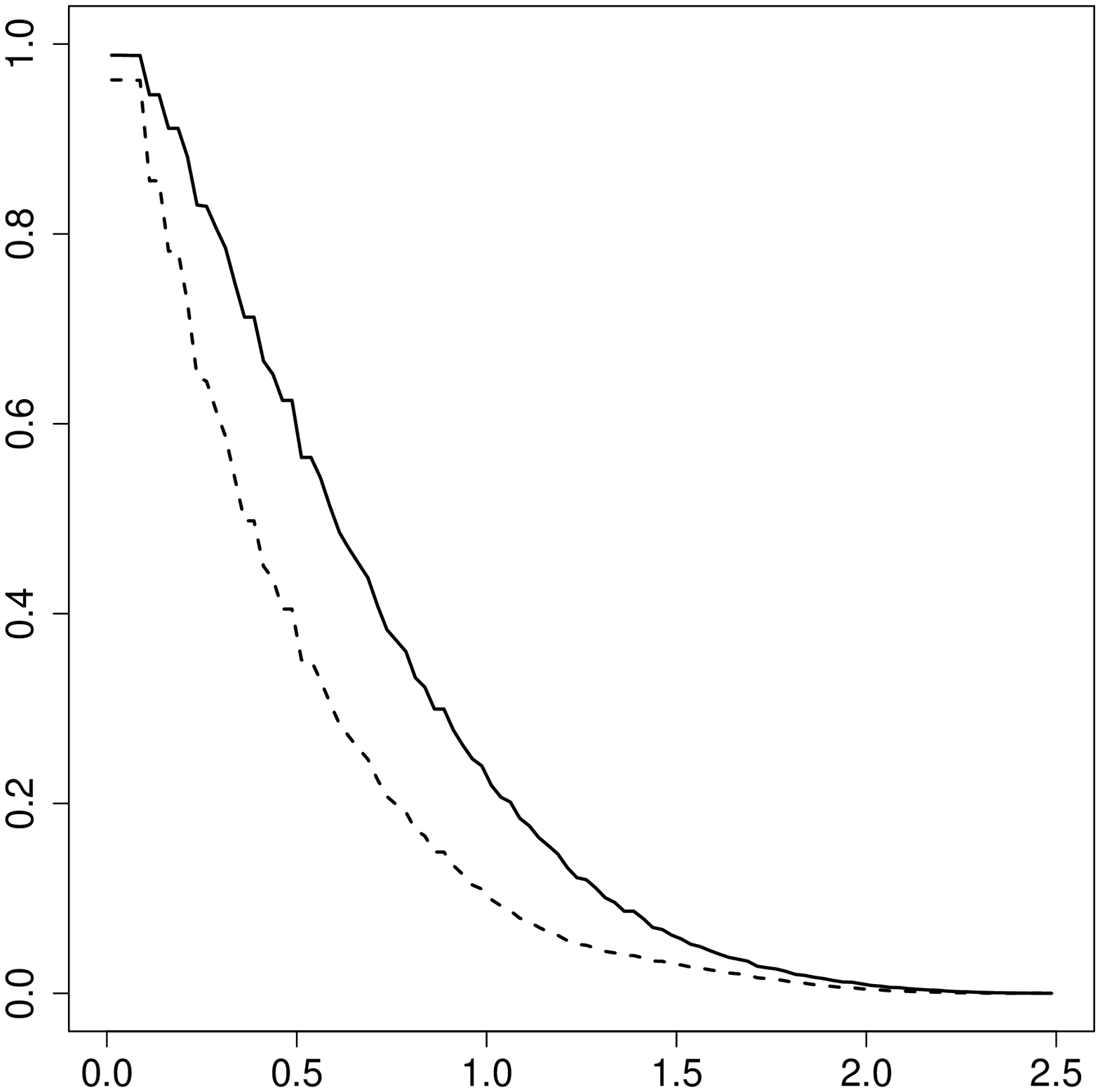}
\hspace{0.5cm}
\epsfxsize=0.3\hsize
\epsffile{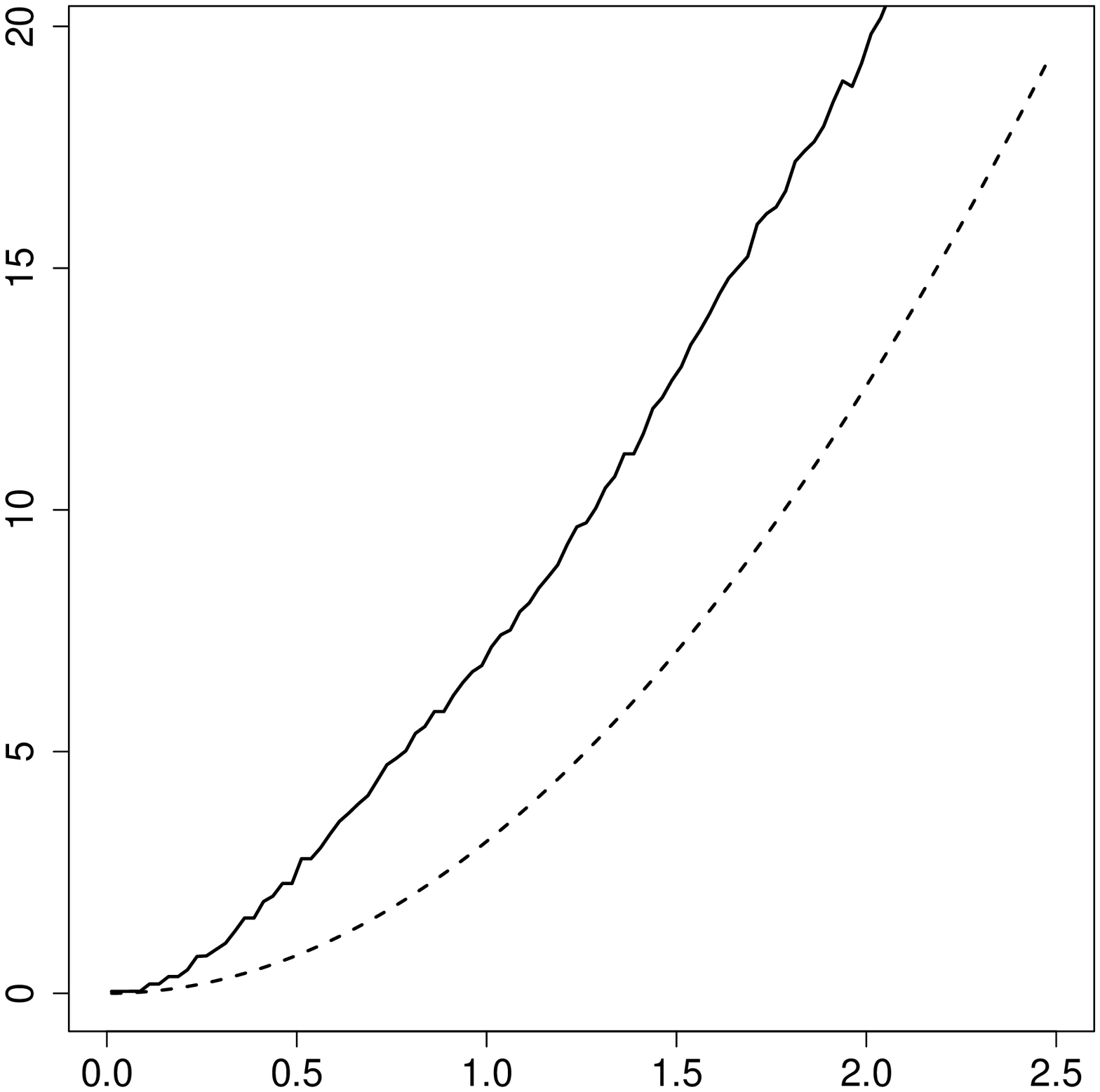}}

\centerline{
\epsfxsize=0.3\hsize
\epsffile{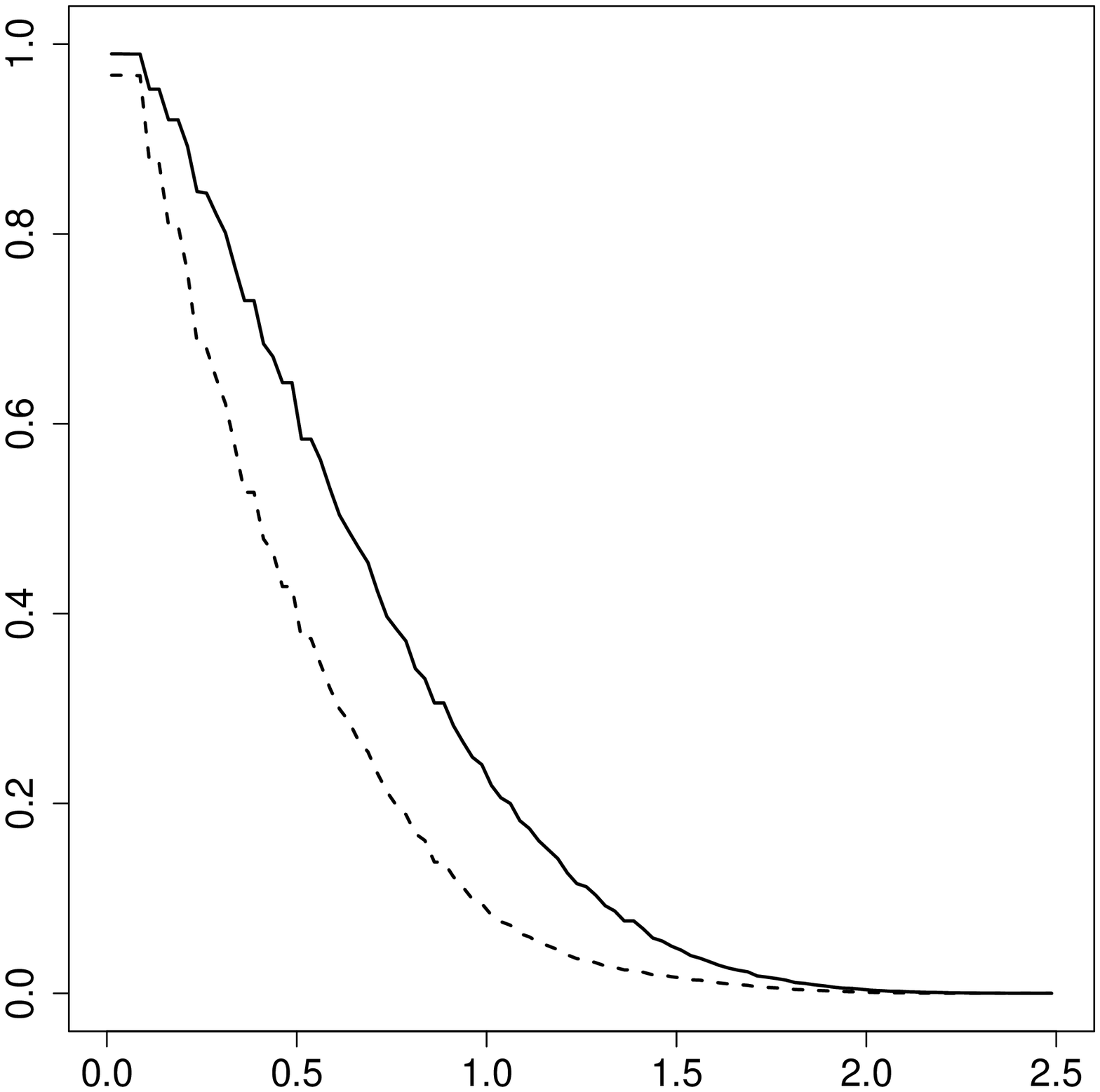}
\hspace{0.5cm}
\epsfxsize=0.3\hsize
\epsffile{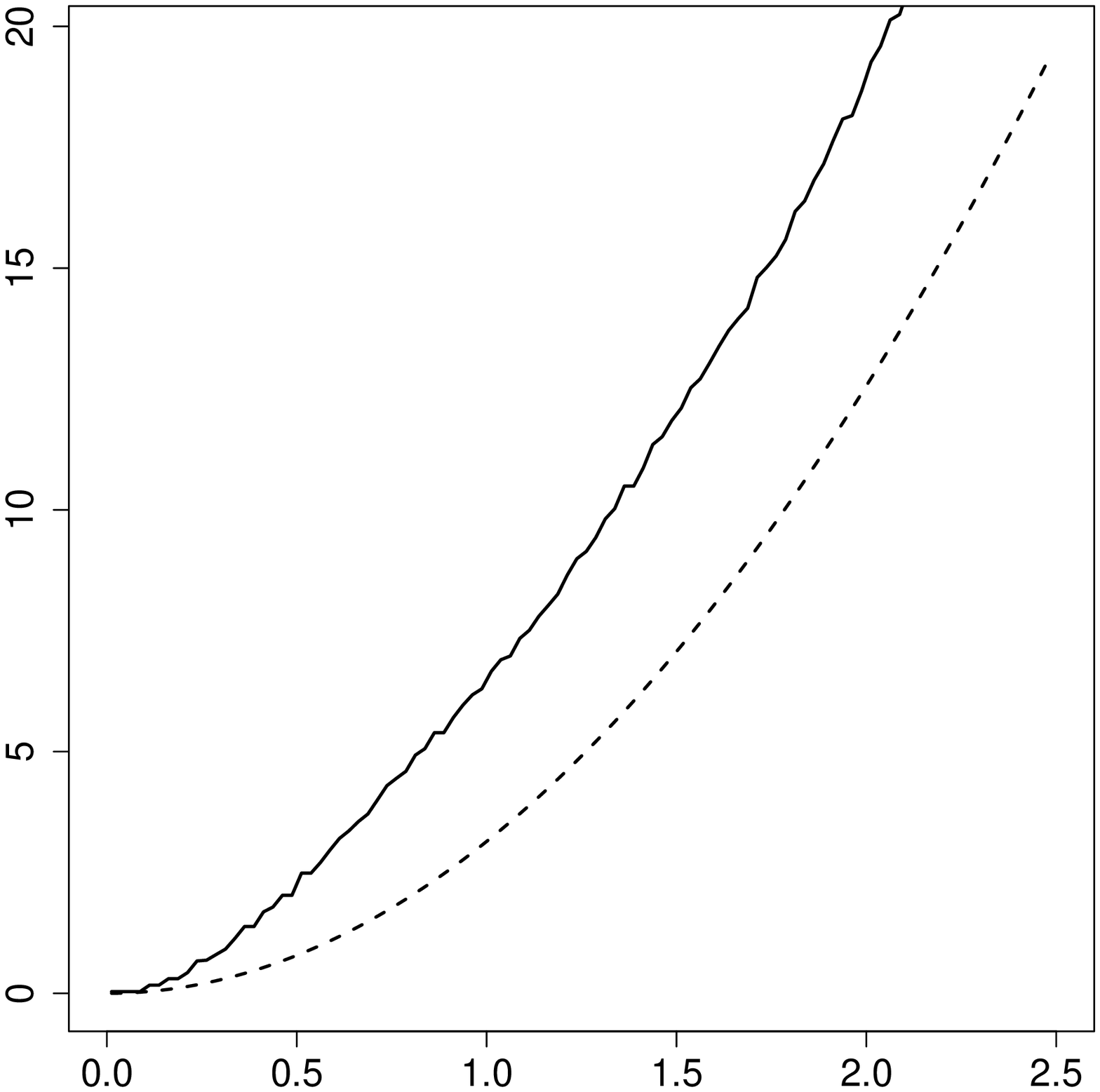}}
\end{center}
\caption{Estimated cross statistics for a random field model 
on $W=[0,10]\times [0,20]$ defined by $X$ and $\Gamma$ as follows:
$\Gamma_1(x) = \Gamma_2(x) = \exp[ Z(x) ]$ where $Z$ is a Gaussian 
random field with mean function $m(x,y) = (x+y)/10$ and covariance function
$\sigma^2 \exp[ - \beta || \cdot || ]$ for $\beta = 0.8$ and
$\sigma^2 = 1$; the components of $X$ are independent Boolean models
with germ intensity $1/2$ and primary grain radius $1/2$,  
cf.\ Figure~\ref{F:Boolean}.
Top row: $\widehat{L_2(t)}$ (solid) and $\widehat{L_{12}(t)}$ (dotted)
plotted against $t$ (left); $\widehat{K_{12}(t)}$ (solid) and $\pi t^2$
(dotted) plotted against $t$ (right).
Bottom row: $\widehat{L_1(t)}$ (solid) and $\widehat{L_{21}(t)}$ (dotted)
plotted against $t$ (left); $\widehat{K_{21}(t)}$ (solid) and $\pi t^2$ 
(dotted) plotted against $t$ (right).}
\label{F:Boolstat}
\end{figure}

\begin{figure}[htb]
\begin{center}
\centerline{
\epsfxsize=0.4\hsize
\epsffile{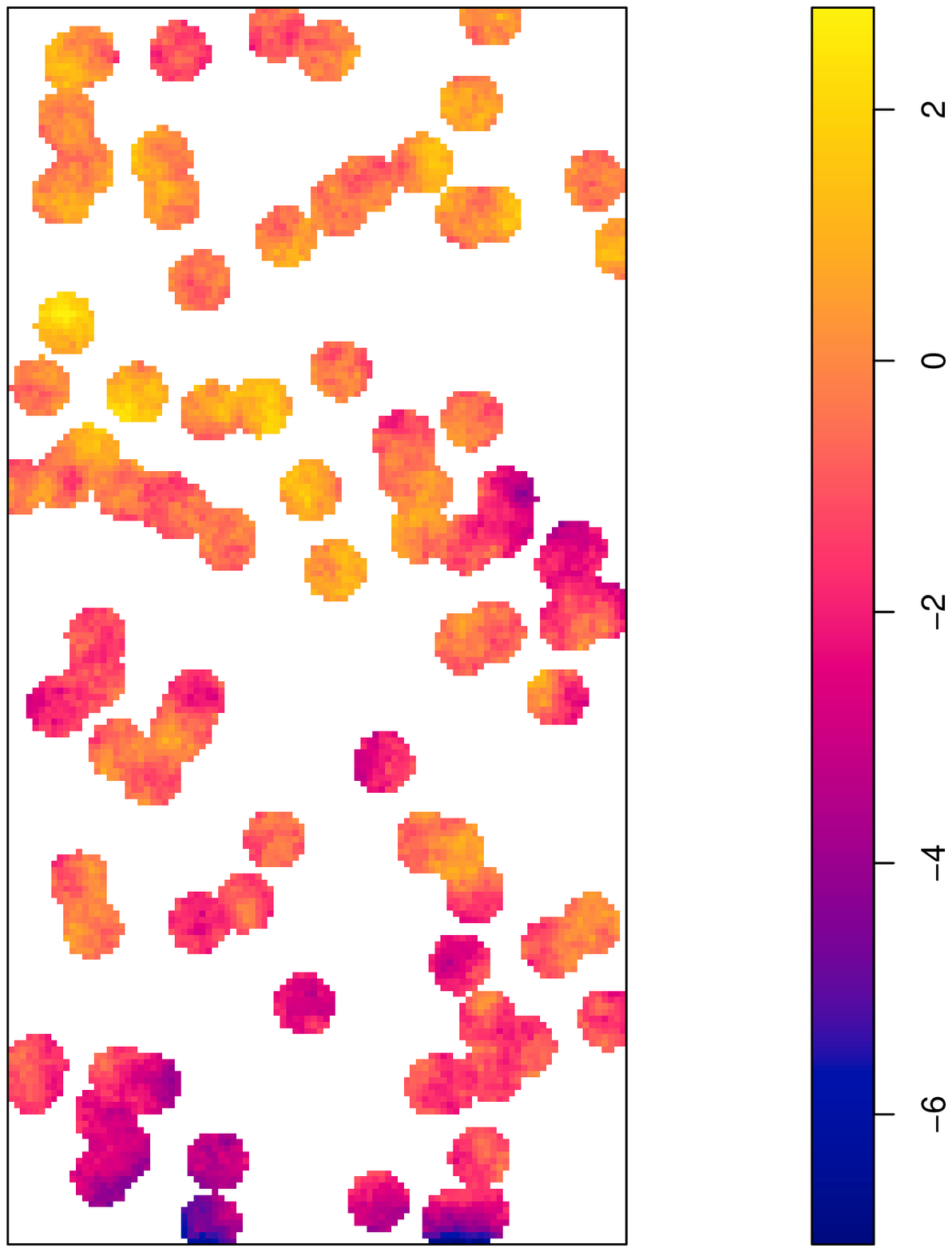}
\hspace{0.4cm}
\epsfxsize=0.4\hsize
\epsffile{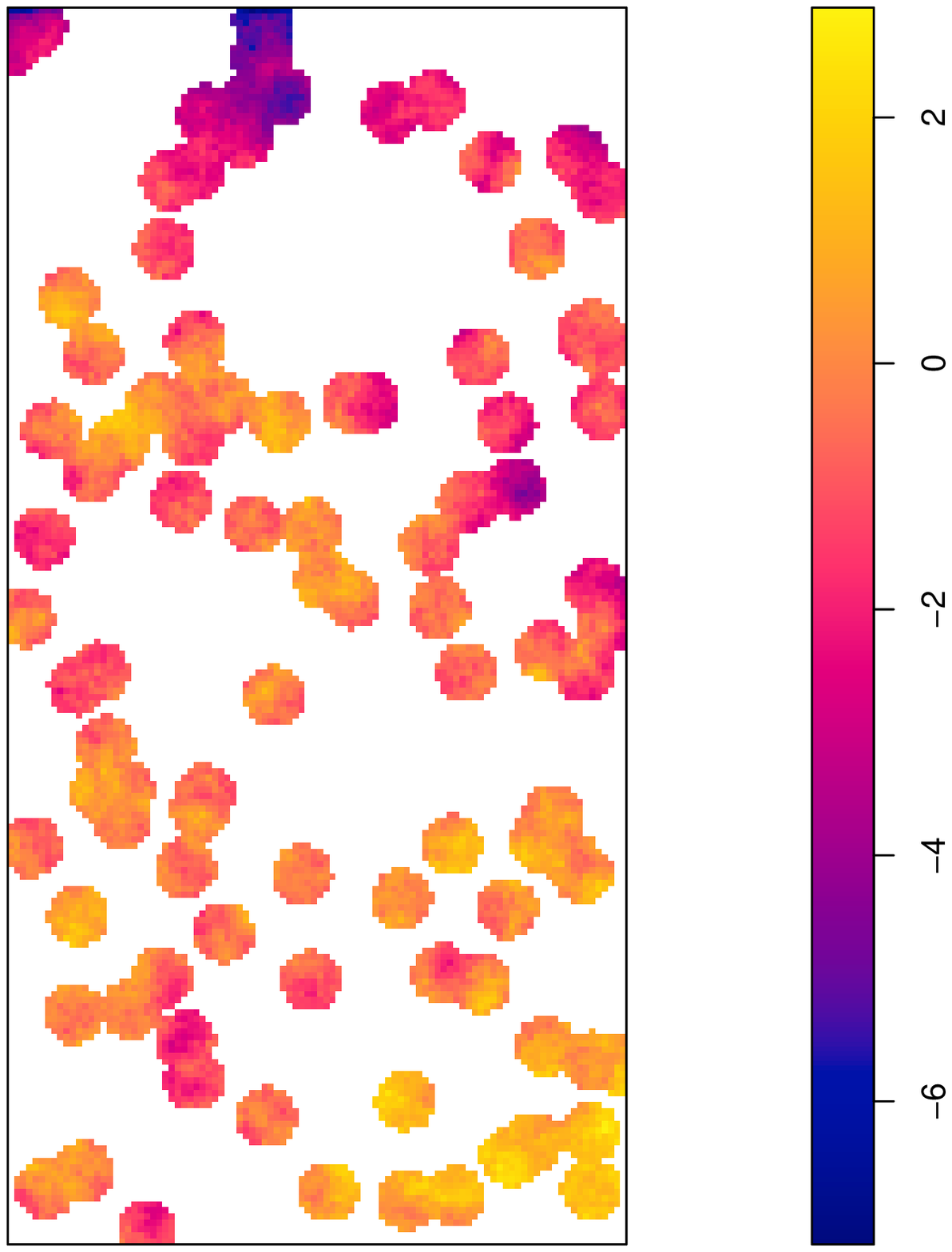}}
\end{center}
\caption{Images of realisations $(\log \psi_1, \log \psi_2)$ of 
a random thinning field model $(\Psi_1, \Psi_2)$ on $W=[0,10]\times [0,20]$ 
with $r_1(x,y) = y/20$, $\log \Gamma_0$ a mean zero Gaussian random field
with covariance function $\sigma^2 \exp[ - \beta || \cdot || ]$ for 
$\beta = 0.8$ and $\sigma^2 = 1$, and $X$ as in Figure~\ref{F:Boolstat}.}
\label{F:BoolThin}
\end{figure}

\clearpage
\section*{Appendix}

\begin{res}
\label{l:old}
Let $\alpha$ be an $\R^+$-valued random variable with finite 
second moment. Then
\[
 \left( \EE\left[ \alpha \exp\left[-\alpha t^d \right]\right] \right)^2 
\leq 
 \EE\left[ \alpha^2 \exp\left[-\alpha t^d\right] \right] \;
 \EE \exp\left[-\alpha t^d\right].
\]
\end{res}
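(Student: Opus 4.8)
The plan is to recognise the stated inequality as an instance of the Cauchy--Schwarz inequality in $L^2(\Omega, \cA, \PP)$. Since $t\geq 0$ and $\alpha\geq 0$ almost surely, the factor $\exp[-\alpha t^d]$ lies in $[0,1]$, so it admits a square root and I may split it symmetrically. Concretely, I would set
\[
f = \alpha \, \exp\left[ -\tfrac{1}{2} \alpha t^d \right], \qquad
g = \exp\left[ -\tfrac{1}{2} \alpha t^d \right],
\]
both of which are non-negative random variables. With this choice the three expectations appearing in the claim are precisely $\EE[fg] = \EE[\alpha \exp(-\alpha t^d)]$, $\EE[f^2] = \EE[\alpha^2 \exp(-\alpha t^d)]$, and $\EE[g^2] = \EE[\exp(-\alpha t^d)]$.

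Before invoking Cauchy--Schwarz I would check that $f, g \in L^2$. Because $\exp[-\alpha t^d] \leq 1$, one has $\EE[g^2] \leq 1$ and $\EE[f^2] = \EE[\alpha^2 \exp(-\alpha t^d)] \leq \EE[\alpha^2] < \infty$ by the finite second moment assumption; the finiteness of the first moment, needed to make $\EE[fg]$ meaningful, follows from that of the second. Hence all quantities are well defined and finite.

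The conclusion is then immediate: the Cauchy--Schwarz inequality $(\EE[fg])^2 \leq \EE[f^2]\, \EE[g^2]$ is exactly the asserted bound. I do not anticipate a genuine obstacle here; the only point requiring care is the integrability check above, which is why the finite-second-moment hypothesis is imposed. An equivalent route, should one prefer to avoid introducing $f$ and $g$ explicitly, is to tilt $\PP$ by the probability density proportional to $\exp[-\alpha t^d]$ and observe that the claim is just the non-negativity of the variance of $\alpha$ under this tilted measure; this yields the same inequality after clearing denominators.
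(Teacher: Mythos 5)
Your proof is correct. The splitting $f = \alpha\,e^{-\alpha t^d/2}$, $g = e^{-\alpha t^d/2}$ does give $\EE[fg]$, $\EE[f^2]$, $\EE[g^2]$ equal to the three expectations in the statement, the integrability check via $e^{-\alpha t^d}\le 1$ and $\EE[\alpha^2]<\infty$ is the right one, and Cauchy--Schwarz then yields the claim; your tilted-measure reformulation (non-negativity of the variance of $\alpha$ under the measure with density proportional to $e^{-\alpha t^d}$) is an equivalent and equally valid way to phrase it. The paper, however, proceeds differently: it does not invoke Cauchy--Schwarz as a known result, but gives a self-contained symmetrization argument. It introduces independent copies $X$ and $Y$ of $\alpha$, rewrites the asserted inequality as $\EE\left[ X(Y-X)e^{-(X+Y)t^d}\right] \le 0$, splits this expectation over the events $\{Y>X\}$ and $\{X>Y\}$, and, after swapping the roles of $X$ and $Y$ in the second term, collapses the sum to $-\EE\left[ (Y-X)^2 e^{-(X+Y)t^d} 1\{Y>X\}\right] \le 0$. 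The two arguments are close cousins --- the independent-copy trick is itself one standard way to prove Cauchy--Schwarz --- but yours is shorter and leans on a classical inequality, while the paper's is elementary and self-contained, making visible that the defect is exactly the expectation of a non-negative quantity $(Y-X)^2 e^{-(X+Y)t^d}$; that style of two-copy argument is also the one that underlies the quadrant-dependence (correlation inequality) reasoning used elsewhere in the paper.
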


\begin{proof}
Let $X$ and $Y$ be independent random variables with the
same distribution as $\alpha$. Then we need to show that
\begin{equation}
\label{E:mp}
 \EE\left[ X(Y-X) e^{-(X+Y) t^d} \right] \leq 0.
\end{equation}
The expectation (\ref{E:mp}) equals
\[
 \EE\left[ X(Y-X) e^{-(X+Y)t^d} 1\{Y>X\} \right]  +
 \EE\left[ X(Y-X) e^{-(X+Y)t^d} 1\{X>Y \} \right] + 0 
\]
\[
  =  
 \EE\left[ X(Y-X) e^{-(X+Y) t^d} 1\{ Y>X \} \right]  +
 \EE\left[ Y(X-Y) e^{-(Y+X) t^d} 1 \{Y>X\} \right]  
\]
\[
  =  
 \EE\left[ -(X^2+Y^2-2XY) e^{-(X+Y) t^d} 1\{Y>X\} \right]  
\]
\[
  =  
 - \EE\left[ (Y-X)^2 e^{-(X+Y) t^d} 1\{Y>X\} \right]  \leq 0.
\]
\end{proof}

For compound measures, by Theorem~\ref{t:compound},
the derivative of $J_{12}$ is 
\[
  J_{12}'(t) =  d \kappa_d t^{d-1}  J_{12}(t) \left[
     \frac{ \EE\left[ \Lambda_2 e^{-\Lambda_2 \kappa_d t^d / \EE\Lambda_2} \right]}{
         \EE\Lambda_2 \,   \EE\left[ e^{-\Lambda_2 \kappa_d t^d / \EE\Lambda_2} \right]} 
     -  
     \frac{ \EE\left[ \Lambda_1 \Lambda_2  e^{-\Lambda_2 \kappa_d t^d / \EE\Lambda_2} 
    \right]}{  \EE\Lambda_2 \, \EE\left[ \Lambda_1
       e^{-\Lambda_2 \kappa_d t^d / \EE\Lambda_2} \right]}  \right] .
\]

Upon plugging in the model assumptions and applying Lemma~\ref{l:old},
one derives that for the linked model, $J_{12}(t)$ is monotonically
non-increasing; in the balanced case, $J_{12}(t)$ is non-decreasing. 
For example for the balanced case, 
\[
   \EE\left[ \Lambda_2 e^{-\Lambda_2 \kappa_d t^d / \EE\Lambda_2} \right]
 \EE\left[ \Lambda_1  e^{-\Lambda_2 \kappa_d t^d / \EE\Lambda_2} \right]
- 
  \EE\left[ e^{-\Lambda_2 \kappa_d t^d / \EE\Lambda_2} \right]
 \EE\left[ \Lambda_1 \Lambda_2  e^{-\Lambda_2 \kappa_d t^d / \EE\Lambda_2} \right] 
\]
\[
= \EE\left[ \Lambda_2^2 e^{-\Lambda_2 \kappa_d t^d / \EE\Lambda_2} \right]
\EE\left[ e^{-\Lambda_2 \kappa_d t^d / \EE\Lambda_2} \right] 
-
\left(\EE\left[ \Lambda_2 e^{-\Lambda_2 \kappa_d t^d / \EE\Lambda_2} \right] \right)^2 
\]
and setting $\alpha = \Lambda_2 \kappa_d/\EE\Lambda_2$ yields the
desired result.

\end{document}